\numberwithin{equation}{section}
\theoremstyle{definition}
\newtheorem{theorem}{Theorem}[section]
\newtheorem{definition}[theorem]{Definition}
\newtheorem{lemma}[theorem]{Lemma}
\newtheorem{proposition}[theorem]{Proposition}
\newtheorem{corollary}[theorem]{Corollary}
\newtheorem*{theorem*}{Theorem}
\newtheorem*{TT1}{Theorem~\ref{thm:gmobiledetdone}}
\newtheorem*{TT2}{Theorem~\ref{thm:qmajgmobiledetdone}}
\theoremstyle{remark}
\newtheorem{remark}[theorem]{Remark}
\newtheorem{example}[theorem]{Example}
\newcommand\qbin[3]{\left[\begin{matrix} #1 \\ #2 \end{matrix} \right]_{#3}}
\newcommand\txtqbin[3]{\left[\begin{smallmatrix} #1 \\ #2 \end{smallmatrix} \right]_{#3}}
\newcommand{\stat}[0]{\text{stat}}
\newcommand{\maj}[0]{\text{maj}}
\newcommand{\inv}[0]{\text{inv}}
\newcommand{\Des}[0]{\text{Des}}
\DeclareMathOperator{\op}{op}
\newcommand{\cL}{\mathscr{L}}
\newcommand{\cP}{\mathcal{P}}
\newcommand{\cQ}{\mathcal{Q}}
\newcommand{\cT}{\mathcal{T}}
\newcommand{\cZ}{\mathcal{Z}}
\newcommand{\cR}{\mathcal{R}}
\newcommand{\co}{\colon}
\title{Counting linear extensions of posets with determinants of hook lengths}
\author{Alexander Garver}
\address{Department of Mathematics, University of Michigan, Ann Arbor, MI.}
\email{alexander.garver@gmail.com}
\author{Stefan Grosser}
\address{Department of Mathematics and Statistics, McGill University, Montreal, QC.}
\email{stefan.grosser@mail.mcgill.ca}
\author{Jacob P. Matherne}
\address{Department of Mathematics, University of Oregon, Eugene, OR, and Max-Planck-Institut f\"{u}r Mathematik, Bonn, Germany.}
\email{matherne@uoregon.edu}
\author{Alejandro H. Morales}
\address{Department of Mathematics and Statistics, University of Massachusetts, Amherst, MA.}
\email{ahmorales@math.umass.edu}
\thanks{Alexander Garver received support from NSERC grant RGPIN/05999-2014 and the Canada Research Chairs program. Jacob Matherne received support from NSF Grant DMS-1638352, the Association of Members of the Institute for Advanced Study, and the Max Planck Institute for Mathematics in Bonn. Alejandro Morales received support from NSF Grant DMS-1855536.}
\begin{document}

\begin{abstract}
We introduce a class of posets, which includes both ribbon posets (skew shapes) and $d$-complete posets, such that their number of linear extensions is given by a determinant of a matrix whose entries are products of hook lengths. We also give $q$-analogues of this determinantal formula in terms of the major index and inversion statistics.  As applications, we give families of tree posets whose numbers of linear extensions are given by generalizations of Euler numbers, we draw relations to Naruse--Okada's positive formulas for the number of linear extensions of skew $d$-complete posets, and we give polynomiality results analogous to those of descent polynomials by  Diaz-L\'opez, Harris, Insko, Omar, and Sagan.
\end{abstract}

\maketitle

\section{Introduction}

Linear extensions of posets are fundamental objects in combinatorics and computer science. The number of linear extensions  of a poset $\cP$, denoted by $e(\cP)$, is a measure of the complexity of the poset. However, computing $e(\cP)$ is a difficult problem---it is $\#P$-complete \cite{bw91}, even for posets with restricted height or dimension \cite{dp18}. Fortunately, for some posets that appear in algebraic and enumerative combinatorics, their number of linear extensions can be efficiently computed through product formulas (posets arising from Young diagrams \cite{frame1954hook}, rooted tree posets \cite{knuth_1998}, $d$-complete posets \cite{ProHLF}),
determinants (posets arising from skew Young diagrams \cite{aitken1943xxvi}), or recursive algorithms (series parallel posets \cite{mohring1989computationally}, tree posets \cite{atk}). 

The goal of this paper is to introduce and study a family of posets, called mobile posets, that is a common refinement of both ribbon posets (skew shapes) and $d$-complete posets.  We introduce a folding algorithm on such posets that allows us to compute their number of linear extensions via the determinant of a matrix whose entries are products of hook lengths (Theorem~\ref{thm:gmobiledetdone}).  Moreover, we obtain a $q$-analogue of this determinantal formula (Theorem~\ref{thm:qmajgmobiledetdone}).  These determinantal formulas simultaneously specialize to known formulas for both ribbon posets and $d$-complete posets in the literature.

\subsection{Hook-length formulas and determinantal formulas for \texorpdfstring{$e(\cP)$}{e(P)}}

The family of $d$-complete posets defined by Proctor \cites{ProClass,p99,ProHLF} includes rooted tree posets (posets whose Hasse diagram is a rooted tree) and posets from Young diagrams of (shifted) partitions. Proctor showed that these posets have a product formula involving {\em hook lengths} for their number of linear extensions:
\begin{equation}\label{eqn:introprod}
e(\cP) \,=\, \frac{n!}{\prod_{x \in \cP} h_{\cP}(x)},
\end{equation}
where $n$ is the number of elements of $\cP$, and $h_{\cP}(x)$ is the size of the {\em hook} of $x$, certain elements smaller than or equal to $x$ in $\cP$ (see Section~\ref{section:d-complete}). This formula generalizes the hook-length formula for $e(\cP)$ of a rooted tree due to Knuth \cite{knuth_1998}, as well as the hook-length formula for the number of standard tableaux of a (shifted) partition due to Frame--Robinson--Thrall \cites{Thrall, frame1954hook}. 

Tree posets are a natural generalization of rooted tree posets. The number of linear extensions of certain tree posets are of interest in enumerative and algebraic combinatorics. For example, given a set $S = \{s_1,\ldots,s_k\} \subset \{1,\ldots,n-1\}$ with $s_1<\cdots < s_k$, the number of permutations of size $n$ with descent set $S$ equals $e(\cZ)$, where $\cZ$ is a path with $n$ elements, called a {\em ribbon}, with down steps indexed by $S$.  Ribbons are also examples of posets arising from skew Young diagrams. Counting permutations with a given descent set is an important problem in combinatorics with a rich history including work by MacMahon \cite{CA}, Foulkes \cite{foulkes}, and Gessel--Reutenauer \cite{gessel1993counting}, and more recently by  Diaz-Lopez, Harris, Insko, Omar, and Sagan \cite{diaz2019descent}. Either by an inclusion-exclusion argument or from the Jacobi--Trudi identity for linear extensions of skew Young diagrams, there is a determinantal formula
\begin{equation} \label{eq:introdetdes}
e(\cZ) = \#\{w \in \mathfrak{S}_n \mid \Des(w) = S\} = n! \det\left(\frac{1}{(s_{j+1}-s_i)!}\right)_{0\leq i,j\leq k},
\end{equation}
where $s_0=0$ and $s_{k+1}=n$. 

In addition, more recently, linear extensions of tree posets have appeared in the context of noncrossing partitions and quiver representation theory \cite{garvermatherne}, as well as in pattern avoidance \cite{dwyerelizalde}.

The main result of this paper is to give a determinantal formula for the number of linear extensions of {\em mobile posets}, a class of posets which includes both ribbons and $d$-complete posets.

A mobile poset\footnote{The name ``mobile'' was chosen for the poset's resemblance to mobiles for babies and to the kinetic sculptures of Alexander Calder.} $\cP$ is a poset obtained from a ribbon poset $\cZ$ by allowing every element $z$ in $\cZ$ to cover the maximal element of a nonnegative number of disjoint $d$-complete posets, and by letting at most one element $z'$ of $\cZ$ be covered by a certain element of a $d$-complete poset (see Figure~\ref{fig:mobile tree poset}: Left). If the $d$-complete posets in this description are restricted to rooted tree posets, then the posets in the resulting family are called {\em mobile tree posets} (see Figure~\ref{fig:mobile tree poset}: Right). 

The determinantal formula for mobile posets below is a common refinement to the formulas for $e(\cP)$ of Proctor  and MacMahon for $d$-complete posets and ribbons, respectively.

\begin{TT1} 
Let $\cP$ be a mobile poset with $n$ elements and $F$ be the set of path folds for $\cP$ (see \eqref{eq: folds mobile tree poset}). Then 
\begin{equation}
e(\cP) \,=\, n!\cdot\det(M_{i,j})_{0 \leq i,j\leq k}, \quad \text{for} \quad 
M_{i,j} := \begin{cases}0 & \text{if } j<i-1,\\
1 & \text{if } j = i-1,\\
1/\prod_{x \in \cP_{i,j}} h_{\cP_{i,j}}(x) & \text{otherwise},
\end{cases}
\end{equation}
where $k$ is the size of $F$ and $\cP_{i,j}$ are certain $d$-complete posets (see Definition \ref{def: component array}).
\end{TT1}

\begin{figure}
\begin{center}
     \includegraphics[scale=0.7]{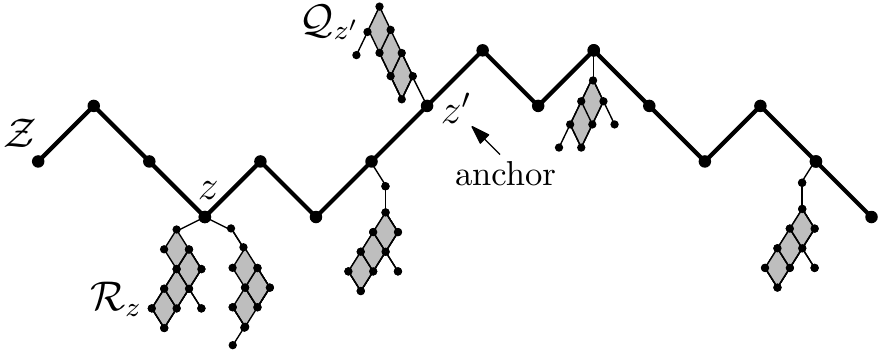}
     \qquad
     \includegraphics[scale=0.7]{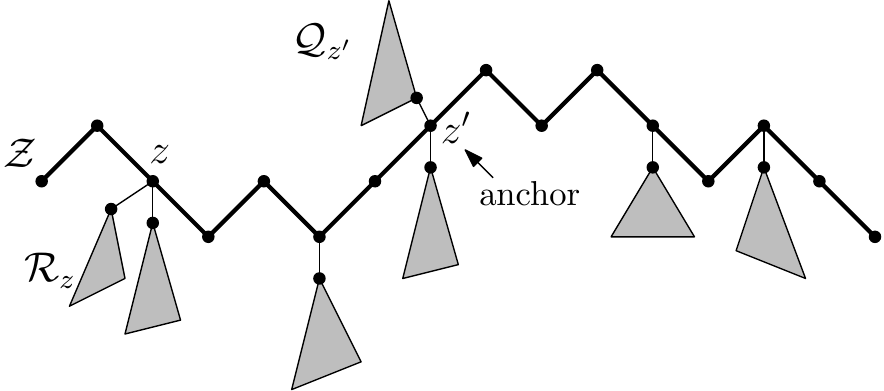}
\end{center}
\caption{Left: schematic of a mobile poset. Right: schematic of a mobile tree poset. The shaded rhombi depict $d$-complete posets, and the shaded triangles depict rooted tree posets.}
\label{fig:mobile tree poset}
\end{figure}

\subsection{\texorpdfstring{$q$}{q}-analogues}

Two well-studied $q$-analogues of the number of linear extensions for a labeled poset $(\cP,\omega)$ are the major index and inversion $q$-analogues:
\[
e_q^{\maj}(\cP,\omega):=\sum_{\sigma} q^{\maj(\sigma)} \qquad \text{and} \qquad e_q^{\inv}(\cP,\omega):=\sum_{\sigma} q^{\inv(\sigma)},
\]
where both sums are over linear extensions $\sigma$ of $(\cP,\omega)$ (see Section~\ref{section:qanalogues}), and $\maj(\sigma)$ and $\inv(\sigma)$ are the major index and number of inversions of $\sigma$, respectively. 
Bj\"orner--Wachs \cite{bw91} gave both major index and inversion $q$-analogues of \eqref{eqn:introprod} for rooted trees, and Stanley \cite{St71p2} gave a major index $q$-analogue of \eqref{eqn:introprod} for posets from Young diagrams of partitions that was generalized to $d$-complete posets by Proctor \cites{ProClass,p99}. For ribbon posets $\cZ$, a major index $q$-analogue of \eqref{eq:introdetdes} comes from the Jacobi--Trudi formula, and an inversion $q$-analogue is due to Stanley \cite[Corollary 3.2]{Stanqdes}. 

We give a $q$-analogue of Theorem~\ref{thm:gmobiledetdone} for mobile posets that is a common refinement to the respective major index $q$-analogues for $d$-complete posets and ribbons.

\begin{TT2}
Let $(\cP,\omega)$ be a labeled mobile poset with $n$ elements, and let $F$ be the set of path folds for $\cP$ (see \eqref{eq: folds mobile tree poset}). Then
\begin{equation}
e_q^{\maj}(\cP,\omega)
 \,=\, [n]_q!\cdot\det(M_{i,j})_{0 \leq i,j\leq k}, \quad \text{for} \quad 
M_{i,j} := \begin{cases}0 & \text{if } j<i-1,\\
1 & \text{if } j = i-1,\\
\frac{q^{\maj(\cP_{i,j},\omega_{i,j})}}{\prod_{x \in \cP_{i,j}} [h_{\cP_{i,j}}(x)]_q} & \text{otherwise},
\end{cases}
\end{equation}
where $k$ is the size of $F$ and $(\cP_{i,j},\omega_{i,j})$ are certain labeled $d$-complete posets (see Definition~\ref{def: component array}). 
\end{TT2}

We also give a determinantal formula for the inversion $q$-analogue of the number of linear extensions for mobile tree posets (Theorem~\ref{thm:qmobiledetdone}). This result is a common refinement of results of Bj\"orner--Wachs and Stanley on rooted tree posets and ribbons. 

To obtain our determinantal formulas, we carefully study the effect that folding cover relations in $\cP$ has on $e(\cP)$ via inclusion-exclusion and use a lemma by Stanley \cite[Example 2.2.4]{EC1} to turn certain inclusion-exclusion expressions into determinants. We are able to obtain hook-length formulas for the entries of the determinant by using properties of $d$-complete posets developed by Proctor in \cite{ProClass}. 

\subsection*{Outline}

The rest of the paper is organized as follows. In Section~\ref{section:prelim} we give definitions, notation, and background results on linear extensions, the previously known poset families mentioned above, and $q$-analogues of $e(\cP)$. Section~\ref{sec:folding} gives the inclusion-exclusion approach to compute $e(\cP)$ by folding certain cover relations of the posets and determines when the resulting formulas can be written as determinants. In
Section~\ref{section:mobile} we recover the determinant formula \eqref{eq:introdetdes} for $e(\cZ)$ using this approach, and we introduce the family of mobile posets and prove the main result (Theorem~\ref{thm:gmobiledetdone}). This section also gives applications of our main result to the simpler class of mobile tree posets. These posets are exactly the tree posets where our methods apply (see Theorem~\ref{cor:mobile characterization} for a precise statement). Section~\ref{ex:eulerext} has two infinite families of mobile tree posets whose number of linear extensions generalize {\em Euler numbers}. In Section~\ref{section:q} we prove the $q$-analogues of our main result (Theorems~\ref{thm:qmajgmobiledetdone} and \ref{thm:qmobiledetdone}). We conclude in Section~\ref{sec:final_rem} with results and open questions related to this work on (i) positive formulas for the number of linear extensions of mobile posets related to the Naruse--Okada formula \cite{NO} for {\em skew $d$-complete posets}, (ii) polynomiality for the number of linear extensions of mobile tree posets, and (iii) a $q$-analogue of Atkinson's algorithm (Lemma~\ref{prop:qatk}) for the number of linear extensions of an arbitrary tree poset.

\subsection*{Acknowledgments}

We are grateful to Igor Pak and Bruce Sagan whose comments and questions motivated Sections~\ref{ex:eulerext} and \ref{subsec:desc_poly}, respectively. We are also thankful to David Barrington, Sergi Elizalde, Jang Soo Kim, Peter Winkler, and Benjamin Young for helpful discussions and suggestions that led to improvements in this paper.

\section{Preliminaries}
\label{section:prelim}

For nonnegative integers $i\leq m$, we use $[i,m]$ to denote the set $\{i,i+1,\ldots,m\}$ and $[m]$ to denote the set $[1,m]$. 

\subsection{Posets and linear extensions}
\label{section:posets}

A {\em partially-ordered set} (poset) is a pair $(\cP, \le_\cP)$, with $\cP$ a finite set and $\le_\cP$ a binary relation on $\cP$ that is reflexive, antisymmetric, and transitive. We denote a poset by its underlying set when the order relation is clear from context. Throughout, we view $\le_\cP$ as both a subset of $\cP^2$ and as a way to compare two elements of $\cP$, depending on context.  (Thus, writing $(x,y) \in\ \le_{\cP}$ is equivalent to writing $x \le_{\cP} y$.)  We denote the set of cover relations of $\cP$ by $\lessdot_\cP$. An {\em (induced) subposet} $\cQ$ of $\cP$ is a poset whose underlying set is a subset of the elements of $\cP$, and whose relations are given by $s \leq_{\cQ} t$ if and only if $s \leq_{\cP} t$.  Given two elements $x,y \in \cP$, the {\em interval} $[x,y]$ is the subposet $\{z \in \cP \mid x\leq_{\cP} z \leq_{\cP} y\}$.

Throughout, we represent posets by their {\em Hasse diagrams}, and we say $\cP$ is {\em connected} if its Hasse diagram is connected. A {\em chain} (resp. {\em antichain}) is a poset where any two elements are comparable (resp. {\em incomparable}).

If $\cP$ and $\cQ$ are two posets,
we define their {\em disjoint sum} $\cP + \cQ$ as the poset with underlying set the disjoint union $\cP \sqcup \cQ$ and with relations the disjoint union $\le_\cP \sqcup \le_\cQ$.  If $E \subset \cP$, we denote by $\cP \setminus E$ the poset with underlying set $\cP \setminus E$ and with relations $\le_{\cP \setminus E}\ :=\ \le_\cP \setminus\ \{(x,y) \in\ \le_\cP\ \mid x \in E \text{ or } y \in E\}$.
Given a poset $\cP$ with two incomparable elements $x$ and $y$, let $\cP \oplus \{(x,y)\}$ be the poset obtained by adding the cover relation $(x,y)$ and taking the transitive closure. 

\begin{definition}\label{def:slantsum}
Let $\cP$ and $\cQ$ be disjoint posets, let $p$ be an element in $\cP$, and let $q$ be an element in $\cQ$. The {\em slant sum} of $\cP$ and $\cQ$ at $p$ and $q$ is the poset $\cP {}^p\backslash_q \cQ := (\cP + \cQ) \oplus \{(q,p)\}$.
\end{definition}

\begin{definition}\label{def:iterated_slantsum}
Let $\cP,\cQ_1,\dots,\cQ_m$ be disjoint posets, let $p$ be an element in $\cP$, and let $q_i$ be an element in $\cQ_i$ for $i=1,\dots,m$. The {\em iterated slant sum} of $\cP,\cQ_1,\dots,\cQ_m$ at $p$ and $q_1,\dots,q_m$ is the poset $\cP \underset{i=1,\ldots,m}{\prescript{p}{}{\big{\backslash}_{q_i}}} \cQ_i := \left(\cP + \cQ_1 +\cdots + \cQ_m\right) \oplus \{(q_1,p),\dots,(q_m,p)\}$.
\end{definition}

We now introduce the main object of study in this paper: linear extensions of posets.

\begin{definition}
A {\em linear extension} of an $n$-element poset $\cP$ is a bijection $f\co \cP \to [n]$ that is order-preserving; that is, if $x \le_\cP y$, then $f(x) \le f(y)$.  We denote by $\cL(\cP)$ the set of all linear extensions of $\cP$, by $e(\cP):=\#\cL(\cP)$ the number of linear extensions of $\cP$, and by $\overline{e}(\cP):= e(\cP)/n!$ the probability that a permutation of the elements of $\cP$ is a linear extension of $\cP$.  
\end{definition}

The following standard result gives a formula for the number of linear extensions of a disjoint sum in terms of the number of linear extensions for the summands.

\begin{proposition}[{\cite[Example 3.5.4]{EC1}}]\label{prop:disjointsum}
The number of linear extensions of a disjoint sum of posets $\cP_i$, each with $n_i$ elements, is
\[
e(\cP_1 + \cdots + \cP_k) = \binom{n_1 + \cdots + n_k}{n_1,\ldots,n_k}e(\cP_1)\cdots e(\cP_k).
\]
\end{proposition}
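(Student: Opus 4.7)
The plan is to show that a linear extension of $\cP_1 + \cdots + \cP_k$ is determined by two pieces of data: (1) a choice of which values in $[n]$ are used to label each summand, and (2) a choice of a linear extension of each summand. Since the only order relations in $\cP_1 + \cdots + \cP_k$ are those inherited from the individual $\cP_i$ (there are no relations between elements of distinct summands), this decomposition will give exactly the claimed product.

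More precisely, let $n = n_1 + \cdots + n_k$, and let $f \co \cP_1 + \cdots + \cP_k \to [n]$ be a linear extension. First I would set $A_i := f(\cP_i) \subseteq [n]$, so that $(A_1,\dots,A_k)$ is an ordered set partition of $[n]$ with $|A_i|=n_i$; the number of such ordered set partitions is exactly $\binom{n}{n_1,\dots,n_k}$. Next, for each $i$, let $g_i \co A_i \to [n_i]$ be the unique order-preserving bijection onto $[n_i]$. Then I would observe that $g_i \circ f|_{\cP_i} \co \cP_i \to [n_i]$ is a linear extension of $\cP_i$, and that the map $f \mapsto \bigl((A_1,\dots,A_k),(g_1\circ f|_{\cP_1},\dots,g_k\circ f|_{\cP_k})\bigr)$ is a bijection between $\cL(\cP_1 + \cdots + \cP_k)$ and the Cartesian product of ordered set partitions of $[n]$ of type $(n_1,\dots,n_k)$ with $\cL(\cP_1)\times\cdots\times\cL(\cP_k)$. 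Counting both sides yields the formula.

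The inverse map, which I would construct to verify bijectivity, sends a tuple $\bigl((A_1,\dots,A_k),(\sigma_1,\dots,\sigma_k)\bigr)$ to the map $f$ defined on each $\cP_i$ by $f|_{\cP_i} := g_i^{-1} \circ \sigma_i$; one then checks that $f$ is order-preserving, using that relations in $\cP_1 + \cdots + \cP_k$ reduce to relations within a single summand. Alternatively, one could prove the result by induction on $k$, where the inductive step reduces to the $k=2$ case, but I find the direct decomposition cleaner.

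There is no real obstacle here; the only subtle point is recognizing that an order-preserving bijection from $\cP_i$ onto an arbitrary $n_i$-subset $A_i \subseteq [n]$ is the same data as a linear extension of $\cP_i$, which holds because any $n_i$-element totally ordered subset of $[n]$ is canonically order-isomorphic to $[n_i]$ via the unique increasing bijection $g_i$. Given this identification, the enumeration splits cleanly into the product $\binom{n}{n_1,\dots,n_k}\,e(\cP_1)\cdots e(\cP_k)$.
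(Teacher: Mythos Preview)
Your argument is correct: the bijection you describe between $\cL(\cP_1+\cdots+\cP_k)$ and ordered set partitions of $[n]$ of type $(n_1,\ldots,n_k)$ together with tuples in $\cL(\cP_1)\times\cdots\times\cL(\cP_k)$ is exactly the standard proof. The paper does not give its own proof of this proposition---it simply cites \cite[Example~3.5.4]{EC1} as a known result---so there is nothing to compare against beyond noting that your argument is the conventional one.
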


\subsection{Families of posets with closed formulas for \texorpdfstring{$e(\cP)$}{e(P)}}
\label{section:closedformulas}

We recall some classes of posets for which the number of linear extensions has a closed formula. 

\subsubsection{Rooted trees}
\label{section:rootedtrees}

A {\em tree poset} is a poset with a connected acyclic Hasse diagram, and a {\em rooted tree poset} is a tree poset with exactly one maximal element (the {\em root}).  Rooted tree posets have a particularly nice product formula, due to Knuth, for the number of their linear extensions.

\begin{theorem}[{Knuth \cite[Theorem 5.1.4H]{knuth_1998}}]\label{prop:rootedtree}
The number of linear extensions of a rooted tree poset $\cP$ with $n$ elements is
\[
e(\cP) = \frac{n!}{\prod_{x \in \cP}h_{\cP}(x)},
\]
where $h_{\cP}(x) := \#\{y \in \cP \mid y \le_{\cP} x\}$ is the {\em hook length} of $x$ in $\cP$.
\end{theorem}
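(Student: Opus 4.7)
The plan is to prove this by induction on $n$, the number of elements of $\cP$. The base case $n=1$ is trivial: the single element has hook length $1$, and there is exactly one linear extension.

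For the inductive step, let $r$ be the unique maximal element (the root) of $\cP$. Any linear extension $f$ of $\cP$ must satisfy $f(r) = n$, so the remaining values $1,\ldots,n-1$ are assigned via a linear extension of the subposet $\cP \setminus \{r\}$. Because $\cP$ is a rooted tree, removing $r$ yields a disjoint sum $\cP_1 + \cdots + \cP_k$, where $\cP_1,\ldots,\cP_k$ are the rooted subtrees hanging off the children of $r$, with $|\cP_i| = n_i$ and $n_1 + \cdots + n_k = n-1$. Therefore $e(\cP) = e(\cP_1 + \cdots + \cP_k)$, and applying Proposition~\ref{prop:disjointsum} gives
\[
e(\cP) \,=\, \binom{n-1}{n_1,\ldots,n_k}\,e(\cP_1)\cdots e(\cP_k).
\]

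The crucial structural observation, which is specific to rooted trees, is that for any $x \in \cP_i$, the principal order ideal below $x$ is entirely contained in $\cP_i$; hence $h_{\cP}(x) = h_{\cP_i}(x)$. Combined with the fact that $h_\cP(r) = n$, this yields
\[
\prod_{x \in \cP} h_\cP(x) \,=\, n \cdot \prod_{i=1}^k \prod_{x \in \cP_i} h_{\cP_i}(x).
\]
Now apply the inductive hypothesis $e(\cP_i) = n_i!/\prod_{x\in\cP_i} h_{\cP_i}(x)$ and substitute into the previous display:
\[
e(\cP) \,=\, \frac{(n-1)!}{n_1!\cdots n_k!} \prod_{i=1}^k \frac{n_i!}{\prod_{x \in \cP_i} h_{\cP_i}(x)} \,=\, \frac{(n-1)!}{\prod_{i=1}^k \prod_{x \in \cP_i} h_{\cP_i}(x)} \,=\, \frac{n!}{\prod_{x \in \cP} h_\cP(x)},
\]
where the last equality uses $h_\cP(r) = n$ to absorb the factor of $n$.

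There is no real obstacle in this argument; the only point requiring care is the identity $h_\cP(x) = h_{\cP_i}(x)$ for $x \in \cP_i$, which fails for general posets (e.g., if $\cP$ had multiple maximal elements sharing descendants). The rooted-tree hypothesis is precisely what guarantees that the order ideal below any non-root element is unaffected by deletion of the root, making the induction go through cleanly.
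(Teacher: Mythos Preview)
Your proof is correct: the induction on $n$, removal of the root, application of Proposition~\ref{prop:disjointsum} to the resulting disjoint sum of subtrees, and the hook-length bookkeeping all go through exactly as you wrote. The key observation that $h_{\cP}(x)=h_{\cP_i}(x)$ for $x\in\cP_i$ is precisely where the rooted-tree hypothesis is used, and you identified this clearly.

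As for comparison with the paper: the paper does not supply its own proof of this statement. Theorem~\ref{prop:rootedtree} is quoted as a known result due to Knuth, with a citation to \cite[Theorem 5.1.4H]{knuth_1998}, and is used as a black box in later arguments (in particular as the $d$-complete hook-length formula specialized to rooted trees, and in the $q$-analogues of Section~\ref{section:q}). Your argument is the standard one and would serve perfectly well if a self-contained proof were desired.
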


\subsubsection{Ribbons}
\label{section:ribbons}

Let $S = \{s_1,\ldots,s_k\}_{<} \subset [n-1]$, where the subscript $<$ indicates that $s_1<\cdots < s_k$.  A {\em ribbon poset} $\cZ^{(n)}_S$ with descent set $S$ is the tree poset with underlying set $\{z_1,\ldots,z_n\}$ whose cover relations are  $z_{i+1} \lessdot z_i$ if $i \in S$ and $z_i \lessdot z_{i+1}$ if $i \not\in S$.  These posets coincide with the cell posets of ribbon skew shapes, and their linear extensions naturally correspond to permutations of size $n$ with descent set $S$. Ribbon posets have a determinantal formula for their number of linear extensions due to MacMahon \cite[1.55, vol 1]{CA}, and this identity is a special case of the Jacobi--Trudi determinantal identity counting standard Young tableaux of a skew shape (see \cite[Section 7.16]{EC2}).

\begin{theorem}[{MacMahon \cite[1.55, vol 1]{CA}}]\label{prop:ribbon}
The number of linear extensions of a ribbon poset $\cZ^{(n)}_S$ is given by
\begin{equation}\label{eq:Detribbon}
e(\cZ^{(n)}_S) = n!\cdot \det\left( \frac{1}{(s_{j+1}-s_i)!} \right)_{0\leq i,j\leq k},
\end{equation}
where $s_0=0$ and $s_{k+1} = n$.
\end{theorem}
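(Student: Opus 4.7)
The plan is to combine inclusion--exclusion on descent sets with a direct expansion of the determinant, matching terms through an explicit bijection between permutations and subsets.

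\emph{Step 1 (translation to descents).} By the definition of the cover relations of $\cZ^{(n)}_S$, a linear extension $f\co \cZ^{(n)}_S \to [n]$ corresponds to the permutation $w = (f(z_1), \ldots, f(z_n)) \in \mathfrak{S}_n$ with $\Des(w) = S$: the downward covers of the ribbon enforce $w_{i+1} < w_i$ exactly for $i \in S$. Hence $e(\cZ^{(n)}_S) = \#\{w \in \mathfrak{S}_n : \Des(w) = S\}$.

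\emph{Step 2 (M\"obius inversion).} For any $T = \{t_1 < \cdots < t_r\} \subseteq [n-1]$ with $t_0:=0$ and $t_{r+1}:=n$, permutations with $\Des(w) \subseteq T$ are shuffles of increasing sequences of lengths $t_{i+1}-t_i$, so $\#\{w : \Des(w) \subseteq T\} = n!/\prod_{i=0}^r(t_{i+1}-t_i)!$. M\"obius inversion on the Boolean lattice $2^{S}$ then yields
\[
e(\cZ^{(n)}_S) \,=\, \sum_{T \subseteq S}(-1)^{|S \setminus T|}\,\frac{n!}{\prod_{i=0}^{|T|}(t_{i+1}-t_i)!}.
\]

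\emph{Step 3 (determinant expansion).} Expanding the right-hand side of \eqref{eq:Detribbon} as a signed sum over permutations $\pi$ of $\{0,1,\ldots,k\}$, the nonzero summands are those with $\pi(i) \ge i-1$ for every $i$ (using the convention $1/m!=0$ for $m<0$). Such $\pi$ decompose into disjoint cycles supported on consecutive blocks of positions, parametrized by the subset $C = \{c_1 < \cdots < c_m\} \subseteq \{1,\ldots,k\}$ marking block boundaries: with $c_0:=0$ and $c_{m+1}:=k+1$, the cycle on the block $\{c_j, c_j+1, \ldots, c_{j+1}-1\}$ sends $c_j \mapsto c_{j+1}-1$ and each larger block-element down by one. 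This block contributes sign $(-1)^{c_{j+1}-c_j-1}$ and weight $1/(s_{c_{j+1}}-s_{c_j})!$ (inner positions give $1/0! = 1$), so the total weighted sign of $\pi$ is $(-1)^{k-m}\prod_{j=0}^{m} 1/(s_{c_{j+1}}-s_{c_j})!$. The bijection $C \longleftrightarrow T:=\{s_{c_1}, \ldots, s_{c_m}\}\subseteq S$ then matches the summands of $n!\det(M_{i,j})$ term-for-term with those of the M\"obius inversion formula in Step~2, since $(-1)^{k-m} = (-1)^{|S\setminus T|}$. This completes the proof.

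The main obstacle is Step~3: identifying precisely which permutations contribute nonzero terms, describing their cycle decomposition, and tracking signs. This is essentially a special case of the standard conversion of inclusion--exclusion sums into determinants (via Lindstr\"om--Gessel--Viennot, or Stanley's \textup{[EC1, Example~2.2.4]} referenced in the paper's outline), but the explicit matching of the cycle data with subsets of $S$ requires careful bookkeeping.
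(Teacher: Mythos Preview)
Your proof is correct and follows essentially the same route as the paper: inclusion--exclusion over subsets of $S$ combined with Stanley's determinant identity (Lemma~\ref{lemma:det}). The only difference is packaging: the paper derives this formula by specializing its general folding framework (Lemma~\ref{thm:det}) to the ribbon case and citing Lemma~\ref{lemma:det} as a black box, whereas you carry out the descent-set M\"obius inversion directly and then re-prove Lemma~\ref{lemma:det} inline via the explicit cycle decomposition in Step~3.
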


This result can be proved using the following lemma, due to Stanley, that turns an alternating sum over sets into a determinant. 

\begin{lemma}[{Stanley \cite[Example 2.2.4]{EC1}}] \label{lemma:det}
Let $g$ be any function defined on $[0,k+1]\times [0,k+1]$ that satisfies $g(i,i) = 1$ and $g(i,j)=0$ if $j<i$, and let $D_k$ be the sum
\[D_k := \sum_{1\le i_1< i_2 < \cdots < i_j \le k}  (-1)^{k-j} g(0,i_1)g(i_1,i_2)\cdots g(i_j, k+1).\]

Then $D_k = \det(E_k)$,
where $E_k=(e_{i,j})$ is the  $(k+1)\times (k+1)$ matrix with entries $e_{i,j} = g(i, j+1)$ for $(i,j)\in [0,k]\times [0,k]$.
\end{lemma}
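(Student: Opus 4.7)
The plan is to prove $D_k = \det(E_k)$ by expanding the determinant via the Leibniz formula, identifying which permutations give non-vanishing terms, and matching those terms to the summands of $D_k$ through an explicit bijection.

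First, I would exploit the sparsity of $E_k$. Because $e_{i,j} = g(i, j+1)$, the hypothesis $g(r,s) = 0$ for $s < r$ forces $e_{i,j} = 0$ whenever $j < i-1$, while $g(r,r) = 1$ forces $e_{i, i-1} = 1$. In the Leibniz expansion $\det(E_k) = \sum_{\sigma} \text{sgn}(\sigma) \prod_{i=0}^{k} e_{i, \sigma(i)}$, a permutation $\sigma$ of $[0,k]$ can therefore contribute a non-zero term only if $\sigma(i) \geq i-1$ for every $i$.

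Next, I would parametrize these admissible permutations. Given such a $\sigma$, set $J(\sigma) := \{i \in [0,k] : \sigma(i) = i-1\}$, which automatically lies in $[1,k]$, and let $I := [k] \setminus J = \{i_1 < \cdots < i_j\}$, with the boundary conventions $i_0 := 0$ and $i_{j+1} := k+1$. I claim that $\sigma$ is uniquely reconstructed from $I$: on each block $[i_l, i_{l+1} - 1]$, the positions $i_l + 1,\ldots, i_{l+1}-1$ all lie in $J$ and must map to $i_l,\ldots, i_{l+1}-2$, so the only value left in the block is $i_{l+1} - 1$, which forces $\sigma(i_l) = i_{l+1} - 1$. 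Thus $\sigma$ restricts to a single cycle $(i_l,\, i_{l+1}-1,\, i_{l+1}-2,\, \ldots,\, i_l + 1)$ on each block, and this gives a bijection between admissible $\sigma$ and subsets $I \subseteq [k]$.

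Finally, I would match signs and monomials. A cycle of length $i_{l+1} - i_l$ has sign $(-1)^{i_{l+1} - i_l - 1}$, so the total sign of $\sigma$ telescopes to $(-1)^{(k+1) - (j+1)} = (-1)^{k-j}$. The product $\prod_i e_{i,\sigma(i)}$ collapses since positions in $J$ contribute factors $g(i,i) = 1$, leaving only the factors $g(i_l, \sigma(i_l)+1) = g(i_l, i_{l+1})$ from the block-heads $i_l \in \{0\} \cup I$. Hence each admissible $\sigma$ contributes $(-1)^{k-j} g(0, i_1) g(i_1, i_2) \cdots g(i_j, k+1)$, which matches the summand of $D_k$ indexed by $I$. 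The main obstacle is the middle step — establishing the block-cycle decomposition and verifying that $\sigma$ is uniquely determined by $J$ — after which the sign and monomial calculations reduce to routine bookkeeping.
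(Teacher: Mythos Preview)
Your proof is correct, and the paper does not actually prove this lemma---it is quoted from Stanley \cite[Example 2.2.4]{EC1} without proof---so there is no ``paper's approach'' to compare against. Your Leibniz-expansion argument is the standard way to establish such identities and would be accepted as a complete proof.

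One small point of exposition: in the middle step you write that ``the only value left in the block is $i_{l+1}-1$, which forces $\sigma(i_l) = i_{l+1}-1$.'' As phrased this presupposes that $\sigma$ maps each block $[i_l,i_{l+1}-1]$ to itself, which is true but not yet established. The cleanest fix is to argue by downward induction on $l$: once you know $\sigma(i_{l'})=i_{l'+1}-1$ for all $l'>l$, the positions $i_l+1,\ldots,k$ occupy exactly the values $\{i_l,\ldots,k\}\setminus\{i_{l+1}-1\}$, and since $\sigma(i_l)\ge i_l$ the only remaining possibility is $\sigma(i_l)=i_{l+1}-1$. You already flag this step as ``the main obstacle,'' so you clearly see where the work lies; just make the global (rather than block-local) nature of the argument explicit. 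Everything else---the sign telescoping to $(-1)^{k-j}$ and the collapse of the monomial to $g(0,i_1)\cdots g(i_j,k+1)$---is exactly right.
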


We will use Lemma~\ref{lemma:det} to prove our main results in Sections~\ref{sec:folding}, \ref{section:mobile}, and~\ref{section:q}. 

\subsubsection{$d$-complete posets}
\label{section:d-complete}

Defined by Proctor in \cite{p99}, $d$-complete posets form a large class of posets containing rooted tree posets and posets arising from Young diagrams,\footnote{See \cite[Table 1]{ProClass} for a complete classification of $d$-complete posets.} while still retaining a hook-length formula for their number of linear extensions.  We recall their definition below (see Definition~\ref{def:dcomp}).

A poset $\cP$ has a {\em diamond} if there are four elements $w,x,y,z$ in $\cP$ such that $z$ covers $x$ and $y$, while $x$ and $y$ cover $w$. For $k \geq 3$, a {\em double-tailed diamond poset} $d_k(1)$ is a poset obtained by adding a $k-3$ chain to the top and  bottom of a diamond. The {\em neck} elements are the $k-2$ elements above the two incomparable elements of the diamond. A {\em $d_k$-interval} is an interval $[w,z]$ which is isomorphic to $d_k(1)$.   Additionally, for $k\geq 3$, a {\em $d_k^-$-convex set} is a $d_k$-interval with the maximal element removed. Note that for $k\geq 4$, a $d_k^-$-convex set is an interval.

\begin{definition}[\cites{ProClass,p99,Okada}]\label{def:dcomp}
A poset $\cP$ is \emph{$d$-complete} if, for any $k\geq 3$, the following properties are satisfied:

\begin{enumerate}
    \item If $I$ is a $d_k^-$-convex set, then there exists an element $p$ such that $p$ covers the maximal elements of $I$, and $I\cup p$ is a $d_k$-interval.
    \item  If $I=[w, z]$ is a $d_k$-interval, then $z$ does not cover any elements of $\cP$ outside $I$.
    \item There are no $d_k^-$-convex sets which differ only in their minimal elements.
\end{enumerate}
\end{definition}

A connected $d$-complete poset has a unique maximal element \cite[Section 14]{ProClass}.  Given a connected $d$-complete poset $\cP$, its {\em top tree} $\Gamma$ is the subgraph of the Hasse diagram of $\cP$ consisting of vertices $x$ in $\cP$ such that $y \geq_{\cP} x$ is covered by at most one other element. (This subgraph is indeed a tree.) An element $y$ of $\cP$ is {\em acyclic} if $y \in \Gamma$ and is not part of the neck of any $d_k$-interval of $\cP$. Note that if $\cP$ is a rooted tree, then $\Gamma = \cP$ and all its elements are acyclic.

Slant sums (see Definition~\ref{def:slantsum}) can be used to combine two $d$-complete posets to obtain a larger $d$-complete poset.

\begin{proposition}[{Proctor \cite[Proposition B]{ProClass}}] \label{prop: slant sum tree d complete}
Let $\cP_1$ be a connected $d$-complete poset with an acyclic element $y$, and  let $\cP_2$ be a connected $d$-complete poset with maximal element $x$. Then the slant sum $\cP:=\cP_1 {}^y\backslash_x \cP_2$ is a $d$-complete poset, and the acyclic elements of $\cP_1$ and $\cP_2$ are acyclic elements of $\cP$.
\end{proposition}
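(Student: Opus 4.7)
The plan is to verify the three conditions of Definition~\ref{def:dcomp} for $\cP = \cP_1 {}^y\backslash_x \cP_2$ and then deduce the statement about acyclic elements. The relations in $\cP$ are transparent: the only new cover is $x \lessdot y$, so a $\cP_2$-element $u$ and a $\cP_1$-element $v$ satisfy $u \le_\cP v$ if and only if $v \ge_{\cP_1} y$, and there are no reverse-direction cross-piece comparabilities. All arguments rest on this observation together with the hypothesis that $y$ is acyclic in $\cP_1$, which in particular forces $y$ to be covered by at most one element in $\cP_1$.

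The first step is a structural lemma: every diamond in $\cP$ lies entirely in $\cP_1$ or entirely in $\cP_2$. I would prove this by a short case analysis. Given a diamond $w \lessdot a, b \lessdot c$ with $a,b$ incomparable, placing $a$ and $b$ on opposite sides forces them to be comparable via $y$, and placing $w$ or $c$ on the opposite side from $\{a,b\}$ forces the cross-piece relation to go through the unique new cover $x \lessdot y$, which collapses $a = b = y$ or $a = b = x$. I would then promote this to: every $d_k$-interval $[w,z]$ and every $d_k^-$-convex set $I$ in $\cP$ lies in a single piece. The diamond is confined to one piece by the previous claim, and tracing the upper and lower chains shows that any attempted crossing would have to happen at the $x \lessdot y$ cover, which would either place $y$ in the neck of a $d_k$-interval of $\cP_1$ or force $y$ to be covered by two elements, either of which contradicts acyclicity of $y$.

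With this structural reduction in hand, conditions (1)--(3) of Definition~\ref{def:dcomp} follow quickly. A $d_k^-$-convex set $I$ of $\cP$ lies in some $\cP_i$, and the $d$-completeness of $\cP_i$ supplies a covering element $p$; one then checks that $p$ remains a valid cover in $\cP$ since no new cover relations are introduced away from the $x \lessdot y$ edge, and that the resulting interval is still a $d_k$-interval in $\cP$. Condition (2) reduces to the corresponding property in $\cP_i$ because the top element $z$ of any $d_k$-interval acquires no new covers in $\cP$ (it would have to be $x$, but $x$ cannot be the top of a $d_k$-interval inside $\cP_2$ since $x$ is maximal there), and condition (3) is immediate from the single-piece reduction.

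For the acyclicity claim, an acyclic element $u$ of either $\cP_i$ remains in the top tree of $\cP$ because no element $\ge_\cP u$ acquires new upward covers, and it remains outside the neck of every $d_k$-interval of $\cP$ since every such interval is contained in a single piece. The main obstacle in this plan is the case analysis in the structural lemma, particularly the extension from diamonds to arbitrary $d_k$-intervals, where the chains above and below the diamond must be shown not to sneak across the slant edge. The acyclicity hypothesis on $y$ is used essentially throughout to rule out every attempted crossing configuration.
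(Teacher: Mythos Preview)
The paper does not supply a proof of this proposition; it is quoted from Proctor~\cite[Proposition~B]{ProClass} and used as a black box. There is therefore nothing in the paper to compare your argument against, and the proposal must be judged on its own merits.

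Your overall plan is correct: once one knows that every $d_k$-interval and every $d_k^-$-convex set of $\cP$ lies wholly in one of $\cP_1,\cP_2$, conditions (1)--(3) of Definition~\ref{def:dcomp} and the acyclicity claim transfer from the pieces with only routine checks. Your treatment of diamonds is fine, and the case ``diamond in $\cP_1$, lower chain crosses through $y$'' works essentially as you suggest---though the contradiction is that the diamond bottom, an element $\ge_{\cP_1} y$, acquires two upper covers in $\cP_1$, violating the top-tree property for $y$, rather than $y$ itself being doubly covered.

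The genuine gap is the case ``diamond in $\cP_2$, upper chain crosses into $\cP_1$''. You claim this would place $y$ in the neck of a $d_k$-interval \emph{of $\cP_1$}, but it only places $y$ in the neck of the given $d_k$-interval of $\cP$; the $\cP_1$-portion $[y,z]_{\cP_1}$ is merely a chain, so acyclicity of $y$ in $\cP_1$ is not contradicted. The obstruction in this case actually lives in $\cP_2$. The $\cP_2$-portion of such an interval is the entire principal filter of $\cP_2$ at $w$, and it carries a bottom tail of length $k-2$ but a neck (up to $x$) of some length $m<k-2$. Applying condition~(1) of Definition~\ref{def:dcomp} inductively inside $\cP_2$ one sees that $[\text{tail}_j,\text{neck}_j]_{\cP_2}$ is a $d_{j+2}$-interval for $j=1,\dots,m$, and then $[\text{tail}_{m+1},\,x]_{\cP_2}$ is a $d_{m+3}^-$-convex set, which would force some element of $\cP_2$ to cover the maximum $x$. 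That is the contradiction, and it does not come from the acyclicity hypothesis on $y$ at all. This is precisely the ``main obstacle'' you flag at the end, and your stated mechanism for ruling it out does not work as written.
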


Next, we recall the hook-length formula for the number of linear extensions of a $d$-complete poset. 

\begin{definition}[\cites{p99,ProClass,Okada}]\label{def:dcomphook}
The \emph{hook length} $h_{\cP}(z)$ of an element $z$ in a $d$-complete poset $\cP$ is defined as follows:
\begin{enumerate}

    \item If $z$ is not in the neck of any $d_k$-interval, then $h_{\cP}(z) = \#\{y \mid y \le_{\cP} z\}$.
    \item If $z$ is in the neck of a $d_k$-interval, then we can find some element $w$ such that $[w, z]$ is a $d_{\ell}$-interval for some $\ell$. If $x$ and $y$ are the two incomparable elements in the $d_{\ell}$-interval, then $h_{\cP}(z) = h_{\cP}(x) + h_{\cP}(y) - h_{\cP}(w)$. 
\end{enumerate}
\end{definition}

\begin{theorem}[{Peterson--Proctor \cite{ProHLF}}]\label{thm:dhook}
The number of linear extensions of a $d$-complete poset $\cP$ with $n$ elements is
\[
e(\cP) = \frac{n!}{\prod_{x \in \cP} h_\cP(x)},
\]
where $h_{\cP}(x)$ is the hook length of $x$ in $\cP$ from Definition~\ref{def:dcomphook}.
\end{theorem}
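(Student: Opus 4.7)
The plan is to induct on the number of elements $n$, using the slant-sum decomposition of $d$-complete posets recorded in Proposition~\ref{prop: slant sum tree d complete}. By Proctor's classification, every connected $d$-complete poset is either on a short list of irreducible shapes (partition and shifted-partition shapes, plus a finite collection of exceptional families) or arises as a slant sum $\cP = \cP_1 {}^{y}\backslash_x \cP_2$ of strictly smaller connected $d$-complete posets at an acyclic element $y$ of $\cP_1$ and the maximum $x$ of $\cP_2$.

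For the base cases, rooted tree posets have all hook lengths of type (1) in Definition~\ref{def:dcomphook}, so the formula reduces to Knuth's result (Theorem~\ref{prop:rootedtree}). Straight and shifted shapes are covered by the Frame--Robinson--Thrall formula and its shifted analogue. The remaining finite list of irreducible shapes can be verified by direct computation (or absorbed into the inductive step once the slant-sum recursion is in hand). For the inductive step, suppose $\cP = \cP_1 {}^{y}\backslash_x \cP_2$. I would combine the disjoint-sum formula of Proposition~\ref{prop:disjointsum} with an inclusion-exclusion over linear extensions of $\cP_1 + \cP_2$ that respect the added cover relation $x \lessdot y$, in the spirit of the folding arguments of Section~\ref{sec:folding}. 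The goal is to produce a telescoping identity in which $e(\cP)/n!$ equals $\bigl(e(\cP_1)/n_1!\bigr)\bigl(e(\cP_2)/n_2!\bigr)$ times a rational factor that matches the ratio of hook-length products $\prod_{x\in\cP_1} h_{\cP_1}(x) \prod_{x\in\cP_2} h_{\cP_2}(x) / \prod_{x\in\cP} h_\cP(x)$. By Proposition~\ref{prop: slant sum tree d complete} the set of acyclic elements is preserved, so the hook lengths outside of $d_k$-necks change only along the chain connecting $x$ up to the roots, where the change is a shift by $|\cP_2|$ and is easy to track.

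The main obstacle is the behavior of hook lengths inside $d_k$-necks, which are defined by the recursive identity $h_\cP(z) = h_\cP(x') + h_\cP(y') - h_\cP(w')$ of Definition~\ref{def:dcomphook}(2) rather than as a simple cardinality. Under a slant sum, a new neck can be created or an existing one extended, and one must check a nontrivial algebraic identity relating these recursive hook values to the factorials appearing in the inclusion-exclusion. This is precisely the step where Peterson--Proctor's original argument invokes the Weyl denominator formula for a Kac--Moody algebra whose Weyl group encodes the linear extensions. A purely combinatorial alternative would be to realize $e(\cP)$ as a weighted enumeration of $P$-partitions or $E$-trees and then apply jeu-de-taquin style local moves, which is the route I would attempt here since it meshes better with the folding/inclusion-exclusion techniques developed in later sections of this paper.
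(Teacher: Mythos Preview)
The paper does not prove this theorem; it is quoted in Section~\ref{section:d-complete} as a background result of Peterson--Proctor \cite{ProHLF} (with a more recent proof by Kim--Yoo \cite{d_comp} mentioned later), so there is no in-paper argument to compare your proposal against.

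As for the proposal itself, it is an outline rather than a proof, and two of its load-bearing steps are not actually carried out. First, the ``short list of irreducible shapes'' in Proctor's classification \cite{ProClass} is not a finite list: it consists of fifteen \emph{infinite} families of slant-irreducible $d$-complete posets, so ``verified by direct computation'' does not dispose of the base case. You would still need a uniform argument (or fifteen separate ones) establishing the hook-length formula for each irreducible family, and several of these families are far from rooted trees or (shifted) shapes. Second, in the inductive step you state that the goal is a telescoping identity matching $e(\cP)/n!$ to the appropriate ratio of hook-length products, but you do not produce it; the inclusion-exclusion of Section~\ref{sec:folding} removes a single cover relation and gives $e(\cP)=e(\cP_1+\cP_2)-e(\cP_{\{(x,y)\}})$, and the second term is not of a form to which the inductive hypothesis applies. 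You correctly identify the obstruction coming from hook lengths in $d_k$-necks and note that Peterson--Proctor circumvent it via the Weyl denominator formula, but the alternative you gesture at (jeu-de-taquin style local moves on $P$-partitions) is itself a substantial program---essentially what Kim--Yoo \cite{d_comp} accomplish with $q$-integrals---and is not developed here.
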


\subsection{\texorpdfstring{$q$}{q}-analogues of the number of linear extensions}
\label{section:qanalogues}

We recall $q$-analogues of Theorems~\ref{prop:rootedtree}, \ref{prop:ribbon}, and \ref{thm:dhook} and prepare the preliminaries for the $q$-analogues of our results, which we display in Section~\ref{section:q}.  

A \emph{labeled poset} $(\cP, \omega)$ is a poset $\cP$ with $n$ elements, together with a bijection $\omega\co \cP \to [n]$.  We call $\omega$ a \emph{natural labeling} if for any $x,y\in \cP$ with $x <_{\cP} y$, we have $\omega(x) < \omega(y)$. A labeling $\omega$ is \emph{regular} if we have the following: for all $x <_{\cP} z$ and $y\in \cP$, if $\omega(x) < \omega(y) < \omega(z)$ or $\omega(x) > \omega(y) > \omega(z)$ then $x <_{\cP} y$ or $y <_{\cP} z$.  For more on natural and regular labelings, we point to \cite[Chapter 3]{EC1} and \cite{q_hook} respectively.

\begin{definition}
Let $(\cP,\omega)$ be a labeled poset.  If $f\co \cP \rightarrow [n]$ is a linear extension of $\cP$, then the permutation $\omega \circ f^{-1}\in \mathfrak{S}_n$ is called a \emph{linear extension of the labeled poset} $(\cP,\omega)$.  We write $\cL(\cP,\omega)$ for the set of all linear extensions of $(\cP,\omega)$.
\end{definition}

\begin{definition}\label{def:invmaj}
Let $\sigma = \sigma_1\cdots \sigma_n \in \mathfrak{S}_n$ be a permutation, and define $\Des(\sigma) := \{i \in [n-1] \mid \sigma_i > \sigma_{i+1}\}$.  The \emph{inversion index} $\inv(\sigma)$ of $\sigma$ is the number of inversions of $\sigma$, and the \emph{major index} of $\sigma$ is
\[
\maj(\sigma) = \sum_{i \in \Des(\sigma)} i.
\]
\end{definition}

The \emph{descent set} of a labeled poset $(\cP,\omega)$ is given by $\Des(\cP,\omega) = \{x \in \cP \mid x \lessdot_\cP y \text{ and } \omega(x) > \omega(y)\}$. There is a version of the inversion index in Definition~\ref{def:invmaj} for labeled posets and of the major index for labeled $d$-complete posets.

\begin{definition}
The \emph{inversion index} of a labeled poset $(\cP,\omega)$ is the number of inversions of $(\cP,\omega)$:
\[
\inv(\cP,\omega) = \#\{(x,y) \mid \omega(y) < \omega(x) \text{ and } x <_{\cP} y \}.
\]
\end{definition}

\begin{definition}\label{def: major index d-complete}
The {\em major index} of a labeled $d$-complete poset $(\cP,\omega)$ is the sum of the hook lengths at the descents of $\omega$:
\[
\maj(\cP,\omega) = \sum_{x \in \Des(\cP,\omega)} h_{\cP}(x).
\]
\end{definition}

We are now ready to state the $q$-analogues of the theorems from the previous section.  For each integer $n\ge 1$, we define the {\em $q$-integer} $[n]_q := 1 + q + q^2 + \cdots + q^{n-2} + q^{n-1}$ and the {\em $q$-factorial} $[n]_q! := [n]_q [n-1]_{q} \cdots [2]_q [1]_q$. We also define the {\em $q$-multinomial coefficient} $\txtqbin{m}{\ell_1,\ldots,\ell_k}{q} := \frac{[m]_q!}{[\ell_1]_q! \cdots [\ell_k]_q!}$.

\begin{definition}\label{def:maj-inv-qanalog}
 The \emph{major index (inversion) $q$-analogue} of the number of linear extensions of a labeled poset $(\cP,\omega)$ with $n$ elements  is
\[
e^{\stat}_q(\cP, \omega) :=  \sum_{\sigma \in \cL(\cP,\omega)} q^{\stat(\sigma)},
\]
where $\stat \in \{\maj,\inv\}$. We write $\overline{e}^{\stat}_q(\cP, \omega) := e^{\stat}_q(\cP, \omega)/[n]_q!$ for the major index (inversion) $q$-analogue of $\overline{e}(\cP)$.
\end{definition}
 
Note that setting $q = 1$ in Definition~\ref{def:maj-inv-qanalog} recovers $e(\cP)$ and $\overline{e}(\cP)$. Moreover, for the major index, Stanley showed that the rational function $e_q^{\maj}(\cP,\omega)(1-q)^n/[n]_q!$ gives the generating function of {\em $(\cP,\omega)$-partitions} \cite[Theorem 3.15.7]{EC1}.

The next standard results are $q$-analogues of Proposition~\ref{prop:disjointsum}.

\begin{proposition}[{\cite[Exercise 3.162(a)]{EC1}}] \label{prop:qdjsum-maj}
Let $(\cP_1+\cdots + \cP_k,\omega)$ be a labeled poset that is the disjoint sum of posets $\cP_i$, each with $n_i$ elements. Then
\[
e_q^\maj((\cP_1,\omega_1) + \cdots + (\cP_k,\omega_k)) = \qbin{n_1 + \cdots + n_k}{n_1,\ldots,n_k}{q} e_q^\maj(\cP_1,\omega_1)\cdots e_q^\maj(\cP_k,\omega_k),
\]
where $\omega_i$ is the labeling obtained by restricting $\omega$ to $\cP_i$.
\end{proposition}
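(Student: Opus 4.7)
The plan is to reduce the identity to the classical shuffle identity for the major index. This identity asserts that for permutations $\pi_1,\ldots,\pi_k$ on pairwise disjoint totally ordered letter sets of sizes $n_1,\ldots,n_k$ with $n := n_1+\cdots+n_k$, one has
\begin{equation*}
\sum_{\sigma} q^{\maj(\sigma)} \;=\; \qbin{n}{n_1,\ldots,n_k}{q}\, q^{\maj(\pi_1)+\cdots+\maj(\pi_k)},
\end{equation*}
where $\sigma$ ranges over all shuffles of $\pi_1,\ldots,\pi_k$. This is a classical result going back to MacMahon; I would either cite it or sketch a proof by induction on $k$ that reduces to the $k=2$ case, which in turn follows from the Gaussian generating function for lattice paths in a rectangle weighted by the major index.

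Since $\omega\co \cP_1+\cdots+\cP_k\to[n]$ is a bijection, the label sets $\omega(\cP_1),\ldots,\omega(\cP_k)$ partition $[n]$ into pairwise disjoint subsets. Any linear extension $\sigma$ of $(\cP_1+\cdots+\cP_k,\omega)$ decomposes uniquely as the shuffle of its subsequences $\pi_1,\ldots,\pi_k$ on the letter sets $\omega(\cP_i)$. Because there are no order relations between distinct summands, each subword $\pi_i$ is itself a linear extension of $(\cP_i,\omega_i)$, and conversely every tuple of component linear extensions shuffled together produces a valid linear extension of $(\cP_1+\cdots+\cP_k,\omega)$. The quantities $e_q^{\maj}(\cP_i,\omega_i)$ are interpreted with $\omega_i$ the restriction of $\omega$; since the major index depends only on the relative order of the letters, this value agrees with the one obtained by standardizing $\omega_i$ to a bijection $\cP_i\to[n_i]$.

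Combining the decomposition with the shuffle identity,
\begin{align*}
e_q^{\maj}(\cP_1+\cdots+\cP_k,\omega)
&= \sum_{(\pi_1,\ldots,\pi_k)} \sum_{\sigma} q^{\maj(\sigma)}\\
&= \qbin{n}{n_1,\ldots,n_k}{q} \sum_{(\pi_1,\ldots,\pi_k)} \prod_{i=1}^{k} q^{\maj(\pi_i)}\\
&= \qbin{n}{n_1,\ldots,n_k}{q} \prod_{i=1}^{k} e_q^{\maj}(\cP_i,\omega_i),
\end{align*}
where the outer sum runs over tuples $(\pi_i)\in \prod_i \cL(\cP_i,\omega_i)$ and the inner sum over shuffles $\sigma$ of the $\pi_i$. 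The main obstacle is the shuffle identity itself, as the descent positions of a shuffle depend intricately on the interleaving pattern and not only on the descents of the constituent $\pi_i$. An alternative route that sidesteps this obstacle is to invoke Stanley's fundamental theorem on $(\cP,\omega)$-partitions: the $(\cP,\omega)$-partitions of a disjoint sum factor as tuples of those of its components, so their generating series factor, and clearing the common denominator $\prod_{j=1}^{n}(1-q^j)$ recovers the stated identity at once.
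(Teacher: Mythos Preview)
The paper does not actually prove this proposition; it is stated as a preliminary result with a citation to \cite[Exercise 3.162(a)]{EC1} and used as a black box. So there is no proof in the paper to compare against.

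Your argument is correct. The decomposition of $\cL(\cP_1+\cdots+\cP_k,\omega)$ into shuffles of tuples from $\prod_i \cL(\cP_i,\omega_i)$ is immediate from the absence of relations between summands, and the major-index shuffle identity you invoke is exactly MacMahon's classical result (the $k=2$ case is what Stanley records as \cite[Proposition~1.3.17]{EC1}, and the general case follows by induction). Your remark that standardizing $\omega_i$ does not change $\maj$ is the right observation to make the pieces fit. The alternative route through $(\cP,\omega)$-partitions is in fact closer to how Stanley frames the exercise: the generating function $\sum_{\sigma\in\cL(\cP,\omega)} q^{\maj(\sigma)}/\prod_{j=1}^{n}(1-q^j)$ is the $(\cP,\omega)$-partition generating function, and for a disjoint sum this factors trivially, yielding the identity after clearing denominators. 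Either approach is fine; the shuffle route is more hands-on, while the $(\cP,\omega)$-partition route explains structurally why no hypothesis on how the label sets $\omega(\cP_i)$ interleave is needed (in contrast to the inversion analogue in Proposition~\ref{prop:qdjsum-inv}).
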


\begin{proposition}[{\cite[Section 3]{q_hook}}] \label{prop:qdjsum-inv}
Let $(\cP_1+\cdots + \cP_k,\omega)$ be a labeled poset that is the disjoint sum of posets $\cP_i$, each with $n_i$ elements. Suppose that 
$\omega$ has the property that every element of the label set $\omega(\cP_i)$ is smaller than every element of the label set $\omega(\cP_j)$ whenever $i<j$. Then
\[
e_q^\inv((\cP_1,\omega_1) + \cdots + (\cP_k,\omega_k)) = \qbin{n_1 + \cdots + n_k}{n_1,\ldots,n_k}{q} e_q^\inv(\cP_1,\omega_1)\cdots e_q^\inv(\cP_k,\omega_k),
\]
where $\omega_i$ is the labeling obtained by restricting $\omega$ to $\cP_i$.
\end{proposition}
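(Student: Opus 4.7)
Let $n = n_1 + \cdots + n_k$ and write $B_i := \omega(\cP_i)$ for the block of labels assigned to $\cP_i$. The hypothesis says that $\max B_i < \min B_j$ whenever $i < j$, so in particular $B_i = [n_1 + \cdots + n_{i-1}+1,\, n_1 + \cdots + n_i]$.

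First I would describe $\cL(\cP_1+\cdots+\cP_k,\omega)$ as a shuffle. A linear extension $\sigma \in \mathfrak{S}_n$ of $(\cP,\omega)$ restricted to the positions carrying labels from $B_i$ is, after relabeling, a linear extension $\pi^{(i)}$ of $(\cP_i,\omega_i)$. Conversely, given linear extensions $\pi^{(i)} \in \cL(\cP_i,\omega_i)$ for $i=1,\ldots,k$ together with a choice of disjoint position sets $A_1,\ldots,A_k \subseteq [n]$ with $|A_i|=n_i$, there is a unique $\sigma$ obtained by placing $\pi^{(i)}$ in the positions $A_i$ in order (this is a valid linear extension because $\cP$ is a disjoint sum, so there are no cross-block order relations to check). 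Recording block memberships yields a word $w = w_1 \cdots w_n \in \{1,\ldots,k\}^n$ with $n_i$ letters equal to $i$, and this establishes a bijection between $\cL(\cP,\omega)$ and pairs $(w,\,(\pi^{(1)},\ldots,\pi^{(k)}))$.

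The key step is to split the inversions of $\sigma$ into same-block and cross-block contributions. For positions $a < b$ both lying in $A_i$, the pair $(a,b)$ is an inversion of $\sigma$ if and only if it is an inversion of $\pi^{(i)}$, contributing $\sum_{i=1}^k \inv(\pi^{(i)})$ in total. For $a \in A_i$ and $b \in A_j$ with $a<b$ and $i \neq j$, the inequality $\sigma_a > \sigma_b$ is equivalent to the elementwise comparison $B_i > B_j$, which by the labeling hypothesis is equivalent to $i > j$. Hence the cross-block inversions of $\sigma$ are exactly the inversions of the word $w$, that is, pairs $a<b$ with $w_a > w_b$.

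Summing over all choices and using the standard identity $\qbin{n}{n_1,\ldots,n_k}{q} = \sum_{w} q^{\inv(w)}$, where $w$ runs over words of content $(n_1,\ldots,n_k)$, gives
\[
e_q^\inv(\cP,\omega) \;=\; \Bigl(\sum_w q^{\inv(w)}\Bigr) \prod_{i=1}^k \Bigl(\sum_{\pi^{(i)}} q^{\inv(\pi^{(i)})}\Bigr) \;=\; \qbin{n}{n_1,\ldots,n_k}{q} \prod_{i=1}^k e_q^\inv(\cP_i,\omega_i),
\]
as claimed. The only delicate point, and the place where the hypothesis is essential, is in the identification of cross-block inversions of $\sigma$ with inversions of the word $w$: without the block ordering of $\omega$, the sign of $\sigma_a - \sigma_b$ would depend on the individual labels rather than the block indices alone, and $\inv$ would fail to factor through the shuffle decomposition.
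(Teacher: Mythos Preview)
The paper does not give its own proof of this proposition; it is simply quoted from Bj\"orner--Wachs \cite[Section 3]{q_hook}. Your argument is correct and is the standard shuffle decomposition: bijecting $\cL(\cP,\omega)$ with pairs consisting of a word of content $(n_1,\ldots,n_k)$ and a tuple of component linear extensions, then splitting $\inv$ into within-block and cross-block contributions. The identification of cross-block inversions with inversions of the content word is exactly where the block-ordering hypothesis on $\omega$ is used, as you note. One cosmetic point: when you restrict $\omega$ to $\cP_i$ the image is $B_i$ rather than $[n_i]$, so strictly speaking $\pi^{(i)}$ is a permutation of $B_i$; composing with the order-preserving bijection $B_i\to[n_i]$ preserves $\inv$, so this does not affect the count.
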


We have the following $q$-analogues, by Bj\"{o}rner and Wachs \cite{q_hook}, of the Knuth hook-length formula in Theorem~\ref{prop:rootedtree}.

\begin{theorem}[{Bj\"orner--Wachs \cite[Theorems 1.1 and 1.2]{q_hook}}] \label{lemma:qhook}
    Let $(\cP,\omega)$ be a labeled rooted tree poset with $n$ elements, where $\omega$ is a regular labeling. Then
    \begin{equation}\label{eq: inv q-analogue rooted trees}
        e_q^{\inv}(\cP,\omega) = q^{\inv(\cP,\,\omega)} \frac{[n]_q!}{\prod_{x\in \cP}[h_{\cP}(x)]_q}.
 \end{equation}
    Moreover, if $\omega$ is any labeling, then
    \begin{equation} \label{eq: maj q-analogue rooted trees} 
e_q^{\maj}(\cP,\omega)
 = q^{\maj(\cP,\,\omega)} \frac{[n]_q!}{\prod_{x\in \cP}[h_{\cP}(x)]_q}.
    \end{equation}
\end{theorem}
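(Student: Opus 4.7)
My plan is to prove both formulas by induction on $n=|\cP|$; the base case $n=1$ is trivial. For the inductive step, let $r$ be the root of $\cP$ (its unique maximum), let $r_1,\ldots,r_k$ be the children of $r$, and let $\cP_i$ denote the subtree rooted at $r_i$, with $n_i=|\cP_i|$. Write $\omega_i$ and $\omega'$ for the restrictions of $\omega$ to $\cP_i$ and to $\cP':=\cP\setminus\{r\}=\cP_1+\cdots+\cP_k$. Since $h_\cP(r)=n$ and $h_\cP(x)=h_{\cP_i}(x)$ for $x\in\cP_i$, the hook product factors as $\prod_{x\in\cP}[h_\cP(x)]_q=[n]_q\cdot\prod_i\prod_{x\in\cP_i}[h_{\cP_i}(x)]_q$. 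In every linear extension $f$ of $\cP$ we have $f(r)=n$, so truncation of the last entry yields a bijection $\cL(\cP,\omega)\leftrightarrow\cL(\cP',\omega')$, $\sigma\mapsto\sigma_1\cdots\sigma_{n-1}$. (Labelings are used up to standardization throughout, on which neither $\maj$ nor $\inv$ depends.)

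For \eqref{eq: inv q-analogue rooted trees}: under truncation $\inv(\sigma)=\inv(\sigma')+I_0$ with $I_0:=\#\{x\in\cP':\omega(x)>\omega(r)\}$ a constant independent of $\sigma'$, so $e_q^\inv(\cP,\omega)=q^{I_0}e_q^\inv(\cP',\omega')$. Partitioning the inversions of $(\cP,\omega)$ by their larger element gives $\inv(\cP,\omega)=I_0+\sum_i\inv(\cP_i,\omega_i)$, and regularity restricts cleanly to each $\cP_i$. Regularity of $\omega$ also forces the label sets $\omega(\cP_1),\ldots,\omega(\cP_k)$ to partition $\omega(\cP')$ into disjoint intervals: if some $y\in\cP_j$ with $j\neq i$ had $\omega(y)$ strictly between $\omega(x)$ and $\omega(r_i)$ for some $x\in\cP_i$, then regularity applied to $x<_\cP r_i$ would require $x<_\cP y$ or $y<_\cP r_i$, both impossible. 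After reindexing so that $\omega(\cP_i)<\omega(\cP_{i+1})$ elementwise, Proposition~\ref{prop:qdjsum-inv} gives
\[
e_q^\inv(\cP',\omega')=\qbin{n-1}{n_1,\ldots,n_k}{q}\prod_{i}e_q^\inv(\cP_i,\omega_i),
\]
and substituting the inductive hypothesis for each factor yields \eqref{eq: inv q-analogue rooted trees} after collecting $q$-factorials.

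For \eqref{eq: maj q-analogue rooted trees}: the tree structure gives $\Des(\cP,\omega)=\bigsqcup_i\Des(\cP_i,\omega_i)\sqcup\{r_j:\omega(r_j)>\omega(r)\}$, so $\maj(\cP,\omega)=\sum_i\maj(\cP_i,\omega_i)+D_0$ with $D_0:=\sum_{j:\omega(r_j)>\omega(r)}n_j$. Proposition~\ref{prop:qdjsum-maj} carries no label-ordering hypothesis, so
\[
e_q^\maj(\cP',\omega')=\qbin{n-1}{n_1,\ldots,n_k}{q}\prod_{i}e_q^\maj(\cP_i,\omega_i).
\]
Feeding in the inductive hypothesis, the desired formula reduces to the identity
\[
e_q^\maj(\cP,\omega)=q^{D_0}\cdot e_q^\maj(\cP',\omega').
\]
This is the main obstacle. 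Under truncation $\maj(\sigma)=\maj(\sigma')+(n-1)[\sigma'_{n-1}>\omega(r)]$ depends on $\sigma'$, so the identity is \emph{not} term-by-term (unlike the inv case). A natural route is to first verify the formula for a natural labeling $\omega_0$, where $\omega_0(r)=n$ makes $\sigma'_{n-1}<\omega_0(r)$ automatic so that the identity becomes trivial and the induction proceeds cleanly, and then to transport to arbitrary $\omega$ by showing that the ratio $e_q^\maj(\cP,\omega)/q^{\maj(\cP,\omega)}$ is invariant under a sequence of local label modifications carrying $\omega$ to $\omega_0$. Proving this invariance step by step is the technical heart of the argument and amounts to matching $q^\maj$-distributions over the two linear-extension sets produced by each modification.
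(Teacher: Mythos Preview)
The paper does not supply a proof of this theorem; it is quoted as a known result of Bj\"orner and Wachs and used as a black box. So there is no paper proof to compare against, and I assess your proposal on its own.

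Your argument for \eqref{eq: inv q-analogue rooted trees} is essentially correct and complete. The crux is that regularity forces the label sets $\omega(\cP_1),\ldots,\omega(\cP_k)$ to be pairwise non-interleaving, so that Proposition~\ref{prop:qdjsum-inv} applies to $\cP'=\cP_1+\cdots+\cP_k$; your chain argument via $x<_\cP r_i$ handles this (both orders of $\omega(x)$ relative to $\omega(r_i)$ are covered by the two clauses in the definition of regularity). The inversion bookkeeping $\inv(\sigma)=\inv(\sigma')+I_0$ and $\inv(\cP,\omega)=I_0+\sum_i\inv(\cP_i,\omega_i)$, together with the hook-product factorization, then close the induction cleanly.

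For \eqref{eq: maj q-analogue rooted trees} there is a genuine gap. You correctly reduce the inductive step to the identity $e_q^{\maj}(\cP,\omega)=q^{D_0}\,e_q^{\maj}(\cP',\omega')$ and observe that it is immediate when $\omega$ is natural (so $\omega(r)=n$). But for arbitrary $\omega$ you only \emph{outline} a strategy---reduce to the natural case by a chain of adjacent label swaps, checking at each step that the ratio $e_q^{\maj}(\cP,\omega)/q^{\maj(\cP,\omega)}$ is preserved---without executing it. That reduction is precisely the nontrivial content: for each swap of labels $i,i{+}1$ on incomparable elements one must exhibit a bijection on $\cL(\cP,\omega)$ shifting $\maj$ by the predicted amount, and this combinatorial lemma is not supplied. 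A cleaner route, which the paper itself alludes to after Definition~\ref{def:maj-inv-qanalog}, is to invoke Stanley's $(\cP,\omega)$-partition theory: the generating function $e_q^{\maj}(\cP,\omega)(1-q)^n/[n]_q!$ depends on $\omega$ only through $\Des(\cP,\omega)$, and for a rooted tree it factors edge-by-edge, yielding the hook product and the power $q^{\maj(\cP,\omega)}$ directly. Either way, as written your proof of \eqref{eq: maj q-analogue rooted trees} stops exactly at the point where the real work begins.
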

Note that if $\omega$ were a natural labeling, then $\inv(\cP,\omega) = 0$ in the first formula, which removes a power of $q$. 

We also have a $q$-analogue of the ribbon poset formula in Theorem~\ref{prop:ribbon}, due to Stanley.

\begin{proposition}[{Stanley \cite[Example 2.2.5]{EC1}, \cite[Corollary 3.2]{Stanqdes}}]\label{prop:stanleyqribbon}
Let $(\cZ_S^{(n)},\omega)$ be a labeled poset with $\cZ_S^{(n)}$ a ribbon poset and $\omega$ a natural labeling. Then
\[
e_q^{\inv}(\cZ^{(n)}_S,\,\omega) 
 \, = \, [n]_q! \cdot \det\left(\frac{1}{[s_{j+1}-s_i]_q!}\right)_{0\leq i,j\leq k},
\]
where $s_0=0$ and $s_{k+1} = n$.
\end{proposition}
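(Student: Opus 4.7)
The plan is to adapt the inclusion--exclusion proof of Theorem~\ref{prop:ribbon} to the $q^{\inv}$ setting and then convert the resulting alternating sum into a determinant via Stanley's Lemma~\ref{lemma:det}. The first step is to identify
$$e_q^{\inv}(\cZ^{(n)}_S, \omega) \;=\; \sum_{\substack{\sigma \in \mathfrak{S}_n \\ \Des(\sigma) = S}} q^{\inv(\sigma)},$$
using the bijection between linear extensions of the naturally labeled ribbon $(\cZ^{(n)}_S, \omega)$ and permutations of $[n]$ with descent set exactly $S$ that underlies Theorem~\ref{prop:ribbon}.

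Next, I would apply inclusion--exclusion on the descent set:
$$\sum_{\Des(\sigma) = S} q^{\inv(\sigma)} \;=\; \sum_{T \subseteq S} (-1)^{|S \setminus T|} \sum_{\Des(\sigma) \subseteq T} q^{\inv(\sigma)}.$$
For any $T = \{t_1 < \cdots < t_\ell\} \subseteq [n-1]$, permutations with $\Des(\sigma) \subseteq T$ are precisely the shuffles of $\ell + 1$ increasing runs of lengths $t_1, t_2 - t_1, \ldots, n - t_\ell$, and the standard shuffle identity for inversions gives
$$\sum_{\Des(\sigma) \subseteq T} q^{\inv(\sigma)} \;=\; \qbin{n}{t_1,\, t_2 - t_1,\, \ldots,\, n - t_\ell}{q} \;=\; \frac{[n]_q!}{[t_1]_q!\,[t_2-t_1]_q! \cdots [n-t_\ell]_q!}.$$

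Writing $T = \{s_{i_1} < \cdots < s_{i_\ell}\} \subseteq S$ with the convention $i_0 := 0$ and $i_{\ell+1} := k+1$ (so that $s_{i_0} = 0$ and $s_{i_{\ell+1}} = n$), and factoring out $[n]_q!$, we arrive at
$$e_q^{\inv}(\cZ^{(n)}_S, \omega) \;=\; [n]_q! \sum_{1 \le i_1 < \cdots < i_\ell \le k} (-1)^{k - \ell} \prod_{r=0}^{\ell} \frac{1}{[s_{i_{r+1}} - s_{i_r}]_q!}.$$
Finally, applying Lemma~\ref{lemma:det} with $g(i, j) := 1/[s_j - s_i]_q!$ for $i \le j$ and $g(i, j) := 0$ for $j < i$ (noting that $g(i, i) = 1/[0]_q! = 1$) identifies the alternating sum above with $D_k$, and the lemma yields $D_k = \det\!\left(1/[s_{j+1} - s_i]_q!\right)_{0 \le i, j \le k}$, giving the claimed formula.

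The main obstacle is the $q^{\inv}$-shuffle identity in the second step: unlike its $q^{\maj}$-counterpart, which follows directly from Stanley's $P$-partition theory, the $\inv$-version must be argued by other means -- for instance, by induction on $n$ using the recursion $\qbin{n}{k}{q} = \qbin{n-1}{k-1}{q} + q^k \qbin{n-1}{k}{q}$ applied to the position of $n$ inside each shuffle, or via an explicit description of the inversions of a shuffle in terms of the underlying block decomposition. Once this identity is established, the remainder of the argument is a direct application of Stanley's lemma.
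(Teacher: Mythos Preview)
The paper does not supply its own proof of this proposition; it is quoted from Stanley with citations.  Your outline from the second paragraph onward---inclusion--exclusion on the descent set, the $q$-multinomial shuffle identity for $\inv$, and Stanley's Lemma~\ref{lemma:det}---is the standard argument and correctly establishes
\[
\sum_{\substack{\sigma\in\mathfrak S_n\\ \Des(\sigma)=S}} q^{\inv(\sigma)}
\;=\;[n]_q!\cdot\det\!\left(\frac{1}{[s_{j+1}-s_i]_q!}\right)_{0\le i,j\le k}.
\]
In the language of the paper, this is exactly what one obtains by specializing Corollary~\ref{cor:alt} and Proposition~\ref{prop:qdjsum-inv} (which \emph{is} the shuffle identity you need) to a ribbon and then invoking Lemma~\ref{lemma:det}; compare the skeleton of Lemma~\ref{thm:qdet-inv}.

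There is, however, a real gap in your first step.  For an arbitrary natural labeling $\omega$, the set $\cL(\cZ_S^{(n)},\omega)$ is \emph{not} equal to $\{\sigma:\Des(\sigma)=S\}$.  Take $n=3$, $S=\{1\}$: the poset is $z_1>z_2<z_3$, every natural labeling has $\omega(z_2)=1$, and one finds $\cL(\cZ,\omega)=\{123,132\}$ with $e_q^{\inv}=1+q$, whereas the determinant evaluates to $q+q^2$.  The labeling for which the identification does work is $\omega(z_i)=i$: then $\cL(\cZ,\omega)=\{\sigma:\Des(\sigma^{-1})=S\}$, and the link to $\sum_{\Des(\tau)=S}q^{\inv(\tau)}$ goes through via $\inv(\sigma)=\inv(\sigma^{-1})$.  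This labeling is not natural in general, but it is a $\sigma$-partitioned regular labeling in the sense of Definition~\ref{def:prl}, which is precisely the hypothesis the paper's own machinery (Lemma~\ref{thm:qdet-inv}, Theorem~\ref{thm:qmobiledetdone}) requires.  So your plan is sound, but the bridge between $e_q^{\inv}(\cZ_S^{(n)},\omega)$ and the descent-set sum needs to be argued with the correct labeling, and the proposition as stated is itself imprecise on exactly this point.
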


Lastly, Peterson and Proctor showed the following $q$-analogue of Theorem~\ref{thm:dhook} and generalization of \eqref{eq: maj q-analogue rooted trees}. See \cite{d_comp} for a recent proof by Kim and Yoo using {\em $q$-integrals}. 
 
\begin{theorem}[{Peterson--Proctor \cite{ProHLF}}] \label{lemma:qhookmaj d complete}
    Let $(\cP,\omega)$ be a labeled poset with $\cP$ a $d$-complete poset having $n$ elements.  Then
    \[e_q^{\maj}(\cP,\omega) = q^{\maj(\cP,\,\omega)} \frac{[n]_q!}{\prod_{x\in \cP}[h_\cP(x)]_q}.\]
\end{theorem}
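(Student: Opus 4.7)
The plan is to prove Peterson--Proctor's $q$-hook length formula in two stages: first reduce the general labeling case to a natural labeling via the theory of $(\cP,\omega)$-partitions, then handle the natural case by induction on the slant-sum decomposition of connected $d$-complete posets guaranteed by Proposition~\ref{prop: slant sum tree d complete}.

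For the first stage, I would invoke Stanley's fundamental identity for $(\cP,\omega)$-partitions,
\[
\sum_{\pi} q^{|\pi|} \;=\; \frac{e_q^{\maj}(\cP,\omega)}{(1-q)(1-q^2)\cdots(1-q^n)},
\]
where the sum is over $(\cP,\omega)$-partitions $\pi$ weighted by $|\pi|$. This recasts the identity as the generating-function statement
\[
\sum_{\pi} q^{|\pi|} \;=\; \frac{q^{\maj(\cP,\omega)}}{\prod_{x\in\cP}\bigl(1-q^{h_\cP(x)}\bigr)}.
\]
A standard shifting argument on the strict/weak descents of $\omega$ then shows that both sides transform uniformly under a change of labeling, so it suffices to treat the natural labeling case where $\maj(\cP,\omega)=0$ and $\Des(\cP,\omega)=\emptyset$.

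For the second stage, I would argue by induction on $|\cP|$. The base case, when $\cP$ is a chain, reduces both sides to $\prod_{i=1}^{n}(1-q^i)^{-1}$. For the inductive step, Proctor's structure theory shows that a non-chain connected $d$-complete poset decomposes as a nontrivial slant sum $\cP=\cP_1\,{}^y\!\backslash_x\,\cP_2$ with $y$ acyclic in $\cP_1$ and $x$ the maximum of $\cP_2$. I would then encode each $(\cP,\omega)$-partition as a compatible pair of a $(\cP_1,\omega_1)$-partition and a $(\cP_2,\omega_2)$-partition subject to the inequality $\pi(x)\le\pi(y)$ introduced by the slant sum, combine the inductive hypothesis with a $q$-analogue of Proposition~\ref{prop:disjointsum} (e.g., Proposition~\ref{prop:qdjsum-maj}) to add the partitions together, and verify the resulting product against $\prod_{x\in\cP}(1-q^{h_\cP(x)})^{-1}$ using the recursive hook formula in Definition~\ref{def:dcomphook}.

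The main obstacle is the recursive character of hook lengths at neck elements: when $z$ lies in the neck of a $d_k$-interval, the hook length is defined by $h_\cP(z)=h_\cP(x)+h_\cP(y)-h_\cP(w)$ rather than by counting elements below $z$. Consequently, attaching $\cP_2$ to $\cP_1$ at an acyclic $y$ can alter $h_\cP(z)$ for those $z\in\cP_1$ whose hooks pass through newly created diamonds, and one must show that these shifted hook lengths still reassemble into a clean product compatible with the bijective decomposition of $(\cP,\omega)$-partitions. An elegant alternative, highlighted in the excerpt, is the $q$-integral approach of Kim and Yoo, in which one iterates $q$-integration over a chain polytope of $\cP$ and the hook-length product arises automatically as the Jacobian of a sequence of unimodular changes of variables.
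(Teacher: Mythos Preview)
The paper does not prove this theorem; it is quoted from the literature (Peterson--Proctor, with the remark that Kim and Yoo have recently given a proof via $q$-integrals).  So there is no ``paper's own proof'' to compare against.

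That said, your proposal has a genuine gap.  Your inductive step rests on the claim that every connected $d$-complete poset which is not a chain decomposes as a nontrivial slant sum $\cP_1\,{}^y\!\backslash_x\,\cP_2$.  Proposition~\ref{prop: slant sum tree d complete} only gives the forward direction: a slant sum of $d$-complete posets at an acyclic element is again $d$-complete.  It does \emph{not} assert, and it is not true, that every $d$-complete poset arises this way from smaller pieces.  Proctor's classification shows that connected $d$-complete posets are built by slant sums from a list of \emph{slant-irreducible} $d$-complete posets (the fifteen families in \cite{ProClass}), which include double-tailed diamonds, shapes, shifted shapes, swivels, and so on---none of which are chains.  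Your base case therefore must handle all of these irreducible families, and establishing the $q$-hook formula for each of them is precisely the hard core of the theorem; the slant-sum induction is the easy wrapper around it.

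Even in the inductive step you would need more care than you indicate: attaching $\cP_2$ below an acyclic $y\in\cP_1$ changes $h_\cP(z)$ for every $z\ge_\cP y$ (their down-sets grow by $|\cP_2|$), and you must check that the ratio of the old and new hook products matches the generating-function identity coming from the constraint $\pi(x)\le\pi(y)$.  This is tractable, but it is not simply Proposition~\ref{prop:qdjsum-maj}, which treats a disjoint union rather than a slant sum.  If you want a complete argument along these lines, you should take the slant-irreducible posets as your base cases and supply separate proofs for each family (or follow the Kim--Yoo $q$-integral route you mention, which handles the classification uniformly).
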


\begin{remark}\label{rem:noinv}
 After searching the literature, the authors were unable to find an analogous result for $e_q^{\inv}(\cP,\omega)$ when $\cP$ is a $d$-complete poset.
\end{remark}

\section{Inclusion-exclusion and determinant formulas for counting linear extensions}\label{sec:folding}

\subsection{Folding and an alternating formula for linear extensions}

We begin with a simple inclusion-exclusion formula for $e(\cP)$.

\begin{definition}
Let $\cP$ be a poset, $F \subset\ \lessdot_\cP$, and $F^{\op} := \{(y,x) \in \cP^2 \mid (x,y) \in F\}$. We write $\cP \ominus F$ for the poset with the same underlying set as $\cP$, but with cover relations $\lessdot_{\cP\setminus F}\ :=\ \lessdot_\cP\!\! \setminus F$.  We call a {\em fold} of $\cP$ at $F$ the poset
\[
\cP_F := (\cP \ominus F) \oplus F^{\op}
\]
obtained by deleting the cover relations in $F$, adding the opposite cover relations, and taking the transitive closure. If $S\subset F$, then we call a {\em partial fold} of $\cP$ at $S$ the poset
\[
\cP_{S,F} := (\cP \ominus F) \oplus S^{\op}.
\]
\end{definition}

The next lemma uses inclusion-exclusion to describe how the number of linear extensions of a poset changes when folding at a single cover relation.

\begin{lemma}\label{lemma:inclusionexclusion}
Let $\cP$ be a poset and $(x,y)$ be in $\lessdot_{\cP}$. Then
\[
e(\cP) = e(\cP \ominus \{(x,y)\}) - e(\cP_{\{(x,y)\}}).
\]
\end{lemma}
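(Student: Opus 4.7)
The plan is to partition the set $\cL(\cP \ominus \{(x,y)\})$ into two disjoint subsets according to the relative order of $x$ and $y$ under a linear extension $f$, and then to identify these subsets with $\cL(\cP)$ and $\cL(\cP_{\{(x,y)\}})$ respectively. The claimed identity will then follow by rearranging $e(\cP \ominus \{(x,y)\}) = e(\cP) + e(\cP_{\{(x,y)\}})$.

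First I would observe that because $(x,y) \in\ \lessdot_\cP$, there is no alternate chain of cover relations from $x$ to $y$ in $\cP$: such a chain of length at least two would produce an element $z$ with $x <_\cP z <_\cP y$, contradicting that $(x,y)$ is a cover. Hence $x$ and $y$ are incomparable in $\cP \ominus \{(x,y)\}$, which both guarantees that $\cP_{\{(x,y)\}} = (\cP \ominus \{(x,y)\}) \oplus \{(y,x)\}$ is a valid poset and ensures that every $f \in \cL(\cP \ominus \{(x,y)\})$ satisfies exactly one of $f(x)<f(y)$ or $f(y)<f(x)$.

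Next I would verify the two set identifications. If $f \in \cL(\cP \ominus \{(x,y)\})$ with $f(x) < f(y)$, then $f$ respects every cover relation of $\cP \ominus \{(x,y)\}$ together with the removed cover $x \lessdot_\cP y$, so $f$ respects all cover relations of $\cP$ and lies in $\cL(\cP)$; the converse inclusion is immediate. Analogously, $f \in \cL(\cP \ominus \{(x,y)\})$ satisfies $f(y) < f(x)$ if and only if $f$ respects the cover relations of $\cP_{\{(x,y)\}}$, i.e., $f \in \cL(\cP_{\{(x,y)\}})$. Summing cardinalities gives $e(\cP \ominus \{(x,y)\}) = e(\cP) + e(\cP_{\{(x,y)\}})$, and rearranging yields the lemma.

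There is no substantive obstacle here; the only delicate point is that $(x,y)$ is specifically a \emph{cover} relation rather than an arbitrary comparison, which is what simultaneously forces $x$ and $y$ to become incomparable in $\cP \ominus \{(x,y)\}$ and lets us assert that $\cP_{\{(x,y)\}}$ is well-defined as a poset.
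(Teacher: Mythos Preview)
Your proof is correct and follows essentially the same approach as the paper: both arguments partition $\cL(\cP \ominus \{(x,y)\})$ according to the relative order of $x$ and $y$, then identify the two parts with $\cL(\cP)$ and $\cL(\cP_{\{(x,y)\}})$ respectively. You are slightly more explicit than the paper in checking that $x$ and $y$ become incomparable after deleting the cover relation, which is a welcome clarification but not a different method.
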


\begin{proof}
The linear extensions of $\cP$ are also linear extensions of $\cP \ominus \{(x,y)\}$. The linear extensions of the latter that are not linear extensions of the former are exactly those extensions where $y$ appears before $x$. Such linear extensions are in bijection with the linear extensions of the poset $(\cP \ominus \{(x,y)\}) \oplus \{(y,x)\} = \cP_{\{(x,y)\}}$. Thus, as permutations of the same underlying set $\cP$, we have 
\begin{equation} \label{linext IE decomposition}
\cL(\cP) = \cL(\cP \ominus \{(x,y)\}) \,\setminus\, \cL(\cP_{\{(x,y)\}}).
\end{equation}
The result follows by calculating the cardinality of each of these sets.
\end{proof}

\begin{example} \label{ex:poset1fold}
Consider the seven element poset $\mathcal{P}$ in Figure~\ref{fig:ex-ie-posets}: Left.  Choosing either the cover relation $(c,e)$ or $(a,c)$, we obtain
\begin{align}
    77= e(\cP) &=  e(\cP \ominus \{(c,e)\}) - e(\cP_{\{(c,e)\}}) = 105-28, \label{eq:foldbridge}\\
    &= e(\cP \ominus \{(a,c)\}) - e(\cP_{\{(a,c)\}}) = 117-40. \label{eq:foldnonbridge}
\end{align}
See Figure~\ref{fig:ex-ie-posets}: Right for an illustration of these inclusion-exclusion formulas.
\renewcommand{\qedsymbol}{\rule{0.65em}{0.65em}}\hfill\qedsymbol
\begin{figure}
\begin{center}
    \raisebox{10mm}{\raisebox{3mm}{$\cP =$} \includegraphics[scale=0.6]{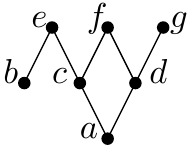}} \qquad \qquad 
   \includegraphics[scale=0.6]{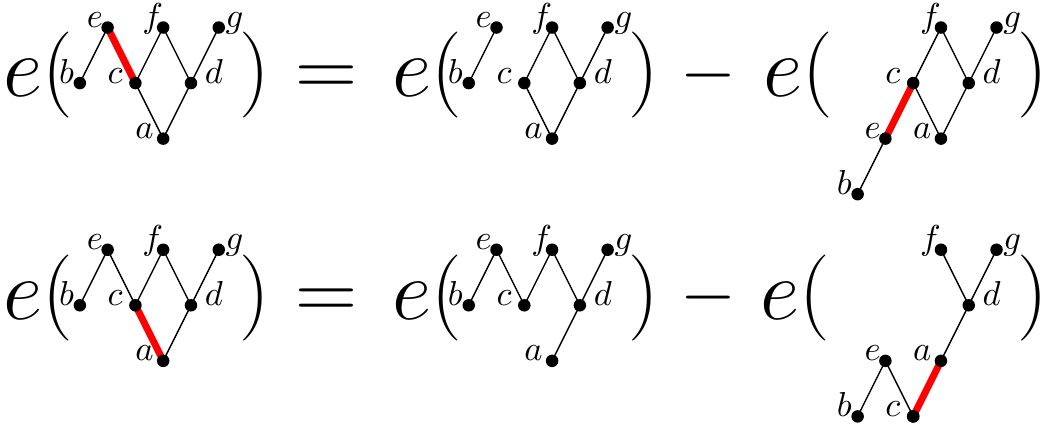}
\end{center}
\caption{Example of using the inclusion-exclusion formula from Lemma~\ref{lemma:inclusionexclusion} to calculate $e(\cP)$.}
\label{fig:ex-ie-posets}
\end{figure}
\end{example}

\begin{remark} \label{rem:delete-bridge-fold}
Note from \eqref{eq:foldbridge} that if the cover relation $(x,y)$ is a bridge (that is, it does not belong to a cycle in the Hasse diagram of $\cP$), then the Hasse diagram of the folded poset $\cP_{\{(x,y)\}}$ is obtained by simply folding the cover relation $(x,y)$ in the Hasse diagram of $\cP$. If the cover relation belongs to a cycle of the Hasse diagram of $\cP$, as in \eqref{eq:foldnonbridge}, then the change in the Hasse diagram when forming the folded poset $\cP_{\{(x,y)\}}$ is more significant. 
\end{remark}

\begin{corollary}\label{corollary:alternating}
Let $\cP$ be a poset, and let $F \subset\ \lessdot_\cP$.  Then
\begin{equation} \label{equation:alt}
e(\cP) = \sum_{S\subset F}(-1)^{\#S} e(\cP_{S,F}).
\end{equation}
\end{corollary}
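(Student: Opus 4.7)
My plan is to iterate Lemma~\ref{lemma:inclusionexclusion} at the level of sets of linear extensions and then invert the resulting identity via Möbius inversion on the Boolean lattice of subsets of $F$. The key observation, implicit in the proof of Lemma~\ref{lemma:inclusionexclusion} (equation~\eqref{linext IE decomposition}), is that any permutation of $\cP$ respecting the covers in $\lessdot_\cP \setminus F$ places each pair $(x,y) \in F$ in one of two orders. Thus, for each $T \subset F$, letting
\[
L_T := \{\sigma \in \cL(\cP \ominus F) \mid \sigma \text{ reverses exactly the cover relations in } T\},
\]
I obtain a disjoint decomposition $\cL(\cP \ominus F) = \bigsqcup_{T \subset F} L_T$, together with the identification $L_\emptyset = \cL(\cP)$, since a permutation reversing no cover in $F$ respects all of $\lessdot_\cP$.

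The next step is to reinterpret each $\cL(\cP_{S,F})$ in terms of the sets $L_T$. A linear extension of $\cP_{S,F} = (\cP \ominus F) \oplus S^{\op}$ is a permutation respecting both $\lessdot_\cP \setminus F$ and $S^{\op}$; equivalently, one respecting $\lessdot_\cP \setminus F$ and reversing every cover in $S$, with no constraint on $F \setminus S$. This gives
\[
\cL(\cP_{S,F}) = \bigsqcup_{T : S \subset T \subset F} L_T, \qquad\text{so}\qquad e(\cP_{S,F}) = \sum_{T \supset S} |L_T|.
\]
Möbius inversion on the Boolean lattice of subsets of $F$ inverts this to $|L_S| = \sum_{T \supset S} (-1)^{|T|-|S|} e(\cP_{T,F})$, and specializing $S = \emptyset$ yields the desired identity $e(\cP) = |L_\emptyset| = \sum_{T \subset F} (-1)^{|T|} e(\cP_{T,F})$.

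The only technical wrinkle I anticipate is handling degenerate subsets $S$ for which the transitive closure used to define $\cP_{S,F}$ contains a cycle, so that $\cP_{S,F}$ is not literally a poset. In that case both $\cL(\cP_{S,F})$ and $\bigsqcup_{T \supset S} L_T$ are empty, and the argument still goes through with the natural convention $e(\cP_{S,F}) = 0$. I expect this to be a routine check rather than the main obstacle; the substantive content is the refinement $\cL(\cP \ominus F) = \bigsqcup_{T \subset F} L_T$, which is essentially an iterated application of the dichotomy used in Lemma~\ref{lemma:inclusionexclusion}.
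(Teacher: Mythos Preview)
Your argument is correct. The paper's own proof is the one-liner ``the result follows from repeated application of Lemma~\ref{lemma:inclusionexclusion}'', i.e., peel off one cover relation of $F$ at a time and induct. Your route is a direct version of the same inclusion--exclusion: you partition $\cL(\cP\ominus F)$ in one shot by the subset $T\subset F$ of reversed covers, identify $e(\cP_{S,F})=\sum_{T\supset S}|L_T|$, and then M\"obius-invert on the Boolean lattice. The content is the same, but your presentation buys two small things: it makes explicit the degenerate case where $(\cP\ominus F)\oplus S^{\op}$ acquires a cycle (both sides are then zero, as you note), and it sidesteps the bookkeeping of checking that each remaining element of $F$ is still a cover relation after earlier folds, which a literal ``repeated application'' of Lemma~\ref{lemma:inclusionexclusion} would require.
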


\begin{proof}
The result follows from repeated application of Lemma~\ref{lemma:inclusionexclusion}.
\end{proof}

\begin{example}\label{ex:poset2folds}
 For the poset $\cP$ from Example~\ref{ex:poset1fold}, Corollary~\ref{corollary:alternating} yields the formula
 \begin{equation} \label{eq:example-inclusion-exclusion-2folds}
      \raisebox{-20pt}{\includegraphics[scale=0.6]{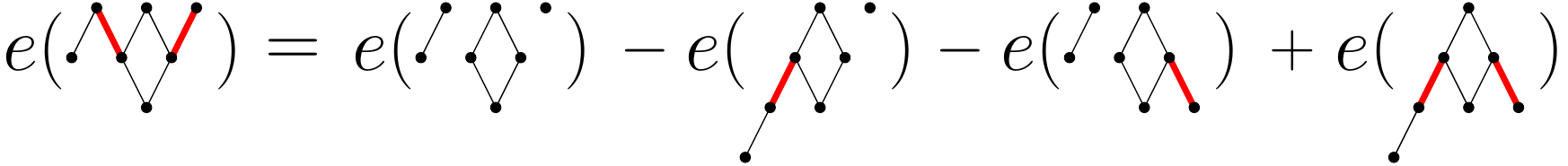}}  
\end{equation}
 when $F=\{(c,e),(d,g)\}$ is the set of cover relations depicted in red on the left-hand side of the above equation.\renewcommand{\qedsymbol}{\rule{0.65em}{0.65em}}\hfill\qedsymbol
\end{example}

\subsection{Component trees and component arrays}
\label{section:detie}

For the rest of the paper, we assume that $\cP$ is connected\footnote{Given a disconnected poset $\cP$, one can first apply the techniques of this paper to each connected component separately.  This will compute the number of linear extensions of each connected component.  Then, one can use Proposition~\ref{prop:disjointsum} to compute $e(\cP)$.}  and that the set of folds $F$ consists only of {\em bridges} in the Hasse diagram of $\cP$; that is, the cover relations that we fold do not lie in a cycle in the Hasse diagram of $\cP$ (see Remark~\ref{rem:delete-bridge-fold}).  In this case, the Hasse diagram of $\cP_{S,F}$ is obtained from the Hasse diagram of $\cP$ by folding the edges in $S$ and deleting the edges in $F\setminus S$. 

\begin{definition}
We define the {\em component tree} of $\cP_F$ to be the tree $C(\cP_F)$ with vertices the connected components of the poset $\cP \ominus F$, and with edges $(C_x, C_y)$ for each cover relation $(x,y)$ in $F$ where $x \in C_x$ and $y\in C_y$. 
\end{definition}

\begin{definition} \label{def: component array}
Suppose $\#F = k$ and $\sigma = (\sigma_0,\sigma_1,\ldots,\sigma_k)$ is a total order on the vertices of $C(\cP_F)$.  The \emph{component array} $M_\sigma(\cP_F)$ is the triangular array of posets
\[
(M_\sigma(\cP_F))_{i,j} := C(\cP_F)[i,j],
\]
where  $0\leq i\leq j \leq k$ and $C(\cP_F)[i,j]$ is the subposet of $\cP_F$ on the elements in the connected components $\sigma_i,\sigma_{i+1},\ldots, \sigma_j$ of $\cP \ominus F$.
\end{definition}

\begin{definition}\label{def:pathorder}
Let $C(\cP_F)$ be the component tree of a folded poset $\cP_F$.  A total order $\sigma = (\sigma_0,\sigma_1,\ldots,\sigma_k)$ on the vertices of $C(\cP_F)$ is called a \emph{path order}\footnote{The choice of this name will be justified in Proposition~\ref{prop:goodordering} and Remark~\ref{rem: path order}.} if each entry of the component array $M_\sigma(\cP_F)$ is a connected poset.
\end{definition}

\begin{example}\label{ex:nonzig3}
For the poset $\cP$ and folds $F$ from Example~\ref{ex:poset2folds}, Figure~\ref{fig: component tree and array} depicts the component tree and component arrays for a path order $\sigma=(\sigma_0,\sigma_1,\sigma_2)$ and an order $\tau=(\sigma_1,\sigma_0,\sigma_2)$ that is not a path order.\renewcommand{\qedsymbol}{\rule{0.65em}{0.65em}}\hfill\qedsymbol 

\begin{figure}
\begin{center}
\raisebox{22pt}{\includegraphics[scale=0.8]{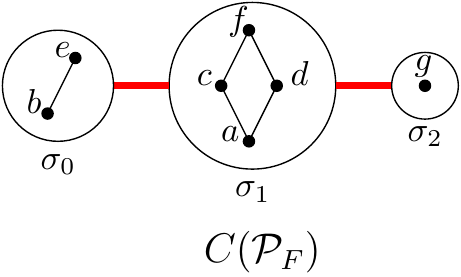}} \qquad \,\,  \includegraphics[scale=0.8]{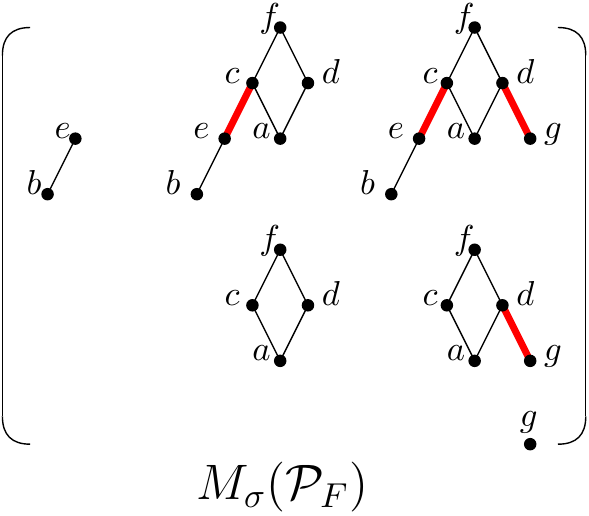} \qquad \,\, 
\includegraphics[scale=0.8]{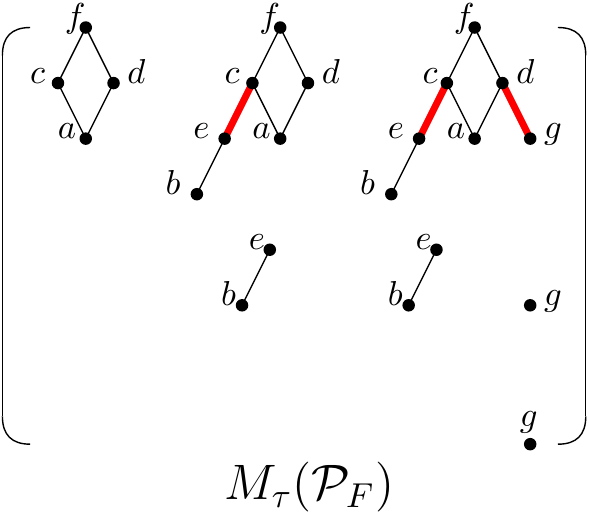}
\end{center}
\caption{Left: example of component tree $C(\cP_F)$ with a chosen total order $\sigma$ on the vertices. Center: example of component array $M_{\sigma}(\cP_F)$. Right: example of a component array $M_{\tau}(\cP_F)$ with a different order $\tau=(\sigma_1,\sigma_0,\sigma_2)$.}
\label{fig: component tree and array}
\end{figure}
\end{example}

\begin{proposition}\label{prop:goodordering}
There exists a path order $\sigma$ on the vertices of $C(\cP_F)$ if and only if $C(\cP_F)$ is a path.
\end{proposition}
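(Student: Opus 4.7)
The plan is to prove both implications by relating connectedness of a subset $\{\sigma_i,\ldots,\sigma_j\}$ in the tree $C(\cP_F)$ to connectedness of the associated subposet $C(\cP_F)[i,j]$ of $\cP_F$. Before diving in, I would first record the basic observation that these two notions coincide: each vertex of $C(\cP_F)$ is itself a connected component of $\cP \ominus F$, hence is already connected internally, and the only cover relations of $\cP_F$ between elements of two distinct components come from the (possibly reversed) edges of $F$, which are exactly the edges of $C(\cP_F)$. Consequently, $C(\cP_F)[i,j]$ is a connected poset if and only if the induced subgraph of $C(\cP_F)$ on $\{\sigma_i,\ldots,\sigma_j\}$ is a connected subtree.

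For the $(\Leftarrow)$ direction, I would assume $C(\cP_F)$ is a path and let $\sigma = (\sigma_0,\sigma_1,\ldots,\sigma_k)$ be the linear ordering of its vertices along that path. Then for any $0 \le i \le j \le k$, the set $\{\sigma_i,\ldots,\sigma_j\}$ is a contiguous sub-path of $C(\cP_F)$, and in particular is connected, so by the observation above $C(\cP_F)[i,j]$ is connected. Hence $\sigma$ is a path order.

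For the $(\Rightarrow)$ direction, I would argue by contrapositive. Since $C(\cP_F)$ is a tree, if it is not a path then it contains a vertex $v$ of degree at least $3$ with three distinct neighbors $u_1, u_2, u_3$. A standard fact about trees is that removing $v$ separates $u_1, u_2, u_3$ into three different components, so the unique path in $C(\cP_F)$ between any two of them passes through $v$. Now fix any total order $\sigma$ on the vertices of $C(\cP_F)$, let $p$ be the position of $v$, and let $p_1 < p_2 < p_3$ be the positions of $u_1, u_2, u_3$ (after relabeling). A short case analysis on the location of $p$ relative to $p_1, p_2, p_3$ produces an interval $[p_\alpha,p_\beta]$ with $\alpha < \beta$ whose endpoints are two of the $u_i$ and which does not contain $p$: if $p < p_1$ or $p > p_3$, take $[p_1,p_3]$; if $p_1 < p < p_2$, take $[p_2,p_3]$; if $p_2 < p < p_3$, take $[p_1,p_2]$. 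In each case, the induced subgraph of $C(\cP_F)$ on $\{\sigma_{p_\alpha},\ldots,\sigma_{p_\beta}\}$ contains two of the $u_i$'s but not $v$, so (by the unique-path property) it is disconnected. By the preliminary observation, $C(\cP_F)[p_\alpha,p_\beta]$ is then a disconnected subposet, so $\sigma$ is not a path order.

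The main obstacle, though mild, is the bookkeeping in the case analysis for the forward direction; the genuinely substantive point is the preliminary equivalence between graph-connectedness in $C(\cP_F)$ and poset-connectedness of $C(\cP_F)[i,j]$, which is what lets us translate the tree-theoretic statement (a tree admits a linear ordering whose every contiguous block induces a connected subgraph iff the tree is a path) into the claim about path orders.
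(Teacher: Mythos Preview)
Your proof is correct and follows essentially the same approach as the paper's: both directions are argued identically, with the ``only if'' direction handled by locating a vertex of degree at least three and showing that some interval of any total order must contain two of its neighbors while omitting the vertex itself. The paper's version is slightly terser---it collapses your four-case analysis into two cases (whether or not the high-degree vertex lies in the interval $[r,s]$ between the first two neighbors)---and leaves implicit the graph/poset connectedness equivalence that you spell out as a preliminary observation, but the substance is the same.
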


\begin{proof}
We prove the ``if'' statement first, so suppose $C(\cP_F)$ is a path.  Thus, $C(\cP_F)$ has exactly two degree-one vertices.  Travel along the path from one degree-one vertex to the other, labeling the vertices along the way in the order $\sigma_0, \sigma_1, \ldots, \sigma_k$.  The total order $\sigma = (\sigma_0,\sigma_1,\ldots,\sigma_k)$ is a path order.

Now we prove the ``only if'' statement, so suppose $C(\cP_F)$ is not a path and let $\sigma$ be a total order on the vertices of $C(\cP_F)$.  Since $C(\cP_F)$ is a tree, it follows that there exists a vertex $X$ in $C(\cP_F)$ with degree at least three. Let $A,B,C$ be three of the vertices in $C(\cP_F)$ adjacent to $X$ labeled in the order they appear in $\sigma$; that is, there are indices $0\leq r<s<t\leq k$ such that $\sigma_r=A, \sigma_s=B$, and $\sigma_t=C$. Depending on where $X$ appears in $\sigma$, the poset $C(\cP_F)[r,s]$ has subposets $A$ and $B$ in different components or the poset  $C(\cP_F)[s,t]$ has subposets $B$ and $C$ in different components.  Thus, $\sigma$ is not a path order, as desired.
\end{proof}

\begin{remark} \label{rem: path order}
The result above shows that there are exactly two path orders on the vertices of $C(\cP_F)$ when it is a path: traveling from either end of the path to the other (and there are no path orders otherwise). Either of these two orders $\sigma$ gives a total order $\sigma' = (\sigma'_0,\sigma'_1,\ldots,\sigma'_{k-1})$ on $F$, where $\sigma'_i$ denotes the element of $F$ that is incident with both $\sigma_i$ and $\sigma_{i+1}$ in $C(\cP_F)$; we call such a $\sigma'$ a \emph{path order} on $F$.  Furthermore, we will often abuse notation and use $\sigma$ for both a path order on the vertices of $C(\cP_F)$ and for its corresponding path order on $F$.
\end{remark}

\subsection{Determinant formula}

Given a component array $M_\sigma(\cP_F)$ with $\sigma$ a path order on the vertices of $C(\cP_F)$, we define a matrix $\overline{e}(M_{\sigma}(\cP_F))$ by
\begin{equation} \label{def: eq matrix}
\overline{e}(M_{\sigma}(\cP_F))_{i,j} := \left\{\begin{array}{ll}0 & \text{if } j<i-1 ,\\
1 & \text{if } j = i-1,\\
\overline{e}((M_\sigma(\cP_F))_{i,j}) & \text{otherwise}, \end{array}\right.
\end{equation}
where $(i,j)\in [0,k]\times [0,k]$.  The upper-triangular entries of this matrix are the probabilities that a permutation of the elements of the poset $(M_\sigma(\cP_F))_{i,j}$ yields a linear extension. 

\begin{lemma}\label{thm:det}
Let $\cP$ be a poset with $n$ elements, and let $\sigma$ be a path order on the vertices of $C(\cP_F)$.  Then we have
\[
e(\cP) = n!\cdot\det(\overline{e}(M_{\sigma}(\cP_F))).
\]
\end{lemma}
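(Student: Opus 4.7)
The plan is to apply Corollary~\ref{corollary:alternating} and then recognize the resulting alternating sum as Stanley's determinant in Lemma~\ref{lemma:det}.

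The key structural step is a description of $\cP_{S,F}$ as a disjoint sum of entries of the component array. By Remark~\ref{rem: path order}, the path order $\sigma$ induces a total order $(\sigma'_0,\ldots,\sigma'_{k-1})$ on $F$. For $S \subset F$, set $T := F \setminus S = \{\sigma'_{t_1},\ldots,\sigma'_{t_r}\}$ with $t_1 < \cdots < t_r$, and adopt the conventions $t_0 := -1$ and $t_{r+1} := k$. I would first establish
\[
\cP_{S,F} \;=\; \sum_{\ell=0}^{r} (M_\sigma(\cP_F))_{t_\ell+1,\,t_{\ell+1}}
\]
as a disjoint sum of connected posets. Because every edge in $F$ is a bridge, deleting the edges in $T$ from the component path breaks it into the intervals $[t_\ell+1,\,t_{\ell+1}]$; all remaining reversed folds in $S$ lie within single such intervals, so the subposet of $\cP_{S,F}$ on the elements indexed by $[t_\ell+1,\,t_{\ell+1}]$ agrees with $(M_\sigma(\cP_F))_{t_\ell+1,\,t_{\ell+1}}$, which is connected because $\sigma$ is a path order.

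Using this decomposition together with Proposition~\ref{prop:disjointsum} (and $\sum_\ell n_\ell = n$), substituting into Corollary~\ref{corollary:alternating}, and noting $|S| = k-r$, I would obtain
\[
\frac{e(\cP)}{n!} \;=\; \sum_{r=0}^{k}\ \sum_{0 \le t_1 < \cdots < t_r \le k-1} (-1)^{k-r} \prod_{\ell=0}^{r} \overline{e}\bigl((M_\sigma(\cP_F))_{t_\ell+1,\,t_{\ell+1}}\bigr).
\]
Reindexing with $i_\ell := t_\ell + 1$ rewrites the inner range as $1 \le i_1 < \cdots < i_r \le k$. Defining $g(a,b) := \overline{e}((M_\sigma(\cP_F))_{a,\,b-1})$ for $a < b$, together with $g(a,a) := 1$ and $g(a,b) := 0$ for $b < a$, turns the right-hand side into the sum $D_k$ of Lemma~\ref{lemma:det}. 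That lemma identifies this with $\det(E_k)$, and comparing $e_{i,j} = g(i,\,j+1)$ with \eqref{def: eq matrix} shows $E_k = \overline{e}(M_\sigma(\cP_F))$, which completes the proof.

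The main obstacle I expect is the disjoint-sum decomposition of $\cP_{S,F}$. It uses both hypotheses in an essential way: the bridge condition guarantees that removing $T$ really does disconnect the Hasse diagram into the claimed pieces, while the path-order condition guarantees that each reconnected piece coincides with an entry of the component array and is itself connected. Once this structural claim is in hand, the remainder is routine bookkeeping that slots the inclusion-exclusion expansion into the template of Stanley's lemma.
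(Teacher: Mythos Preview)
Your proposal is correct and follows essentially the same route as the paper: apply Corollary~\ref{corollary:alternating}, decompose each $\cP_{S,F}$ as a disjoint sum of consecutive component-array entries indexed by the complement $F\setminus S$, use Proposition~\ref{prop:disjointsum}, and feed the resulting alternating sum into Lemma~\ref{lemma:det} with $g(i,j+1)=\overline{e}((M_\sigma(\cP_F))_{i,j})$. The only differences are cosmetic---you index $F\setminus S$ by $t_\ell$ (starting at $0$) and then reindex to $i_\ell=t_\ell+1$, whereas the paper introduces $\phi\colon F\to[k]$ to go directly to the $i_\ell$---and you spell out more explicitly why the bridge and path-order hypotheses are needed for the disjoint-sum decomposition.
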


\begin{proof}
We encode $\sigma$ by the function $\phi\co F \rightarrow [k], \sigma_i \mapsto i+1$.  Now use the inclusion-exclusion formula in \eqref{equation:alt} to write $e(\cP)$ as the alternating sum 
\begin{equation}
    \label{equation:alt applied to T}
e(\cP) = \sum_{S\subset F}(-1)^{\#S} e(\cP_{S,F}).
\end{equation}
Since $F$ is a set of folds and $C(\cP_F)$ is a path, the poset $\cP_{S,F}$ is the following disjoint sum of subposets of $\cP_F = C(\cP_F)[0,k]$:
\begin{equation}\label{eqn:djsum}
\cP_{S,F} = C(\cP_F)[0,i_1-1] + C(\cP_F)[i_1,i_2-1] + \cdots + C(\cP_F)[i_j,k],
\end{equation}
where $\{i_1,i_2,\ldots,i_j\}_{<}$ is the image $\phi(F\setminus S)$ ordered by $\sigma$. Then by Proposition~\ref{prop:disjointsum} and the fact that $\#\cP_{S,F} = \#\cP$, we have
\[
e(\cP_{S,F})\,=\, (\#\cP)! \cdot   \overline{e}(M_{\sigma}(\cP_F)_{0,i_1-1})\overline{e}(M_{\sigma}(\cP_F)_{i_1,i_2-1}) \cdots \overline{e}(M_{\sigma}(\cP_F)_{i_j,k}).
\]
Then  \eqref{equation:alt applied to T}  becomes
\[
e(\cP) \,=\, (\#\cP)!  \sum_{\{i_1,i_2,\ldots,i_j\}_{<} \,\subset\, [k]}(-1)^{k-j} \cdot \overline{e}(M_{\sigma}(\cP_F)_{0,i_1-1})\overline{e}(M_{\sigma}(\cP_F)_{i_1,i_2-1}) \cdots \overline{e}(M_{\sigma}(\cP_F)_{i_j,k}).
\]
The result then follows by using  Lemma~\ref{lemma:det} with the function $g(i,j+1) = \overline{e}(M_{\sigma}(\cP_F)_{i,j})$ for $0\leq i \leq j \leq k$, $g(i,i)=1$, and $g(i,j)=0$ for $j<i$.
\end{proof}

\begin{example}\label{ex:pathexplanation}
We continue with Example~\ref{ex:nonzig3}. Applying Lemma~\ref{thm:det}  to the component array $M_\sigma(\mathcal{P}_F)$ in Figure~\ref{fig: component tree and array}: gives
\[
e(\cP) = 7! \cdot \det(\overline{e}(M_{\sigma}(\cP_F))) = 7!\left( \raisebox{-15pt}{\includegraphics[scale=0.5]{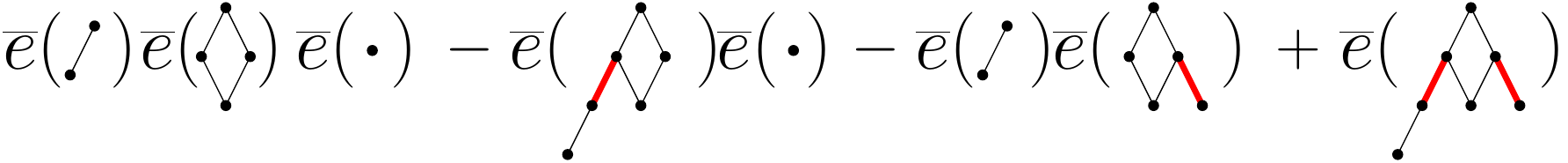}}\right),
\]
which evaluates to $77$ as expected (see Example~\ref{ex:poset1fold}).  The displayed expression matches the right-hand side of \eqref{eq:example-inclusion-exclusion-2folds} after applying Proposition~\ref{prop:disjointsum}.

Note that if instead we use the component array $M_{\tau}(\cP_F)$ from the order $\tau=(\sigma_1,\sigma_0,\sigma_2)$, which is not a path order (Figure~\ref{fig: component tree and array}: Right), then expanding $7!\cdot \det(\overline{e}(M_{\tau}(\cP_F)))$ does not match the right-hand side of \eqref{eq:example-inclusion-exclusion-2folds} nor give $e(\cP)$.\renewcommand{\qedsymbol}{\rule{0.65em}{0.65em}}\hfill\qedsymbol
\end{example}

\begin{remark}
We have chosen to work in the setting where the component array $M_\sigma(\cP_F)$ has connected poset entries (see Definition~\ref{def:pathorder}). As illustrated in Example~\ref{ex:pathexplanation} above, if we do not impose this restriction, then  Lemma~\ref{thm:det} does not necessarily hold. 
\end{remark}

\section{Determinant formulas for linear extensions of mobile posets}
\label{section:mobile}

In this section, we give the three main applications of Lemma~\ref{thm:det}.  As a warm-up, we recover the determinant formula for linear extensions of ribbons in Theorem~\ref{prop:ribbon}.  Then, we prove a determinant formula for linear extensions of {\em mobile posets} in Theorem~\ref{thm:gmobiledetdone} and use this result to prove Corollary~\ref{thm:mobiledetdone}, which gives a formula for the special case of {\em mobile tree posets}. 
 In these three formulas, the matrix entries are given by hook-length formulas of chains, of $d$-complete posets, and of rooted trees, respectively.

\subsection{Recovering determinant formula for ribbons}

The next result shows that MacMahon's determinant formula for the number of linear extensions of ribbon posets (or permutations with given descent set) can be recovered using Lemma~\ref{thm:det}.

\begin{proof}[Proof of Theorem~\ref{prop:ribbon}]
Given a ribbon poset $\cP := \cZ^{(n)}_S$ where $S=\{s_1<\cdots<s_k\}_{<} \subset [n-1]$, we choose the set of folds  $F = \{ (z_{i+1},z_i) \mid i \in S\}$.  The folded poset $\cP_F$ is a chain, so the component tree $C(\cP_F)$ is a path.  Now, Lemma~\ref{thm:det} applies to give
\[
e(\cP) = n! \cdot \det(\overline{e}(M_{\sigma}(\cP_F))),
\]
where we have chosen from the two possible path orders on the set $F$ the one that is consistent with the ordered set $S$. With this path order $\sigma$, each poset $C(\cP_F)[i,j]$ in the component array is a chain of size $s_{j+1}-s_i$, where $s_0=0$ and $s_{k+1}=n$. Since a chain has only one linear extension, it follows that $\overline{e}(C(\cP_F)[i,j]) = 1/(s_{j+1}-s_i)!$ and the desired formula follows.  
\end{proof}

\begin{example}
For the ribbon $\cP = \cZ^{(6)}_{\{3,5\}}$, we use the set of folds $F=\{(z_4,z_3), (z_6,z_5)\}$. The folded poset $\cP_F$ and the component array $M_{\sigma}(\cP_F)$ (where $\sigma$ is the path order on $F$ consistent with the ordered set $S$) are illustrated in Figure~\ref{fig:example ribbon}. Applying Lemma~\ref{thm:det} gives
\[
e(\cP) = 6! \cdot \det(\overline{e}(M_{\sigma}(\cP_F))) = 6! \cdot \det
\begin{pmatrix}
\frac{1}{3!} & \frac{1}{5!} & \frac{1}{6!} \\[6pt]
1 & \frac{1}{2!} & \frac{1}{3!} \\[6pt]
0 & 1 & \frac{1}{1!}
\end{pmatrix}
= 35,
\]
and this agrees with the formula in Theorem~\ref{prop:ribbon}. \renewcommand{\qedsymbol}{\rule{0.65em}{0.65em}}\hfill\qedsymbol
\begin{figure}
    \centering
    \includegraphics[scale=0.85]{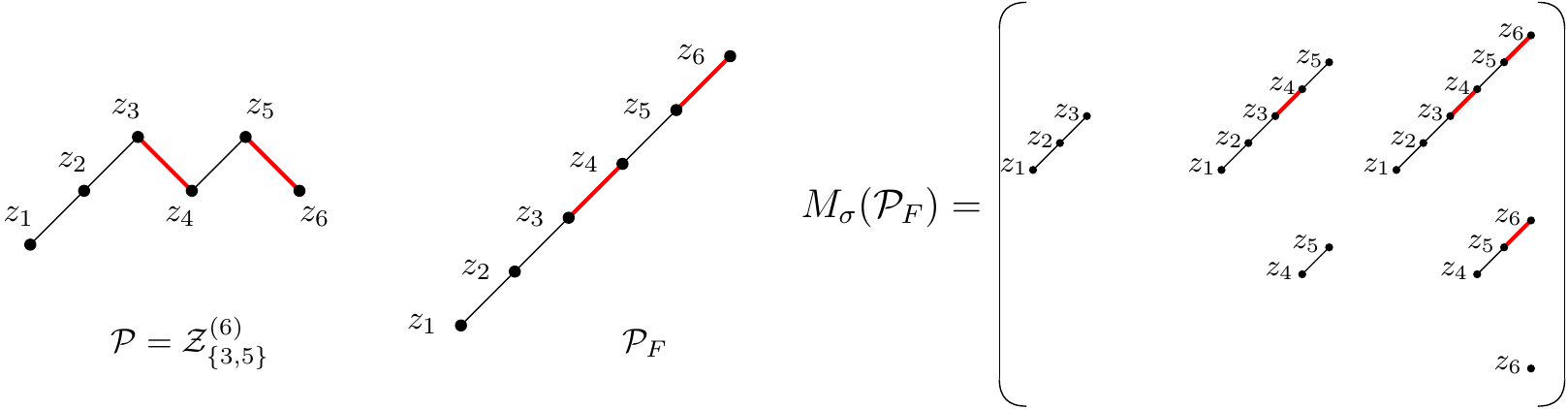}
    \caption{Left: a ribbon poset $\cP$. Center: the folded ribbon $\cP_F$. Right: its component array $M_\sigma(\cP_F)$.}
    \label{fig:example ribbon}
\end{figure}
\end{example}

\subsection{Determinant formula for mobile posets}
\label{section:new}

In this section, we use Lemma~\ref{thm:det} to give a determinant formula for the number of linear extensions of a class of posets we call ``mobile posets'', which generalize both rooted tree posets and ribbons. 

\begin{definition}\label{def:generalized mobiles}
A {\em mobile poset} $\cP$ is a poset obtained from a ribbon $\cZ$ by the following two operations:
\begin{compactitem}
\item[(i)] For every element $z \in \cZ$, perform an iterated slant sum $\cZ \underset{i=1,\ldots,m_z}{\prescript{z}{}{\big{\backslash}_{r_i}}} \cR_z^{(i)}$ with $m_z \geq 0$ connected $d$-complete posets $\cR_z^{(i)}$ with respective maximal elements $r_i$. Denote the resulting poset by $\cP'$.
\item[(ii)] For at most one element $z' \in \cZ$, perform a slant sum $\cQ_{z'} \,{}^{q}\backslash_{z'}\, \cP'$ where $\cQ_{z'}$ is a connected $d$-complete poset and $q$ is one of its acyclic elements. Such an element $z'$ is called an {\em anchor}.
\end{compactitem}
If no such element $z' \in \cZ$ is used in Operation (ii), we say that the mobile is {\em free-standing} with respect to the ribbon $\cZ$.
\end{definition}

See Figure~\ref{fig:mobile tree poset}: Left for a schematic of a mobile poset. 

\begin{example}
Figure~\ref{fig: example and nonexample of mobile} shows four examples: a free-standing mobile, a mobile, and two posets that cannot be expressed as mobiles. \renewcommand{\qedsymbol}{\rule{0.65em}{0.65em}}\hfill\qedsymbol

\begin{figure}
    \centering
    \includegraphics{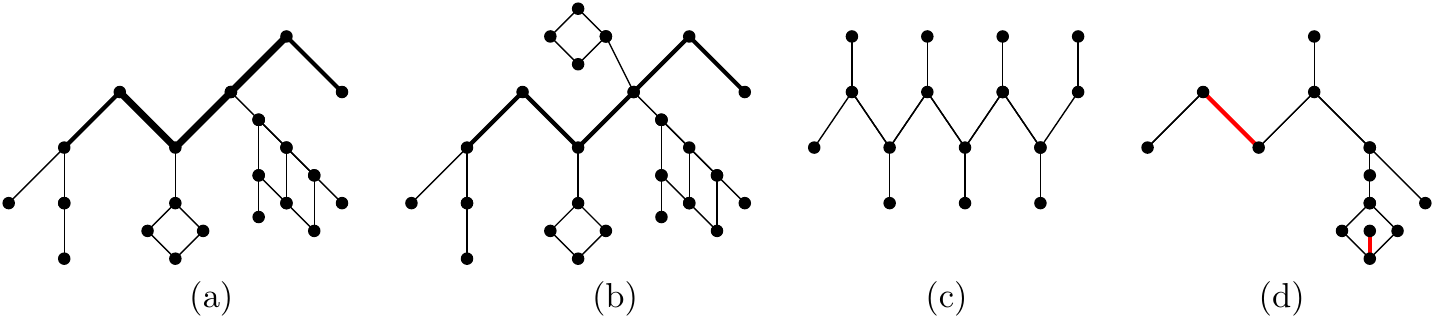}
    \caption{Examples of (a) a free-standing mobile poset, (b) a mobile poset, and (c) and (d) posets that are not mobile posets.}
    \label{fig: example and nonexample of mobile}
\end{figure}
\end{example}

\begin{remark} \label{rem:uniqueness ribbon}
 Note that, in a mobile poset there may be more than one choice of an underlying ribbon (see, for example, Figure~\ref{fig: example and nonexample of mobile}(a)). However, in a free-standing mobile poset, the possible underlying ribbons $\cZ$ that ensure the mobile is free-standing have a common unique subribbon $\cZ'$ (indicated in heavy bold in the figure) starting at the first descent of $\cZ$ and ending immediately after the last ascent of $\cZ$.
\end{remark}

Next, we show that certain mobile posets are also $d$-complete posets.

\begin{proposition} \label{prop:when-is-mobile-dcomplete}
Let $\cP$ be a mobile poset with underlying ribbon poset $\cZ$ where $\cZ$ has a unique maximal element. 
\begin{compactitem}
\item[(1)] If $\cP$ is free-standing with respect to $\cZ$ then $\cP$ is $d$-complete. 
\item[(2)] If $\cP$ is not free-standing with respect to $\cZ$ and the maximal element of $\cZ$ coincides with the anchor $z'$, then $\cP$ is $d$-complete.
\end{compactitem}
\end{proposition}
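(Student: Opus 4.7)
The plan is to build $\cP$ inductively as a sequence of single slant sums and apply Proposition~\ref{prop: slant sum tree d complete} at every step. The base case is the ribbon $\cZ$ itself: since $\cZ$ is a ribbon with a unique maximum, its Hasse diagram is a connected tree with one maximum, and hence $\cZ$ is a rooted tree poset. Rooted trees are $d$-complete, and as noted after Definition~\ref{def:dcomp}, every element of a rooted tree is acyclic. In particular, every $z \in \cZ$ is acyclic in $\cZ$.

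For part (1), I decompose Operation~(i)---the iterated slant sums at each $z \in \cZ$---into a finite sequence of single slant sums with the various $\cR_z^{(i)}$'s, and proceed by induction on this sequence. At each inductive step I invoke Proposition~\ref{prop: slant sum tree d complete} with $\cP_1$ equal to the current poset (which is $d$-complete by the inductive hypothesis), $y$ equal to the ambient element $z \in \cZ$ (which is acyclic in $\cP_1$, since Proposition~\ref{prop: slant sum tree d complete} preserves the acyclicity of elements at each prior step), $\cP_2 = \cR_z^{(i)}$ (connected $d$-complete by hypothesis), and $x = r_i$ (its maximum). The proposition then produces a $d$-complete slant sum while preserving the acyclicity of the elements of $\cZ$. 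Once all such slant sums have been performed we obtain $\cP' = \cP$, so $\cP$ is $d$-complete.

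For part (2), I first construct $\cP'$ exactly as in part (1), so $\cP'$ is $d$-complete. The key observation is that every slant sum in Operation~(i) attaches $\cR_z^{(i)}$ \emph{below} $z \in \cZ$ (the added cover relation is $r_i \lessdot z$), so no element strictly above any $z \in \cZ$ is ever introduced, and hence the unique maximum of $\cZ$ remains the unique maximum of $\cP'$. Under the hypothesis of part~(2), this maximum is the anchor $z'$. I then apply Proposition~\ref{prop: slant sum tree d complete} one more time to Operation~(ii), with $\cP_1 = \cQ_{z'}$ (connected $d$-complete by hypothesis), $y = q$ (an acyclic element of $\cQ_{z'}$ by hypothesis), $\cP_2 = \cP'$, and $x = z'$ (its maximum, as just verified). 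This yields that $\cP = \cQ_{z'} \,{}^q\backslash_{z'}\, \cP'$ is $d$-complete.

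The proof is essentially bookkeeping with Proposition~\ref{prop: slant sum tree d complete}; the main delicacy is the observation in part~(2) that $z'$ remains the global maximum of $\cP'$ after all the slant sums of Operation~(i), which is precisely why the hypothesis on the position of the anchor is needed. Without that hypothesis, the max of $\cP'$ would not generally be $z'$ and Proposition~\ref{prop: slant sum tree d complete} could not be applied to Operation~(ii).
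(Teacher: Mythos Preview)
Your proof is correct and follows essentially the same approach as the paper: both start from the rooted tree $\cZ$ and build $\cP$ by iterated applications of Proposition~\ref{prop: slant sum tree d complete}, first performing the slant sums of Operation~(i) to obtain the $d$-complete poset $\cP'$ and then (in case~(2)) the single slant sum of Operation~(ii). Your argument is in fact a bit more explicit than the paper's in verifying that the anchor $z'$ remains the maximal element of $\cP'$, which is exactly what is needed to invoke Proposition~\ref{prop: slant sum tree d complete} in the final step.
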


\begin{proof}
Since the ribbon $\cZ$ has a unique maximal element, it follows that $\cZ$ is a rooted tree. The mobile poset $\cP$ is obtained from the rooted tree $\cZ$ with elements $z_1,\ldots,z_{s}$ by doing the following:
\begin{compactitem}
\item[(a)] Let $\cT_0=\cZ$. For $\ell=1,\ldots,s$, let $\cT_{\ell} = \cT_{\ell-1} \underset{i=1,\ldots,m_{z_{\ell}}}{\prescript{z_{\ell}}{}{\big{\backslash}_{r_i}}} \cR_{z_{\ell}}^{(i)}$ for each of the $m_{z_\ell} \geq 0$ connected $d$-complete posets $\cR_{z_{\ell}}^{(i)}$ covered by $z_{\ell}$ in $\cZ$ in Operation (i) of Definition~\ref{def:generalized mobiles}. Denote the resulting poset $\cT_s$ by $\cP'$. 
\item[(b)] Perform the slant sum $\cQ_{z'} \,{}^{q}\backslash_{z'}\cP'$, where $q$ is an acyclic element of the $d$-complete poset $\cQ_{z'}$ from Operation (ii) of Definition~\ref{def:generalized mobiles}.
\end{compactitem}
Proposition~\ref{prop: slant sum tree d complete} guarantees that the poset $\cP'$ is a $d$-complete poset, and furthermore, that the poset $\cP =\cQ_{z'} \,{}^{q}\backslash_{z'}\cP'$ is a  $d$-complete poset, as desired.
\end{proof}

We will now produce a set of folds $F$ on any mobile poset $\cP$ that makes the component tree $C(\cP_F)$ a path.  After doing so, Proposition~\ref{prop:goodordering} will guarantee the existence of a path order on the vertices of $C(\cP_F)$, so we will be able to use Lemma~\ref{thm:det} to compute the number of linear extensions of $\cP$.

\begin{definition}
Let $\cP$ be a mobile poset and $\cZ := \cZ_S$ be an underlying ribbon poset of $\cP$.  The set  of \emph{path folds} for $\cP$ (with respect to $\cZ$) is defined as
\begin{equation}
    \label{eq: folds mobile tree poset}
F = \begin{cases}
\{ (z_{i+1},z_i) \mid i \in S\} &\text{ if } \cP \text{ is free-standing,}\\
\{ (z_{i+1},z_i) \mid i \in S, i<j\} \cup \{ (z_i,z_{i+1}) \mid i \not\in S, i \geq j\} & \text{ otherwise,}  
\end{cases}
\end{equation}
where $j$ is the index of the anchor $z'=z_j$ covered by an acyclic element of a connected $d$-complete poset $\cQ_{z'}$ (see Definition~\ref{def:generalized mobiles} (ii)). 
\end{definition}

\begin{lemma}\label{lemma: generalized mobiles have path folding}
Let $\cP$ be a mobile poset and $F$ be the set of path folds for $\cP$.  Then the component tree $C(\cP_F)$ is a path.
\end{lemma}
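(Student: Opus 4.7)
The plan is to explicitly identify the connected components of $\cP \ominus F$ and then observe that the folded edges of $F$ link them in a linear fashion, yielding a path structure on $C(\cP_F)$. The key preliminary observation is that every cover relation listed in \eqref{eq: folds mobile tree poset} is a cover relation of the underlying ribbon $\cZ$; none of the slant-sum cover relations attaching the various $d$-complete posets in Operations (i) and (ii) of Definition~\ref{def:generalized mobiles} ever appears in $F$.

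First I would cut the ribbon. Let $i_1 < i_2 < \cdots < i_k$ be the indices for which the cover relation between $z_{i_r}$ and $z_{i_r + 1}$ lies in $F$, and set $i_0 := 0$ and $i_{k+1} := |\cZ|$. In both cases of \eqref{eq: folds mobile tree poset} these indices are distinct (in the non-free-standing case, the index sets $\{i \in S : i < j\}$ and $\{i \notin S : i \geq j\}$ are disjoint by virtue of $S$ vs.\ its complement), so $\cZ \ominus F$ decomposes into $k+1$ maximal ribbon pieces $\cZ[i_r+1, i_{r+1}]$ for $r = 0, \ldots, k$.

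Next I would re-attach the $d$-complete pieces. Each connected $d$-complete poset $\cR_z^{(i)}$ attached in Operation (i) hangs below a single ribbon element $z$ by a cover relation not in $F$, so in $\cP \ominus F$ it remains connected to its ribbon piece. Similarly, in the non-free-standing case, $\cQ_{z'}$ remains joined to the anchor $z_j$ via the unfolded cover relation $z_j \lessdot q$. Consequently, the connected components of $\cP \ominus F$ are precisely the subposets $C_0, \ldots, C_k$, where $C_r$ consists of the ribbon piece $\cZ[i_r+1, i_{r+1}]$ together with all attached $d$-complete posets (and, if $j \in [i_r+1, i_{r+1}]$, also $\cQ_{z'}$).

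Finally, I would read off the component tree. Each cover relation in $F$ bridges two consecutive components $C_{r-1}$ and $C_r$ in the order along $\cZ$, and each such pair is connected by exactly one fold edge. Hence $C(\cP_F)$ has vertex set $\{C_0, \ldots, C_k\}$ with edge set $\{(C_{r-1}, C_r) : 1 \leq r \leq k\}$, which is a path. I do not expect any genuine obstacle; the only substantive work is the case-analysis bookkeeping that confirms $F$ cuts $\cZ$ at distinct positions without touching any attaching edge, both of which follow directly from \eqref{eq: folds mobile tree poset} and Definition~\ref{def:generalized mobiles}.
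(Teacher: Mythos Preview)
Your proof is correct and follows essentially the same approach as the paper: both arguments observe that every fold in $F$ is a cover relation of the underlying ribbon $\cZ$, so deleting $F$ cuts $\cZ$ into consecutive segments while leaving each attached $d$-complete poset joined to its segment, and the folds then link consecutive components linearly. The paper additionally records the shape of the folded ribbon $\cZ_F$ (a chain in the free-standing case, a path rooted at the anchor otherwise), which it uses in the next lemma, but this extra detail is not needed for the present statement.
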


\begin{proof}
In the case of a free-standing mobile, the underlying folded ribbon $\cZ_F$ in the folded poset $\cP_F$ is a chain. Hence $\cP_F$ consists of an underlying chain $\cZ_F$, containing the folds, where for every element $z \in \cZ_F$ we do the corresponding slant sums in Operation~(i) of Definition~\ref{def:generalized mobiles} that we do for the mobile $\cP$.

If the mobile is not free-standing with respect to $\cZ$ and $z'=z_j$ is its anchor, then the underlying folded ribbon $\cZ_F$ in the folded poset $\cP_F$ is a rooted path rooted at $z'$. Hence $\cP_F$ consists of this underlying rooted path $\cZ_F$, containing the folds, where we do the corresponding slant sums in Operations~(i),(ii) of Definition~\ref{def:generalized mobiles} that we do for the mobile $\cP$.

In both cases, we have an underlying path $\cZ_F$ containing the folds (equivalently, the edges of the component tree $C(\cP_F)$), and the  slant sums described above occur in the connected components of $\cP \ominus F$ (the vertices of $C(\cP_F)$). Thus the component tree $C(\cP_F)$ is a path, as desired.
\end{proof}

In view of Lemma~\ref{lemma: generalized mobiles have path folding} and Proposition~\ref{prop:goodordering}, we choose the path order $\sigma$ on $F$ that is consistent with the ordered set $S$ of the underlying ribbon $\cZ_S$ of $\cP$. In this case, we call $\sigma$ the {\em induced path order}. Because this $\sigma$ is a path order, the associated component array $M_{\sigma}(\cP_F)$ has connected poset entries; we characterize these poset entries in the next result.
 
\begin{lemma}\label{cor:path_ordering_gives_d_complete_posets}
Let $\cP$ be a mobile poset and $F$ be the set of path folds for $\cP$ with induced path order $\sigma$.  Then every entry of the component array $M_\sigma(\cP_F)$ is a $d$-complete poset.
\end{lemma}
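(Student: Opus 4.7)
The plan is to describe the folded poset $\cP_F$ explicitly, identify each entry $C(\cP_F)[i,j]$ as a rooted subtree of the folded ribbon with $d$-complete posets slant-summed onto it, and then invoke Proposition~\ref{prop: slant sum tree d complete} repeatedly.

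First I would analyze the image $\cZ_F$ of the underlying ribbon inside $\cP_F$. In the free-standing case, folding every descent of $\cZ_S$ turns it into the chain $z_1 \lessdot z_2 \lessdot \cdots \lessdot z_n$, which is a rooted tree. In the non-free-standing case with anchor $z_{j^*}$, the prescribed folds produce the rooted tree consisting of the two ascending chains $z_1 \lessdot \cdots \lessdot z_{j^*}$ and $z_n \lessdot \cdots \lessdot z_{j^*+1} \lessdot z_{j^*}$, both meeting at the root $z_{j^*}$. Either way $\cZ_F$ is a rooted tree, and since folding modifies only cover relations inside $\cZ$, the poset $\cP_F$ is obtained from $\cZ_F$ by reapplying Operations (i) and (ii) of Definition~\ref{def:generalized mobiles} verbatim.

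Next I would describe each entry of the component array. The connected components of $\cP \ominus F$ are in bijection with the maximal sub-ribbons delimited by the folds, each carrying the $d$-complete posets attached by Operation (i); the component containing the anchor $z_{j^*}$ additionally carries $\cQ_{z_{j^*}}$. Under the induced path order $\sigma$, which agrees with increasing ribbon index, the entry $C(\cP_F)[i,j]$ is the subposet of $\cP_F$ spanned by the consecutive components $\sigma_i,\ldots,\sigma_j$, reconnected through the folds of $F$ between them. The ribbon-part of this entry is a connected rooted subtree $T_{i,j}$ of $\cZ_F$ — a sub-chain if the anchor's component lies outside $[i,j]$, and a sub-V-shape with $z_{j^*}$ on top if it lies inside.

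Finally, I would assemble $C(\cP_F)[i,j]$ by iterated slant sums. Starting from the rooted tree $T_{i,j}$, which is $d$-complete with every element acyclic, attach each hanging $\cR_{z_\ell}^{(i)}$ with $z_\ell \in T_{i,j}$ below $z_\ell$ by a slant sum; Proposition~\ref{prop: slant sum tree d complete} guarantees that each such step yields a $d$-complete poset while preserving acyclicity of the ribbon elements. If $z_{j^*}$ lies in $T_{i,j}$, then $z_{j^*}$ remains the maximum of the poset built so far — everything attached sits below its attaching vertex in $T_{i,j}$ — so a final slant sum $\cQ_{z_{j^*}}\,{}^{q}\backslash_{z_{j^*}}(\cdot)$ with the acyclic element $q$ produces a $d$-complete poset by the same proposition. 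I expect the main obstacle to lie in the middle step: verifying that consecutive components in $\sigma$ really do reassemble into a connected rooted subtree of $\cZ_F$, and in particular carefully tracking how the component containing the anchor interacts with its neighbours in the path $C(\cP_F)$ so that the sub-V-shape claim is justified without case-analysis slip-ups.
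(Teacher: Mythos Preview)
Your proposal is correct and follows essentially the same route as the paper: identify $\cZ_F$ as a rooted tree (chain or V-shape rooted at the anchor), recognize each entry $C(\cP_F)[i,j]$ as built from a rooted sub-ribbon with the same $d$-complete attachments, and conclude $d$-completeness via iterated slant sums and Proposition~\ref{prop: slant sum tree d complete}. The only organizational difference is that the paper packages the final slant-sum argument as a separate Proposition~\ref{prop:when-is-mobile-dcomplete} (a mobile whose underlying ribbon has a unique maximal element coinciding with the anchor, if any, is $d$-complete) and then cites it, whereas you inline that argument directly.
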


\begin{proof}
The poset $\cP_F$ is a mobile with respect to the folded ribbon $\cZ_F$. As in the proof of Lemma~\ref{lemma: generalized mobiles have path folding}, we have that $\cZ_F$ is a rooted tree; moreover, the maximal element of $\cZ_F$ coincides with the anchor $z'$ if $\cP$ is not free-standing with respect to $\cZ$. Also by Lemma~\ref{lemma: generalized mobiles have path folding}, we have that $C(\cP_F)$ is a path, so every poset $\cP_{i,j} := M_{\sigma}(\cP_F)_{i,j}$ comes from a subpath in $C(\cP_F)$. Thus $\cP_{i,j}$ is a mobile with respect to a smaller ribbon, denote it by $\cZ_{i,j}$, that has a unique maximal element. Moreover, this maximal element coincides with the anchor $z'$ of $\cP$, if $\cP_{i,j}$ is not free-standing with respect to $\cZ_{i,j}$. By Proposition~\ref{prop:when-is-mobile-dcomplete}, it follows that each mobile $\cP_{i,j}$ is a  $d$-complete poset, as desired.
\end{proof}

We now state and prove the main result for computing linear extensions of mobile posets.

\begin{theorem} \label{thm:gmobiledetdone}
Let $\cP$ be a mobile poset with $n$ elements and $F$ be the set of path folds for $\cP$ with induced path order $\sigma$.  Then 
\begin{equation}\label{eqn:gmobiledet}
e(\cP) \,=\, n!\cdot\det(M_{i,j})_{0 \leq i,j\leq k}, \quad \text{for} \quad 
M_{i,j} := \begin{cases}0 & \text{if } j<i-1,\\
1 & \text{if } j = i-1,\\
1/\prod_{x \in \cP_{i,j}} h_{\cP_{i,j}}(x) & \text{otherwise},
\end{cases}
\end{equation}
where $k$ is the size of $F$ and $\cP_{i,j}$ is the connected $d$-complete poset $(M_\sigma(\cP_F))_{i,j}$. 
\end{theorem}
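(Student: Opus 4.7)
The plan is to assemble the statement from three already-established pieces: the inclusion--exclusion determinant identity (Lemma~\ref{thm:det}), the structural fact that the path folds produce a path component tree whose nodes are $d$-complete posets (Lemmas~\ref{lemma: generalized mobiles have path folding} and~\ref{cor:path_ordering_gives_d_complete_posets}), and the Peterson--Proctor hook-length formula (Theorem~\ref{thm:dhook}). There are no new combinatorial ingredients required; the work is entirely in verifying that all three hypotheses are simultaneously satisfied and then substituting.

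First, I would verify applicability of Lemma~\ref{thm:det}. Since $\cP$ is a mobile poset and $F$ is its set of path folds, Lemma~\ref{lemma: generalized mobiles have path folding} shows that the component tree $C(\cP_F)$ is a path, and Proposition~\ref{prop:goodordering} then guarantees that a path order on the vertices of $C(\cP_F)$ exists; the induced path order $\sigma$ (arising from the ordered descent set $S$ of the underlying ribbon) is one such. Each edge of the underlying path of $C(\cP_F)$ corresponds to a cover relation in $F$, and each $F$-edge is a bridge of the Hasse diagram of $\cP$ by construction, matching the standing hypothesis of Section~\ref{section:detie}. Thus Lemma~\ref{thm:det} applies and yields
\[
e(\cP) \,=\, n!\cdot\det\bigl(\overline{e}(M_\sigma(\cP_F))\bigr).
\]

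Second, I would identify the upper-triangular entries. By definition \eqref{def: eq matrix}, the $(i,j)$ entry of $\overline{e}(M_\sigma(\cP_F))$ for $i\leq j$ is $\overline{e}(\cP_{i,j})=e(\cP_{i,j})/(\#\cP_{i,j})!$, where $\cP_{i,j}=(M_\sigma(\cP_F))_{i,j}$ is the connected subposet on the union of the components indexed by $\sigma_i,\sigma_{i+1},\ldots,\sigma_j$. Lemma~\ref{cor:path_ordering_gives_d_complete_posets} guarantees that each such $\cP_{i,j}$ is a connected $d$-complete poset. Applying the hook-length formula of Theorem~\ref{thm:dhook} to each $\cP_{i,j}$ gives
\[
\overline{e}(\cP_{i,j}) \,=\, \frac{1}{\prod_{x\in\cP_{i,j}} h_{\cP_{i,j}}(x)},
\]
which is exactly the upper-triangular entry $M_{i,j}$ in the theorem. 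The lower-triangular and subdiagonal entries (zero and one, respectively) coincide with those of $\overline{e}(M_\sigma(\cP_F))$ by \eqref{def: eq matrix}. Substituting into the determinant from Lemma~\ref{thm:det} yields the claimed identity.

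The only genuine subtlety, and thus the place I would write most carefully, lies in Lemma~\ref{cor:path_ordering_gives_d_complete_posets}: one must ensure that every sub-interval $\cP_{i,j}$ along the induced path order really is $d$-complete, not merely a mobile. That lemma is already proved in the excerpt by invoking Proposition~\ref{prop:when-is-mobile-dcomplete} (the characterization of when a mobile is $d$-complete), so the present proof simply needs to reference it. No further verification is required, and the statement follows by direct substitution.
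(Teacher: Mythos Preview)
Your proposal is correct and follows essentially the same route as the paper's own proof: apply Lemma~\ref{thm:det} (justified via Lemma~\ref{lemma: generalized mobiles have path folding} and the induced path order), then invoke Lemma~\ref{cor:path_ordering_gives_d_complete_posets} to see that each $\cP_{i,j}$ is $d$-complete, and finally substitute the Peterson--Proctor hook-length formula (Theorem~\ref{thm:dhook}) for $\overline{e}(\cP_{i,j})$. Your write-up is slightly more explicit about verifying the bridge and path-order hypotheses, but the argument is otherwise identical.
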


\begin{proof}
Since $\sigma$ is a path order, we may apply Lemma~\ref{thm:det} to get
\[
e(\cP) = n! \cdot \det(\overline{e}(M_{\sigma}(\cP_F))).
\]
Thanks to Lemma~\ref{cor:path_ordering_gives_d_complete_posets}, each poset $\cP_{i,j} := (M_\sigma(\cP_F))_{i,j} = C(\cP_F)[i,j]$ for $i \leq j$ is a connected $d$-complete poset, so $\overline{e}(\cP_{i,j})$ is given by the hook-length formula $1/\prod_{x \in \cP_{i,j}} h_{\cP_{i,j}}(x)$ via Theorem~\ref{thm:dhook}.
\end{proof}

\begin{example} \label{ex:genmobile}
Consider the mobile poset $\cP$ with Hasse diagram and set of path folds $F = \{(e,b), (f,d)\}$ pictured in Figure~\ref{fig: component tree and array gen mobile}: Left. The component tree $C(\cP_F)$ and the component array $M_{\sigma}(\cP_F)$ are pictured in Figure~\ref{fig: component tree and array gen mobile}: Center, Right.

Applying Theorem~\ref{thm:gmobiledetdone} to $\cP$ gives the determinantal formula
\[e(\cP) = 10!\cdot\det(\overline{e}(M_{\sigma}(\cP_F))) = 10!\cdot\det
\begin{pmatrix}
\frac{1}{1} & \frac{1}{9\cdot8\cdot5\cdot3\cdot2\cdot2\cdot2} & \frac{1}{10\cdot9\cdot6\cdot3\cdot2\cdot2\cdot2} \\[6pt]
1 & \frac{1}{8\cdot7\cdot5\cdot3\cdot2\cdot2} & \frac{1}{9\cdot8\cdot6\cdot3\cdot2\cdot2} \\[6pt]
0 & 1 & \frac{1}{1}
\end{pmatrix}
= 240. 
\]

\renewcommand{\qedsymbol}{\rule{0.65em}{0.65em}}\hfill\qedsymbol

\begin{figure}
    \centering
   \raisebox{30pt}{\includegraphics[scale=0.75]{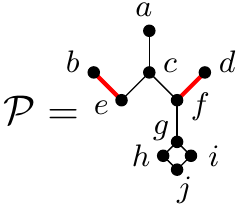}} \,\,
    \raisebox{20pt}{\includegraphics[scale=0.75]{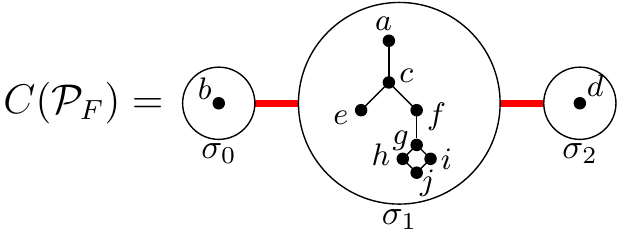}} \,\,    \includegraphics[scale=0.75]{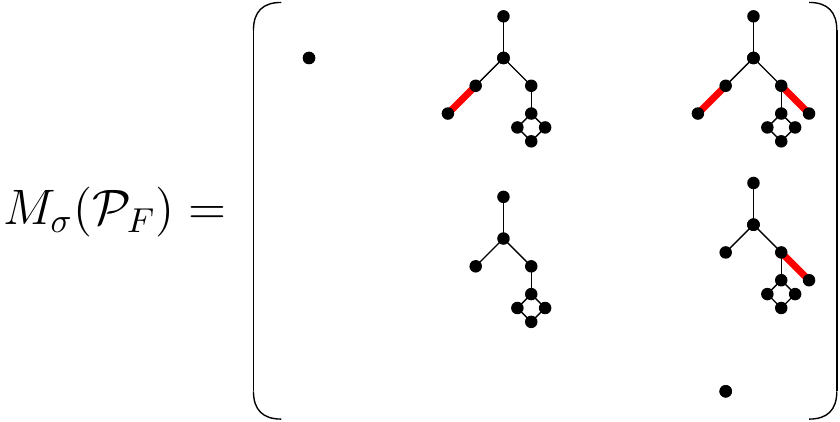}
    \caption{Left: a mobile poset $\cP$ with folds highlighted. Center: its component tree $C(\cP_F)$, with path order $\sigma = (\sigma_0,\sigma_1,\sigma_2)$. Right: its component array $M_\sigma(\cP_F)$.}
    \label{fig: component tree and array gen mobile}
\end{figure}
\end{example}

\subsubsection{Mobile tree posets}

In this subsection and the next, we give examples and applications of Theorem~\ref{thm:gmobiledetdone}.  To illustrate these examples, we restrict to a smaller and simpler class of posets called ``mobile tree posets"---there are three reasons for this: here the entries of the component array are rooted trees, we are able to give a complete classification of this class of posets in Theorem~\ref{cor:mobile characterization}, and mobile tree posets admit both major index and inversion $q$-analogues of our determinantal formulas  while for the more general class of mobile posets, we were only able to obtain a major index $q$-analogue (see Remark~\ref{rem:noinv}). 

\begin{definition}\label{def:mobiles}
A {\em mobile tree poset} (resp. {\em free-standing mobile tree poset}) $\cP$ is a mobile poset (resp. free-standing mobile poset) for which all $\cR_z^{(i)}$ in Operation (i) and $\cQ_{z'}$ in Operation (ii) (resp. all $\cR_z^{(i)}$ in Operation (i)) of Definition~\ref{def:generalized mobiles} are required to be rooted trees.\footnote{Note that every element of a rooted tree is acyclic (see Section~\ref{section:d-complete}), so any element of the rooted tree poset $\cQ_{z'}$ can be chosen to cover the anchor $z'$ in $\cZ$.} 
\end{definition}

See Figure~\ref{fig:mobile tree poset}: Right for a schematic of a mobile tree poset. Note that the class of mobile tree posets still contains ribbons and rooted tree posets.  

As a corollary of Theorem~\ref{thm:gmobiledetdone}, we have a determinant formula for the number of linear extensions of mobile tree posets. 

\begin{corollary} \label{thm:mobiledetdone}
Let $\cP$ be a mobile tree poset with $n$ elements and $F$ be the set of path folds for $\cP$ with induced path order $\sigma$.  Then 
\begin{equation}
e(\cP) \,=\, n!\cdot\det(M_{i,j})_{0 \leq i,j\leq k}, \quad \text{for} \quad 
M_{i,j} := \begin{cases}0 & \text{if } j<i-1,\\
1 & \text{if } j = i-1,\\
1/\prod_{x \in \cP_{i,j}} h_{\cP_{i,j}}(x) & \text{otherwise},
\end{cases}
\end{equation}
where $k$ is the size of $F$
and $\cP_{i,j}$ is the rooted tree poset $(M_\sigma(\cP_F))_{i,j}$. 
\end{corollary}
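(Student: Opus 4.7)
The plan is to derive Corollary~\ref{thm:mobiledetdone} as a restriction of Theorem~\ref{thm:gmobiledetdone}. The only real work is to upgrade the statement that each entry of the component array is a connected $d$-complete poset to the stronger statement that it is a rooted tree poset. First, I would observe that every mobile tree poset is a mobile poset: Definition~\ref{def:mobiles} is exactly Definition~\ref{def:generalized mobiles} restricted to the case where the constituent posets $\cR_z^{(i)}$ in Operation~(i) and $\cQ_{z'}$ in Operation~(ii) are rooted trees (which are $d$-complete, with every element acyclic by the remark following Definition~\ref{def:mobiles}, so all slant sums involved are legal). Hence Theorem~\ref{thm:gmobiledetdone} applies to $\cP$ verbatim, giving the stated determinantal identity with $\cP_{i,j}$ a priori a connected $d$-complete poset.

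Next I would strengthen ``$d$-complete'' to ``rooted tree''. Retracing the proof of Lemma~\ref{cor:path_ordering_gives_d_complete_posets}, each $\cP_{i,j}$ is itself a mobile poset, built from a subribbon $\cZ_{i,j}$ of the folded ribbon $\cZ_F$ having a unique maximal element, so $\cZ_{i,j}$ is already a rooted chain. The construction of $\cP_{i,j}$ from $\cZ_{i,j}$ amounts to iterated slant sums at vertices of $\cZ_{i,j}$ of some of the rooted trees $\cR_z^{(\ell)}$ and, when the anchor lies in $\cP_{i,j}$, of the rooted tree $\cQ_{z'}$. I would then check by induction that a slant sum $\cT_1\,{}^{p}\backslash_{r}\,\cT_2$ of two rooted tree posets (with $r$ the root of $\cT_2$) is again a rooted tree: the Hasse diagram is obtained from the disjoint union of the Hasse diagrams of $\cT_1$ and $\cT_2$ by adding the single cover $r\lessdot p$, which preserves acyclicity and connectivity, while the unique maximal element of $\cT_1$ remains the unique maximal element of the result.

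Finally, I would note that in a rooted tree poset no element sits in the neck of any $d_k$-interval: such an interval would contain a diamond $w\lessdot x,y \lessdot z$ with $x\neq y$, forcing $w$ to be covered by two distinct elements, which is impossible in a rooted tree. Hence case~(2) of Definition~\ref{def:dcomphook} never applies, and the $d$-complete hook length collapses to the rooted-tree hook length $h_{\cP_{i,j}}(x)=\#\{y\in\cP_{i,j}\mid y\leq_{\cP_{i,j}}x\}$ of Theorem~\ref{prop:rootedtree}, matching the expression in \eqref{eqn:gmobiledet}. The only genuine (and mild) obstacle is the inductive bookkeeping that slant sums of rooted trees remain rooted trees; the rest is a direct specialization of Theorem~\ref{thm:gmobiledetdone}.
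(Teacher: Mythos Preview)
Your proposal is correct and follows essentially the same approach as the paper: apply Theorem~\ref{thm:gmobiledetdone} to the mobile tree poset (which is a mobile poset) and observe that the component-array entries $\cP_{i,j}$ are rooted trees. The paper's proof is a one-line assertion of this fact, whereas you spell out the slant-sum induction and the collapse of the $d$-complete hook length to the rooted-tree hook length; this extra detail is sound and the argument is the same in substance.
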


\begin{proof}
A mobile tree poset is an example of a mobile poset, so we apply Theorem~\ref{thm:gmobiledetdone} and note that the posets in the component array $M_\sigma(\cP_F)$, denoted by $\cP_{i,j}:= (M_{\sigma}(\cP_F))_{i,j}$,
are rooted trees.
\end{proof}

\begin{example} \label{ex:mobile}
Consider the mobile tree poset $\cP$ with Hasse diagram and set of path folds $F=\{(d,a),(d,c)\}$ pictured in Figure~\ref{fig: component tree and array2}: Left. The component tree $C(\cP_F)$ and the component array $M_{\sigma}(\cP_F)$ are illustrated in Figure~\ref{fig: component tree and array2}: Center, Right.

Applying Corollary~\ref{thm:mobiledetdone} to $\cP$ yields the determinantal formula
\[
e(\cP) = 6! \cdot \det(\overline{e}(M_{\sigma}(\cP_F))) = 6! \cdot \det
\begin{pmatrix}
\frac{1}{1} & \frac{1}{5\cdot 4} & \frac{1}{6\cdot 5} \\[6pt]
1 & \frac{1}{4\cdot 3} & \frac{1}{5\cdot 4} \\[6pt]
0 & 1 & \frac{1}{1}
\end{pmatrix}
= 12.
\]
\renewcommand{\qedsymbol}{\rule{0.65em}{0.65em}}\hfill\qedsymbol
\begin{figure}
    \centering
   \raisebox{35pt}{\includegraphics[scale=0.8]{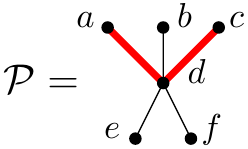}} \quad
    \raisebox{22pt}{\includegraphics[scale=0.8]{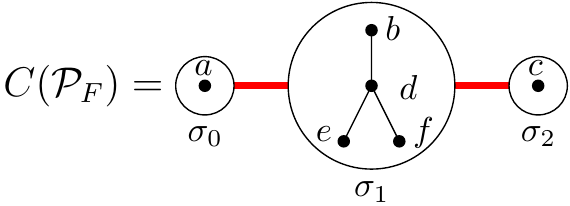}} \,\,\,    \includegraphics[scale=0.8]{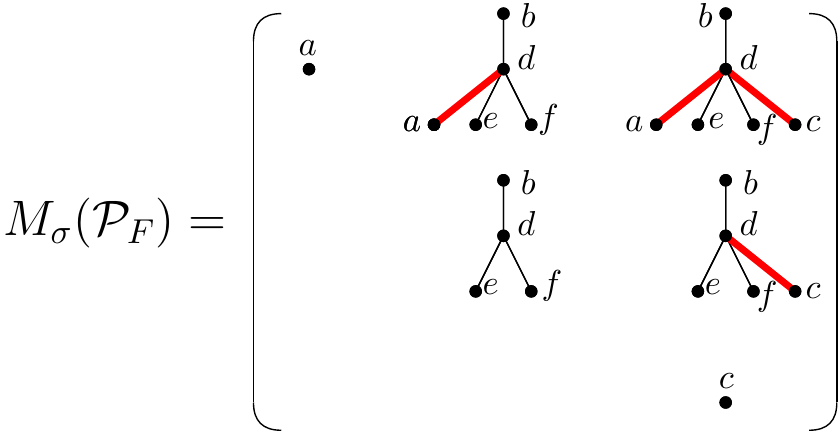}
    \caption{Left: a mobile tree poset $\cP$ with folds highlighted. Center: its component tree $C(\cP_F)$, with path order $\sigma = (\sigma_0,\sigma_1,\sigma_2)$. 
    Right: its component array $M_\sigma(\cP_F)$.}
    \label{fig: component tree and array2}
\end{figure}
\end{example}

In fact, the result below implies that mobile tree posets are the only tree posets for which there exists a set of folds where the component tree is a path and the folded poset is a rooted tree; thus, we have the following characterization of mobile tree posets.

\begin{theorem}\label{cor:mobile characterization}
A tree poset $\cP$ is a mobile tree poset if and only if there exists a set of folds $F$ such that $\cP_F$ is a rooted tree and $C(\cP_F)$ is a path.
\end{theorem}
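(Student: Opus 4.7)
The ``only if'' direction follows from Lemma~\ref{lemma: generalized mobiles have path folding}: taking $F$ to be the set of path folds of $\cP$ makes $C(\cP_F)$ a path, and folding never changes the underlying (tree) Hasse graph, so $\cP_F$ has a tree Hasse diagram. A unique maximum exists because, in the free-standing case, the folded ribbon $\cZ_F$ is a chain whose top dominates the Operation~(i) attachments (which hang below ribbon elements), while in the non-free-standing case $\cZ_F$ becomes a rooted path at the anchor $z'$ and the unique max of the rooted tree $\cQ_{z'}$ attached above $z'$ via Operation~(ii) is the overall max of $\cP_F$.

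For the ``if'' direction, label the vertices of the path $C(\cP_F)$ as $\sigma_0, \sigma_1, \ldots, \sigma_k$ in path order, with fold $f_i$ joining $\sigma_i$ and $\sigma_{i+1}$, and let $u_i$ (resp.\ $v_i$) denote the endpoint of $f_{i-1}$ (resp.\ $f_i$) in $\sigma_i$. Each $\sigma_i$ is a connected subposet of the rooted tree $\cP_F$, hence itself a rooted tree. Let $i^*$ be the index with the maximum of $\cP_F$ in $\sigma_{i^*}$. Since any chain in $\cP_F$ from an element of $\sigma_i$ (for $i < i^*$) up to the root of $\cP_F$ must exit $\sigma_i$ through $v_i$, it follows that $v_i$ is the root of $\sigma_i$; hence the unique Hasse path from $u_i$ to $v_i$ inside $\sigma_i$ is an ascending chain in $\cP$. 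Symmetrically, $u_i$ is the root of $\sigma_i$ and the Hasse path is descending for $i > i^*$. For $i = i^*$ with $0 < i^* < k$, both $u_{i^*}$ and $v_{i^*}$ lie weakly below the root of $\sigma_{i^*}$, and the Hasse path from $u_{i^*}$ to $v_{i^*}$ rises to their least common ancestor $w$ in $\sigma_{i^*}$ and then descends; when $i^* \in \{0, k\}$ the spine in $\sigma_{i^*}$ collapses to the single fold-endpoint $v_0$ or $u_k$, which plays the role of $w$.

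Now define the ribbon $\cZ$ as the union of these Hasse paths (the single element $v_0$ in $\sigma_0$, the single element $u_k$ in $\sigma_k$, and the $u_i$-to-$v_i$ path in each middle $\sigma_i$), concatenated by the cover relations of $F$ in their original orientation in $\cP$. This is a path in the Hasse diagram of $\cP$, and hence a ribbon. The complement of the spine inside each $\sigma_i$ decomposes as a disjoint union of rooted subtrees of $\sigma_i$, each attached either below a spine element (yielding an Operation~(i) attachment) or, solely in $\sigma_{i^*}$, as the complement of the subtree rooted at $w$ inside $\sigma_{i^*}$ (yielding the Operation~(ii) attachment $\cQ_w$ with anchor $w$). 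If this last complement is empty (i.e.\ $w$ is the root of $\sigma_{i^*}$), the resulting mobile tree is free-standing. This exhibits $\cP$ as a mobile tree poset.

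The main obstacle is verifying the at-most-one-anchor condition of Definition~\ref{def:mobiles}. The key point is that for every $i \neq i^*$, the root of $\sigma_i$ is already on the spine (as $v_i$ or $u_i$), so no element of $\sigma_i$ lies above the spine and no Operation~(ii)-style attachment is needed or possible there; uniqueness of the maximum of $\cP_F$ then isolates the anchor to $\sigma_{i^*}$. The small boundary case $i^* \in \{0, k\}$ is handled by letting $w = v_0$ or $w = u_k$ as noted above.
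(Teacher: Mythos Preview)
Your proof is correct and follows essentially the same approach as the paper: both directions use the path folds from Lemma~\ref{lemma: generalized mobiles have path folding}, and for the converse both build the ribbon $\cZ$ by threading the fold edges through the unique Hasse paths inside each component $\sigma_i$. The one stylistic difference is in verifying the at-most-one-anchor condition: the paper argues by contradiction (two upward covers from $\cZ$ into $\cP\setminus\cZ$ would force either a cycle or two maxima in $\cP_F$), whereas you argue structurally by locating the root of each $\sigma_i$ on the spine for $i\neq i^*$ and identifying the unique possible anchor as the least common ancestor $w$ in $\sigma_{i^*}$; both arguments are valid and amount to the same observation.
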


\begin{proof}
Suppose $\cP$ is a mobile tree poset and let $F$ be the path folds from \eqref{eq: folds mobile tree poset}.  By Lemma~\ref{lemma: generalized mobiles have path folding}, we have that $C(\cP_F)$ is a path.  Showing that $\cP_F$ is a rooted tree is a straightforward consequence of the proof of Lemma~\ref{cor:path_ordering_gives_d_complete_posets}---the only change is that the $\cR_z^{(i)}$ in Step (a) and $\cQ_{z'}$ in Step (b) are rooted tree posets, and it is clear that the outcome of doing all of the corresponding slant sums is a rooted tree poset.

Conversely, let $\cP$ be a tree poset with a set of folds $F=\{f_1,\ldots,f_k\}$ such that $\cP_F$ is a rooted tree and $C(\cP_F)$ is a path. Then each of the vertices of $C(\cP_F)$ corresponds to a rooted subtree in both $\cP_F$ and $\cP$. We denote these rooted trees by $\cT_0,\ldots, \cT_k$ (following one of the two orders of the path $C(\cP_F)$). Let $\cZ$ be the path in $\cP$ obtained by completing the subpath $F$ with the unique subpath in the tree $\cT_i$ from $f_i$ to $f_{i+1}$ for $i=1,\ldots,k-1$. This path $\cZ$ as a subposet of $\cP$ is a ribbon poset. 

Since $\cP_F$ is a rooted tree, given an element $z$ in $\cZ$, the elements in $\cP \setminus \cZ$ 
smaller than it are a disjoint sum of rooted trees $\{ \cT^{(i)}_{z}\}$. The element $z$ covers the root of each of these rooted trees. This coincides with the outcome of performing Operation (i) in the definition of mobile tree posets (see Definitions~\ref{def:mobiles} and~\ref{def:generalized mobiles}).

In addition, there is at most one cover relation $z'\lessdot x$, where $z'$ is in $\cZ$ and $x$ is in $\cP\setminus \cZ$. Since if there were another such cover relation $z''\lessdot y$, neither of these relations is folded in $\cP_F$ (because folds only occur in $\cZ$), so either $\cP_F$ has more than one maximal element or the Hasse diagram of $\cP_F$ has a cycle.  Both of these situations contradict the fact that $\cP_F$ is a rooted tree. 
We also know that the elements in the same connected component as $x$ in the Hasse diagram of $\cP \setminus \cZ$ form a rooted subtree (because $\cP_F$ is a rooted tree) that we denote by $\cQ_{z'}$. The element $z'$ is covered by $x$, which can be any element of $\cQ_{z'}$. This coincides with Operation (ii) in the definition of mobile tree posets (see Definitions~\ref{def:mobiles} and~\ref{def:generalized mobiles}), where we view the element $z'$ as the anchor. It follows that $\cP$ is a mobile tree poset, as desired. 
\end{proof}

\begin{remark}
There are posets that cannot be expressed as mobile posets, but for which  Lemma~\ref{thm:det} still applies (see  Figure~\ref{fig: example and nonexample of mobile}(d), where the folds are highlighted). 
 \end{remark}

\subsubsection{Two new generalizations of Euler numbers}
\label{ex:eulerext}

The even Euler ribbon numbers $E_{2k}$ count the linear extensions of the ribbon poset $\cZ=\cZ^{(2k)}_{\{2,4,\ldots,2k-2\}}$, which is the infinite family of up-down posets. We illustrate Corollary~\ref{thm:mobiledetdone} by generalizing the even Euler ribbon numbers using two different families of posets:

\begin{equation} \label{eq: two families of mobiles}
    \includegraphics[scale=0.95]{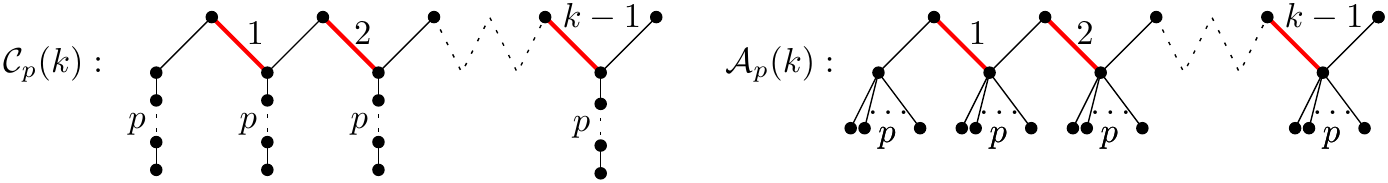}
\end{equation}

The first variation is obtained by appending a chain of $p$ elements to each minimal element of the original up-down poset $\cZ$. We denote the resulting poset by $\mathcal{C}_p(k)$. For example, the posets $\mathcal{C}_1(k)$ for $k=1,2,3,4$ are the following:  \begin{center}
\includegraphics[scale=0.8]{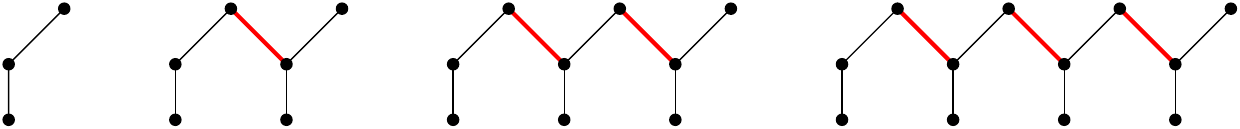}
\end{center}

\begin{corollary} \label{cor:first-variation}
The number of linear extensions of the poset $\mathcal{C}_p(k)$ is
\[
e(\mathcal{C}_p(k)) \,=\, ((p+2)k)! \cdot \det(c_{i,j})_{1\leq i,j\leq k}, 
\]
where
\[
c_{i,j} = \begin{cases}
\prod_{r=1}^{j-i+1} 1/(p!(rp+2r-1)(rp+2r)) &\text{ if } j\geq i-1, \\
0 & \text{ otherwise.}
\end{cases}
\]
\end{corollary}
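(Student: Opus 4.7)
The plan is to recognize $\mathcal{C}_p(k)$ as a free-standing mobile tree poset and apply Corollary~\ref{thm:mobiledetdone}, after which the claimed formula will follow from a direct hook-length computation on the component array entries.

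First I would identify the mobile structure: $\cP := \mathcal{C}_p(k)$ is constructed from the up-down ribbon $\cZ^{(2k)}_{\{2,4,\ldots,2k-2\}}$ (whose minimal elements are $z_1, z_3, \ldots, z_{2k-1}$) by performing Operation~(i) of Definition~\ref{def:generalized mobiles} at each minimal element with a chain of $p$ elements (a rooted tree). Since Operation~(ii) is not used, $\cP$ is free-standing with respect to this ribbon, so by \eqref{eq: folds mobile tree poset} the set of path folds is $F = \{(z_{2\ell+1}, z_{2\ell}) \mid \ell = 1, \ldots, k-1\}$. Removing $F$ partitions $\cP$ into $k$ connected components $\sigma_1, \ldots, \sigma_k$, each a chain of $p+2$ elements ($z_{2\ell-1}$, $z_{2\ell}$, and the $p$ pendant elements below $z_{2\ell-1}$), giving a total of $(p+2)k$ elements, which matches the factorial in the claim.

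Applying Corollary~\ref{thm:mobiledetdone} with the induced path order $\sigma = (\sigma_1, \ldots, \sigma_k)$ (after relabeling the indices from $\{0,\ldots,k-1\}$ to $\{1,\ldots,k\}$) then yields $e(\cP) = ((p+2)k)! \cdot \det(M_{i,j})_{1\le i,j\le k}$, where the upper-triangular entries are the reciprocals of the hook-length products of the rooted tree posets $\cP_{i,j}$. For $i \le j$, the poset $\cP_{i,j}$ is the rooted tree with root $z_{2j}$ consisting of the chain $z_{2i-1} \lessdot z_{2i} \lessdot \cdots \lessdot z_{2j}$ together with pendant chains of length $p$ hanging below each $z_{2\ell-1}$ for $\ell = i, \ldots, j$.

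The remaining and main step is the hook-length bookkeeping in each $\cP_{i,j}$. Setting $m = j-i+1$ and indexing the ribbon pairs by $r = \ell - i + 1 \in \{1,\ldots,m\}$, each of the $m$ pendant chains contributes a factor of $p!$ (hook lengths $1, 2, \ldots, p$); for the ribbon element $z_{2(i+r-1)-1}$ the hook length is $rp + 2r - 1$ (counting its $r$ pendant chains of length $p$ plus the $2r-1$ ribbon elements at or below it), and for $z_{2(i+r-1)}$ the hook length is $rp + 2r$. Multiplying these contributions gives $\prod_{r=1}^{m} p!(rp+2r-1)(rp+2r)$, matching $c_{i,j}$; the case $j = i-1$ corresponds to the empty product $c_{i,i-1} = 1$, and $c_{i,j} = 0$ for $j < i-1$ matches the subdiagonal zeros prescribed by Corollary~\ref{thm:mobiledetdone}. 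The main obstacle is merely keeping the hook-length accounting straight; no new ideas beyond Corollary~\ref{thm:mobiledetdone} are needed.
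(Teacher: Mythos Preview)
Your proposal is correct and follows exactly the approach indicated in the paper: the paper's proof consists of the single sentence ``This follows by applying Corollary~\ref{thm:mobiledetdone} to this family and computing the hook lengths of the entries of the component array,'' and you have simply carried out that computation in detail. Your identification of the path folds, the component posets $\cP_{i,j}$ as rooted chains with pendant $p$-chains, and the hook-length bookkeeping are all accurate.
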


\begin{proof}
This follows by applying Corollary~\ref{thm:mobiledetdone} to this family and computing the hook lengths of the entries of the component array.
\end{proof}

We recover the determinant formula for the even Euler ribbon numbers $E_{2k} = e(\cZ^{(2k)}_{\{2,4,\ldots,2k-2\}})$ given by \eqref{eq:Detribbon} when $p=0$ above, and when $p=1$ we obtain the sequence \cite[\href{https://oeis.org/A332471}{A332471}]{OEIS} 
\begin{equation}\label{eq: sequence first family}
   \{e(\mathcal{C}_1(k))\}_{k=1}^{\infty} = \{1, 16, 1036, 174664, 60849880,\dots\}
\end{equation}
with determinant formula:
\[
e(\mathcal{C}_1(k)) = (3k)!\cdot \det(a_{i,j})_{1\leq i,j\leq k}, \quad \text{ where } a_{i,j} = \begin{cases}
\prod_{r=1}^{j-i+1}\frac{1}{3r(3r-1)} &\text{ if } j\geq i-1, \\
0 & \text{ otherwise.}
\end{cases}
\]

The second variation is obtained by appending an antichain of $p$ elements to each minimal element of the original up-down poset $\cZ$. We denote the resulting poset by $\mathcal{A}_p(k)$. For example, the posets $\mathcal{C}_1(k)$ and $\mathcal{A}_1(k)$ are isomorphic, and the posets $\mathcal{A}_2(k)$ for $k=1,2,3,4$ are the following:
\begin{center}
\includegraphics[scale=0.8]{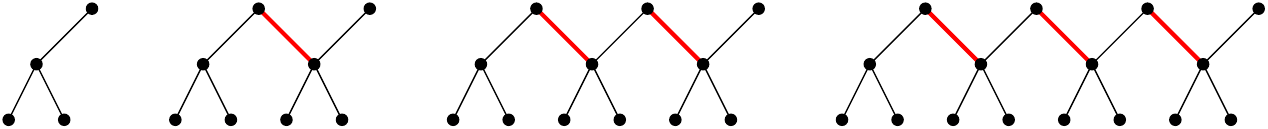}
\end{center}

\begin{corollary} \label{cor:second-variation}
The number of linear extensions of the poset $\mathcal{A}_p(k)$ is
\[
e(\mathcal{A}_p(k)) \,=\, ((p+2)k)! \cdot \det(a_{i,j})_{1\leq i,j\leq k}, 
\]
where
\[
a_{i,j} = \begin{cases}
\prod_{r=1}^{j-i+1} 1/((rp+rk-1)(rp+2r)) &\text{ if } j\geq i-1, \\
0 & \text{ otherwise.}
\end{cases}
\]
\end{corollary}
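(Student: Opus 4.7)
The plan is to apply Corollary~\ref{thm:mobiledetdone} directly to $\mathcal{A}_p(k)$. First I would verify that $\mathcal{A}_p(k)$ is a mobile tree poset with underlying ribbon $\cZ := \cZ^{(2k)}_{\{2,4,\ldots,2k-2\}}$: at each minimal element $z_{2\ell-1}$ (for $\ell=1,\ldots,k$) of $\cZ$, perform the iterated slant sum of Operation~(i) of Definition~\ref{def:generalized mobiles} with $p$ copies of the one-element rooted tree poset. No anchor is introduced, so $\mathcal{A}_p(k)$ is free-standing with respect to $\cZ$. By \eqref{eq: folds mobile tree poset}, the set of path folds is $F = \{(z_{2\ell+1}, z_{2\ell}) \mid \ell = 1,\ldots,k-1\}$, of size $k-1$, matching the $k \times k$ matrix indexed by $1 \le i, j \le k$ in the statement.

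Removing $F$ splits $\mathcal{A}_p(k)$ into $k$ components $\sigma_1, \ldots, \sigma_k$, where $\sigma_\ell$ consists of the cover $z_{2\ell-1} \lessdot z_{2\ell}$ together with the antichain $A_\ell$ of $p$ elements hanging below $z_{2\ell-1}$. By Lemma~\ref{lemma: generalized mobiles have path folding}, the component tree $C(\mathcal{A}_p(k)_F)$ is a path, and under the induced path order $\sigma$ the component array entry $\cP_{i,j}$ for $1 \leq i \leq j \leq k$ is the rooted tree obtained by joining $\sigma_i, \ldots, \sigma_j$ along the reversed folds. Explicitly, $\cP_{i,j}$ has underlying chain $z_{2i-1} \lessdot z_{2i} \lessdot z_{2i+1} \lessdot \cdots \lessdot z_{2j}$ of length $2(j-i+1)$, together with an antichain of $p$ elements below each odd-indexed vertex $z_{2i-1}, z_{2i+1},\ldots, z_{2j-1}$, for a total of $(j-i+1)(p+2)$ elements.

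The remaining step is the hook-length product for $\cP_{i,j}$. Each of the $(j-i+1)p$ antichain elements has hook length $1$ and contributes trivially. For the chain elements, a straightforward induction along the chain shows that, setting $r$ to index the ``blocks'' $\{z_{2i+2r-3}, z_{2i+2r-2}\} \cup A_{i+r-1}$ of size $p+2$ for $r = 1,\ldots, j-i+1$,
\[
h_{\cP_{i,j}}(z_{2i+2r-3}) \,=\, (r-1)(p+2) + (p+1) \,=\, rp+2r-1, \qquad h_{\cP_{i,j}}(z_{2i+2r-2}) \,=\, r(p+2) \,=\, rp+2r.
\]
Multiplying yields $\prod_{x \in \cP_{i,j}} h_{\cP_{i,j}}(x) = \prod_{r=1}^{j-i+1} (rp+2r-1)(rp+2r)$, which plugged into Corollary~\ref{thm:mobiledetdone} (after shifting from the $0$-indexed statement there to the $1$-indexed statement here, and noting $a_{i,j} = 0$ for $j = i - 1$ is replaced by the constant $1$ as in \eqref{def: eq matrix}) gives the claim. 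The main obstacle is purely bookkeeping: tracking the parity of positions along the chain and verifying that the antichain elements do not interact with the hooks of each other. No conceptual difficulty arises beyond recognizing $\mathcal{A}_p(k)$ as a free-standing mobile tree poset; the proof of Corollary~\ref{cor:first-variation} follows the identical template, with the only change being that each block of $\mathcal{C}_p(k)$ contributes an extra factor of $p!$ from the hook lengths $1, 2, \ldots, p$ of the attached chain instead of the all-ones contribution from the antichain.
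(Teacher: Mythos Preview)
Your proposal is correct and follows exactly the approach the paper takes: apply Corollary~\ref{thm:mobiledetdone} to this family and compute the hook lengths of the rooted trees in the component array. Your hook-length computation yields $(rp+2r-1)(rp+2r)$, which is right---the factor $(rp+rk-1)$ in the displayed statement is a typo in the paper, as confirmed by the $p=2$ specialization immediately after, where the factor is $4r-1 = rp+2r-1$.
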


\begin{proof}
This follows by applying Corollary~\ref{thm:mobiledetdone} to this family and computing the hook lengths of the entries of the component array.
\end{proof}

When $p=2$, we obtain the sequence \cite[\href{https://oeis.org/A332568}{A332568}]{OEIS} 
\begin{equation}\label{eq: sequence second family}
\{e(\mathcal{A}_2(k))\}_{k=1}^{\infty} = \{2, 220, 163800, 445021200, 3214652032800, \ldots\}
\end{equation}
with determinant formula
\[
e(\mathcal{A}_2(k)) = (4k)!\cdot \det(c_{i,j})_{1\leq i,j\leq k}, \quad \text{ where } c_{i,j} = \begin{cases}
\prod_{r=1}^{j-i+1}\frac{1}{4r(4r-1)} &\text{ if } j\geq i-1, \\
0 & \text{ otherwise.}
\end{cases}
\]

\section{Determinant formulas for \texorpdfstring{$q$}{q}-analogues of linear extensions of mobiles}
\label{section:q}

In this section, we obtain determinant formulas for the major index $q$-analogue of $e(\cP)$ for mobile posets (and therefore also for mobile tree posets as a special case) and for the inversion $q$-analogue of $e(\cP)$ for mobile tree posets. The reason for not proving an inversion $q$-analogue for mobile posets is that we do not know of an inversion $q$-analogue of Theorem~\ref{lemma:qhookmaj d complete} for $d$-complete posets (see Remark~\ref{rem:noinv}).  

Throughout this section, we use the preliminaries introduced in Section~\ref{section:qanalogues}.

\subsection{Inclusion-exclusion for \texorpdfstring{$q$}{q}-analogues of linear extensions}

In this section, we collect results which hold for any $\stat$ in $\{\maj,\inv\}$.  A subposet $\cQ$ of a labeled poset $(\cP,\omega)$ gets its own labeling by restricting the labeling $\omega$ on $\cP$ to $\cQ$; in this situation, we will often abuse notation and write $(\cQ,\omega)$ for the resulting labeled poset when the meaning is clear from context.
 
 \begin{lemma}\label{lem:qincexc}
 Let $(\cP,\omega)$ be a labeled poset, $(x, y)$ be in $\lessdot_{\cP}$, and $\stat$ be in $\{\maj,\inv\}$.  Then
 \[
 e^\stat_q(\cP, \omega) = e^\stat_q(\cP \ominus \{(x, y)\} , \omega) - e^\stat_q(\cP_{\{(x,y)\}}, \omega).
 \]
 \end{lemma}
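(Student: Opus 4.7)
My plan is to $q$-ify the proof of Lemma~\ref{lemma:inclusionexclusion} by noting that the set-theoretic identity \eqref{linext IE decomposition} already lives at the level of bijections $f\colon \cP \to [n]$, and that the weight $q^{\stat(\sigma)}$ attached to each resulting permutation $\sigma = \omega \circ f^{-1}$ depends only on $\sigma$, not on the labeled poset it arose from. Given these two observations, the inclusion-exclusion identity at the cardinality level will upgrade cleanly to one at the level of generating functions.

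First I would verify the disjoint union $\cL(\cP \ominus \{(x,y)\}) = \cL(\cP) \sqcup \cL(\cP_{\{(x,y)\}})$ at the level of bijections $\cP \to [n]$: a linear extension $f$ of $\cP \ominus \{(x,y)\}$ either satisfies $f(x) < f(y)$, in which case $f$ is a linear extension of $\cP$, or it satisfies $f(y) < f(x)$, in which case $f$ is a linear extension of $\cP_{\{(x,y)\}}$. This is precisely the content of the identity \eqref{linext IE decomposition} established in the proof of Lemma~\ref{lemma:inclusionexclusion}.

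Next I would push this disjoint union through the bijection $f \mapsto \omega \circ f^{-1}$ to obtain the corresponding statement at the level of permutations,
$$\cL(\cP \ominus \{(x,y)\},\omega) \,=\, \cL(\cP,\omega) \,\sqcup\, \cL(\cP_{\{(x,y)\}},\omega),$$
and then sum $q^{\stat(\sigma)}$ over each side. Rearranging the resulting identity $e_q^{\stat}(\cP \ominus \{(x,y)\},\omega) = e_q^{\stat}(\cP,\omega) + e_q^{\stat}(\cP_{\{(x,y)\}},\omega)$ will give the claim, provided that the weight attached to a given permutation $\sigma$ is the same on both sides, i.e.\ that $\stat(\sigma)$ depends only on $\sigma \in \mathfrak{S}_n$ itself.

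This last point is the one thing to check, and it is immediate from Definition~\ref{def:invmaj}: both $\maj(\sigma)$ and $\inv(\sigma)$ are defined purely from the one-line notation of $\sigma$, with no reference to a labeled poset. So no substantive obstacle arises; the argument is a direct $q$-transport of Lemma~\ref{lemma:inclusionexclusion}, with $\stat \in \{\maj,\inv\}$ handled uniformly because the proof never interacts with the specifics of the statistic.
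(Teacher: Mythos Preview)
Your proposal is correct and follows essentially the same approach as the paper: extend the set-theoretic identity \eqref{linext IE decomposition} to labeled posets and then take the generating polynomial with respect to $\stat$ on both sides. The paper's proof is more terse, but your added observation that $\stat(\sigma)$ depends only on $\sigma \in \mathfrak{S}_n$ (and not on which labeled poset produced it) is exactly the point that makes the passage from sets to generating functions go through, so your write-up simply makes explicit what the paper leaves implicit.
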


 \begin{proof}
 The equality of sets of linear extensions in \eqref{linext IE decomposition} extends to linear extensions of labeled posets:
 \begin{equation} \label{linext IE decomposition labeled posets}
\cL(\cP,\omega) = \cL(\cP \ominus \{(x,y)\},\omega) \,\setminus\, \cL(\cP_{\{(x,y)\}},\omega).
\end{equation}
By taking the generating polynomial over the sets on both sides with respect to the statistic $\stat \in \{\maj, \inv\}$, the result follows.
\end{proof}
 
 \begin{corollary}\label{cor:alt}
Let $(\cP,\omega)$ be a labeled poset, $F \subset\ \lessdot_\cP$, and $\stat$ be in $\{\maj,\inv\}$.  Then
\begin{equation}
e_q^\stat(\cP,\omega) = \sum_{S\subset F}(-1)^{\#S} e_q^\stat(\cP_{S,F},\omega).
\end{equation}
\end{corollary}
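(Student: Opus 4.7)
The plan is to mirror the proof of the unweighted Corollary~\ref{corollary:alternating}, lifting the single-fold identity of Lemma~\ref{lem:qincexc} to all of $F$ at once. The cleanest route is to argue at the level of \emph{sets} of linear extensions, generalizing the identity~\eqref{linext IE decomposition labeled posets}, and then pull the weighting by $q^{\stat}$ through the resulting signed sum. This last step is legitimate because, for $\stat \in \{\maj, \inv\}$, the value $\stat(\omega \circ f^{-1})$ depends only on the permutation $\omega \circ f^{-1}$ and not on the ambient poset that $f$ extends.

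First I would set up the combinatorial framework. For each $f = (x,y) \in F$, let
\[
A_f \,:=\, \cL\!\left((\cP \ominus F) \oplus \{f^{\op}\},\,\omega\right) \,\subset\, \cL(\cP \ominus F, \omega)
\]
be the set of linear extensions of $\cP \ominus F$ that ``violate'' $f$, i.e.\ place $y$ before $x$. The key set identity to verify, by unwinding the definition $\cP_{S,F} = (\cP \ominus F) \oplus S^{\op}$, is
\[
\cL(\cP_{S,F},\omega) \,=\, \bigcap_{f \in S} A_f \qquad \text{for every } S \subset F,
\]
since a permutation respects $\cP_{S,F}$ exactly when it respects $\cP \ominus F$ and additionally reverses each cover in $S$. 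The case $S = F$ recovers the full-fold identity $\cL(\cP,\omega) = \cL(\cP \ominus F,\omega) \setminus \bigcup_{f \in F} A_f$, which is the $F$-version of \eqref{linext IE decomposition labeled posets}.

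Next, applying the standard inclusion-exclusion principle to the complement of $\bigcup_{f \in F} A_f$ inside $\cL(\cP \ominus F,\omega)$ yields the identity of indicator functions
\[
\mathbf{1}_{\cL(\cP,\omega)} \;=\; \sum_{S \subset F}(-1)^{\#S}\, \mathbf{1}_{\cL(\cP_{S,F},\omega)}
\]
on $\cL(\cP \ominus F,\omega)$. Multiplying both sides by $q^{\stat(\sigma)}$ and summing over $\sigma \in \cL(\cP \ominus F,\omega)$ then produces the desired weighted identity for both $\stat = \maj$ and $\stat = \inv$.

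The principal obstacle I anticipate is not analytic but combinatorial bookkeeping: carefully verifying the set identity $\cL(\cP_{S,F},\omega) = \bigcap_{f \in S} A_f$, which amounts to checking that the transitive closure implicit in forming $\cP_{S,F}$ adds no constraints on linear extensions beyond those inherited from $\cP \ominus F$ and the reversed covers $S^{\op}$. An alternative strategy is to induct on $\#F$ by peeling off one fold at a time with Lemma~\ref{lem:qincexc}, but that approach stumbles on the fact that an element of $F$ may cease to be a cover relation in an intermediate folded poset $\cP_{\{f\}}$; the set-level argument above is insensitive to this subtlety.
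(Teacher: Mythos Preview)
Your proof is correct. The paper's own argument is the single sentence ``The result follows from repeated application of Lemma~\ref{lem:qincexc},'' i.e.\ naive induction on $\#F$ by peeling off one fold at a time. You instead go directly to the set level: you identify $\cL(\cP_{S,F},\omega)=\bigcap_{f\in S}A_f$ inside $\cL(\cP\ominus F,\omega)$, apply ordinary inclusion--exclusion to indicator functions, and then push the weight $q^{\stat(\sigma)}$ through. This is essentially a careful unpacking of what ``repeated application'' ought to mean.

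The difference is not merely cosmetic. The subtlety you flag in your last paragraph is genuine: if one tries to iterate Lemma~\ref{lem:qincexc} literally, the inductive step on the folded branch $\cP_{\{f\}}$ requires the remaining elements of $F$ to be \emph{cover} relations there, and they need not be. (Concretely, take the ``butterfly'' with $a\lessdot b$, $a\lessdot c$, $d\lessdot b$, $d\lessdot c$ and $F=\{(a,b),(d,c)\}$; after folding $(a,b)$ the poset becomes the chain $d\lessdot b\lessdot a\lessdot c$, so $(d,c)$ is no longer a cover.) Your set-level argument is insensitive to this because the identity $\cL(\cP_{S,F},\omega)=\bigcap_{f\in S}A_f$ only uses that a total order respecting $\cP\ominus F$ and each reversed relation in $S^{\op}$ automatically respects their transitive closure; cover-ness plays no role. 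So what your route buys is a clean proof that does not need the bridge assumption of Section~\ref{section:detie} to even make sense, whereas the paper's terse proof implicitly relies on the reader interpreting ``repeated application'' at the level of~\eqref{linext IE decomposition labeled posets} rather than of Lemma~\ref{lem:qincexc} as stated.
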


\begin{proof}
The result follows from repeated application of Lemma~\ref{lem:qincexc}.
\end{proof}

Given a labeled poset $(\cP,\omega)$ and a set of folds $F$, component arrays are defined for labeled posets analogously and are denoted by $M_\sigma(\cP_F,\omega)$, where $\sigma$ is any total order on the vertices of $C(\cP_F)$.  Given a component array $M_\sigma(\cP_F,\omega)$ with $\sigma$ a path order on the vertices of $C(\cP_F)$, we define a matrix $\overline{e}_q^\stat(M_{\sigma}(\cP_F,\omega))$ by
\begin{equation}\label{qcomparray}
\overline{e}_q^\stat(M_{\sigma}(\cP_F,\omega))_{i,j} := \left\{\begin{array}{ll}0 & \text{if } j<i-1 ,\\
1 & \text{if } j = i-1,\\
\overline{e}_q^\stat((M_\sigma(\cP_F,\omega))_{i,j}) & \text{otherwise}, \end{array}\right.
\end{equation}
where $(i,j)\in [0,k]\times [0,k]$.

\subsection{\texorpdfstring{$q$}{q}-analogue by major index for mobile posets}
The main result of this section is Theorem~\ref{thm:qmajgmobiledetdone}, which gives a major index $q$-analogue of Theorem~\ref{thm:gmobiledetdone}.  Note that this also gives a major index $q$-analogue of Corollary~\ref{thm:mobiledetdone}, since mobile tree posets are a special case of mobile posets. 

\begin{lemma}\label{thm:qdet-maj}
Let $(\cP,\omega)$ be a labeled poset with $n$ elements, and let $\sigma$ be a path order on the vertices of $C(\cP_F)$. Then we have
\[
e_q^\maj(\cP,\omega) = [n]_q!\cdot\det(\overline{e}_q^{\,\maj}(M_{\sigma}(\cP_F,\omega))).
\]
\end{lemma}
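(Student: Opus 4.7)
The plan is to mirror the proof of Lemma~\ref{thm:det} almost verbatim, substituting the classical tools with their major index $q$-analogues. Namely, I will replace the inclusion-exclusion formula of Corollary~\ref{corollary:alternating} with its $q$-analogue (Corollary~\ref{cor:alt} with $\stat=\maj$), and the disjoint-sum formula of Proposition~\ref{prop:disjointsum} with its $q$-analogue (Proposition~\ref{prop:qdjsum-maj}). Stanley's determinant identity (Lemma~\ref{lemma:det}) does the final step of packaging the alternating sum as a determinant.

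Concretely, encode the path order $\sigma$ by the function $\phi\co F\to [k]$, $\sigma_i\mapsto i+1$, exactly as in the proof of Lemma~\ref{thm:det}. Applying Corollary~\ref{cor:alt} with $\stat=\maj$ yields
\[
e_q^\maj(\cP,\omega) \,=\, \sum_{S\subset F} (-1)^{\#S}\, e_q^\maj(\cP_{S,F},\omega).
\]
Because $F$ consists of bridges and $\sigma$ is a path order, $\cP_{S,F}$ decomposes, as a labeled poset, into the disjoint sum
\[
\cP_{S,F} \,=\, C(\cP_F)[0,i_1-1] + C(\cP_F)[i_1,i_2-1] + \cdots + C(\cP_F)[i_j,k],
\]
where $\{i_1<\cdots<i_j\}=\phi(F\setminus S)$, and each summand inherits its labeling by restriction of $\omega$, as in \eqref{eqn:djsum}.

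Next, I apply Proposition~\ref{prop:qdjsum-maj} to this labeled disjoint sum; crucially, the major index $q$-analogue of the disjoint-sum formula places no compatibility hypothesis on the labels across components, in contrast to Proposition~\ref{prop:qdjsum-inv}, so this step is unconditional. Using that the $q$-multinomial equals $[n]_q!/\prod_\ell [n_\ell]_q!$ and absorbing each $[n_\ell]_q!$ into the corresponding $e_q^\maj$ to form $\overline{e}_q^{\,\maj}$, I obtain
\[
e_q^\maj(\cP_{S,F},\omega) \,=\, [n]_q!\, \prod_{\ell=0}^{j} \overline{e}_q^{\,\maj}\bigl(M_\sigma(\cP_F,\omega)_{i_\ell,\,i_{\ell+1}-1}\bigr),
\]
where $i_0:=0$ and $i_{j+1}:=k+1$. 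Substituting into the alternating sum then gives
\[
e_q^\maj(\cP,\omega) \,=\, [n]_q! \sum_{\{i_1<\cdots<i_j\}\subset [k]} (-1)^{k-j} \prod_{\ell=0}^{j} \overline{e}_q^{\,\maj}\bigl(M_\sigma(\cP_F,\omega)_{i_\ell,\,i_{\ell+1}-1}\bigr).
\]

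Finally, I invoke Lemma~\ref{lemma:det} with the function $g(i,j+1) := \overline{e}_q^{\,\maj}(M_\sigma(\cP_F,\omega)_{i,j})$ for $0\leq i\leq j\leq k$, together with $g(i,i)=1$ and $g(i,j)=0$ for $j<i$; by the definition in \eqref{qcomparray}, these are precisely the entries of $\overline{e}_q^{\,\maj}(M_\sigma(\cP_F,\omega))$. Stanley's lemma then rewrites the alternating sum as $\det(\overline{e}_q^{\,\maj}(M_\sigma(\cP_F,\omega)))$, giving the claim. There is no substantive obstacle here: the proof is a clean lift of the $q=1$ argument, and the one conceptual point to verify is that the major index version of the disjoint-sum formula applies to arbitrary labelings, which is exactly the content of Proposition~\ref{prop:qdjsum-maj}. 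This is also precisely why the argument generalizes to mobile posets for $\stat=\maj$ but not for $\stat=\inv$, where the label-ordering hypothesis of Proposition~\ref{prop:qdjsum-inv} would need to be checked for each $\cP_{S,F}$.
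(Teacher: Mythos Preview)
Your proof is correct and follows exactly the approach of the paper: apply Corollary~\ref{cor:alt} with $\stat=\maj$, use Proposition~\ref{prop:qdjsum-maj} on each summand, and then package the alternating sum via Lemma~\ref{lemma:det}. Your observation that Proposition~\ref{prop:qdjsum-maj} imposes no compatibility condition on the labeling (in contrast to the inversion case) is precisely the point that makes the argument go through unchanged.
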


\begin{proof}
The proof is analogous to the proof of Lemma~\ref{thm:det}.  That is, we apply Proposition~\ref{prop:qdjsum-maj} to each term of the alternating sum in Corollary~\ref{cor:alt}.  Then we use Lemma~\ref{lemma:det} with $g(i,j+1) = \overline{e}_q^{\,\maj}((M_\sigma(\cP_F,\omega))_{i,j})$ for $0\leq i \leq j \leq k$, $g(i,i)=1$, and $g(i,j)=0$ for $j<i$.
\end{proof}
 
We now present our $q$-analogue of Theorem \ref{thm:gmobiledetdone}.

\begin{theorem}\label{thm:qmajgmobiledetdone}
Let $(\cP,\omega)$ be a labeled mobile poset with $n$ elements and $F$ be the set of path folds for $\cP$ with induced path order $\sigma$.  Then
\begin{equation}
e_q^\maj(\cP, \omega) \,=\, [n]_q!\cdot\det(M_{i,j})_{0 \leq i,j\leq k}, \quad \text{for} \quad 
M_{i,j} := \begin{cases}0 & \text{if } j<i-1,\\
1 & \text{if } j = i-1,\\
\frac{q^{\maj(\cP_{i,j},\omega_{i,j})}}{\prod_{x \in \cP_{i,j}} [h_{\cP_{i,j}}(x)]_q} & \text{otherwise},
\end{cases}
\end{equation}
where $k$ is the size of $F$ and $(\cP_{i,j},\omega_{i,j})$ is the labeled connected $d$-complete poset $(M_\sigma(\cP_F,\,\omega))_{i,j}$.
\end{theorem}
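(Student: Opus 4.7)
The plan is to mirror the proof of Theorem~\ref{thm:gmobiledetdone} almost verbatim, swapping in the appropriate $q$-analogues of the two main ingredients: Lemma~\ref{thm:det} is replaced by its major-index $q$-analogue Lemma~\ref{thm:qdet-maj}, and the hook-length formula of Theorem~\ref{thm:dhook} is replaced by the Peterson--Proctor $q$-hook formula in Theorem~\ref{lemma:qhookmaj d complete}. All the structural work on mobile posets (the fact that the set of path folds produces a path-shaped component tree, and that each entry of the component array is a connected $d$-complete poset) has already been done in Section~\ref{section:mobile}, so it transfers to this setting without change.

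More concretely, I would first observe that since $\sigma$ is the induced path order on the set of path folds $F$ of the mobile poset $\cP$, Lemma~\ref{lemma: generalized mobiles have path folding} guarantees that the component tree $C(\cP_F)$ is a path, and therefore $\sigma$ is a genuine path order on its vertices in the sense of Definition~\ref{def:pathorder}. This is exactly the hypothesis needed to apply Lemma~\ref{thm:qdet-maj}, which yields
\[
e_q^\maj(\cP,\omega) \,=\, [n]_q!\cdot\det\bigl(\overline{e}_q^{\,\maj}(M_\sigma(\cP_F,\omega))\bigr).
\]

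Next, for each pair $0 \le i \le j \le k$, Lemma~\ref{cor:path_ordering_gives_d_complete_posets} tells us that the poset $\cP_{i,j} := (M_\sigma(\cP_F))_{i,j}$ is a connected $d$-complete poset, and the induced labeling $\omega_{i,j}$ (restriction of $\omega$ to the underlying set of $\cP_{i,j}$) makes $(\cP_{i,j},\omega_{i,j})$ into a labeled $d$-complete poset. Applying Theorem~\ref{lemma:qhookmaj d complete} to each such entry gives
\[
e_q^\maj(\cP_{i,j},\omega_{i,j}) \,=\, q^{\maj(\cP_{i,j},\omega_{i,j})} \cdot \frac{[\#\cP_{i,j}]_q!}{\prod_{x\in \cP_{i,j}}[h_{\cP_{i,j}}(x)]_q},
\]
and dividing by $[\#\cP_{i,j}]_q!$ according to the normalization in \eqref{qcomparray} recovers exactly the upper-triangular entries $M_{i,j}$ in the claimed formula. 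The subdiagonal entries $1$ and the entries $0$ below the subdiagonal are dictated by the definition of $\overline{e}_q^{\,\maj}(M_\sigma(\cP_F,\omega))$ itself.

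I do not anticipate a serious obstacle, since the key inclusion-exclusion and the $d$-completeness of the component-array entries were already established for the $q=1$ case. The only mild subtlety is bookkeeping: one must check that the labeling used inside the proof of Lemma~\ref{thm:qdet-maj} (via Proposition~\ref{prop:qdjsum-maj}) agrees with the restriction $\omega_{i,j}$ appearing in the final statement. This is where the choice of the major index is convenient, because Proposition~\ref{prop:qdjsum-maj} requires no hypothesis on the relative sizes of labels across the summands (contrast this with Proposition~\ref{prop:qdjsum-inv}, which is precisely the reason a full inversion analogue for mobile posets does not fall out, as flagged in Remark~\ref{rem:noinv}). With this check, the determinantal identity follows directly.
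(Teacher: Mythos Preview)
Your proposal is correct and follows essentially the same route as the paper's own proof: apply Lemma~\ref{thm:qdet-maj} (using that $\sigma$ is a path order), then invoke Lemma~\ref{cor:path_ordering_gives_d_complete_posets} and Theorem~\ref{lemma:qhookmaj d complete} to evaluate each upper-triangular entry. Your write-up is in fact slightly more explicit than the paper's about the labeling bookkeeping and the role of Proposition~\ref{prop:qdjsum-maj}, but the logical structure is identical.
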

\begin{proof}
Since $\sigma$ is a path order, we may apply Lemma~\ref{thm:qdet-maj} to get
\[
e_q^{\maj}(\cP,\omega) = [n]_q! \cdot \det(\overline{e}_q^{\,\maj}(M_{\sigma}(\cP_F,\,\omega))).
\]
Each poset $\cP_{i,j} := (M_\sigma(\cP_F))_{i,j} = C(\cP_F)[i,j]$ for $i \leq j$ is a connected $d$-complete poset (see Lemma~\ref{cor:path_ordering_gives_d_complete_posets}), so $\overline{e}_q^{\maj}(\cP_{i,j},\omega_{i,j}) = q^{\maj(\cP_{i,j}, \omega_{i,j})}/\prod_{x \in \cP_{i,j}} [h_{\cP_{i,j}}(x)]_q$ via Theorem~\ref{lemma:qhookmaj d complete}.
\end{proof}

We present two examples of using Theorem~\ref{thm:qmajgmobiledetdone} to compute $e_q^\maj(\cP,\omega)$ for a mobile tree poset and a mobile poset, respectively.

\begin{example}\label{ex:qmaj}
Let $\cP$ and $M_{\sigma}(\cP_F)$ be the same poset and component array from Example \ref{ex:mobile} (see Figure~\ref{fig: component tree and array2}) with labeling $\omega$ on $\cP$ given by
\[
a\mapsto 1,\quad b \mapsto 3,\quad c \mapsto 6,\quad d\mapsto 4,\quad e \mapsto 2,\quad f \mapsto 5.
\]
Applying Theorem~\ref{thm:qmajgmobiledetdone} to this labeled mobile tree poset $(\cP,\omega)$ yields the determinantal formula
\[
 e^{\maj}_q(\cP,\omega) = [6]_q! \cdot \det 
    \begin{pmatrix}
        \frac{1}{[1]_q} & \frac{q^5}{[5]_q[4]_q} & \frac{q^7}{[6]_q[5]_q} \\[6pt]
        1 & \frac{q^4}{[4]_q[3]_q} & \frac{q^6}{[5]_q[4]_q} \\[6pt]
        0 & 1 & \frac{1}{[1]_q}
    \end{pmatrix}
     = q^{11}+3q^{10}+3q^9+q^8+q^6+2q^5+q^4.
\]
\renewcommand{\qedsymbol}{\rule{0.65em}{0.65em}}\hfill\qedsymbol
 \end{example}

\begin{example}\label{ex:qmaj2}
Let $\cP$ and $M_{\sigma}(\cP_F)$ be the same poset and component array from Example \ref{ex:genmobile} (see Figure~\ref{fig: component tree and array gen mobile}) with labeling $\omega$ on $\cP$ given by 
\[
a \mapsto 10,\quad b \mapsto 7,\quad c \mapsto 8,\quad d \mapsto 9,\quad e \mapsto 5,\quad f \mapsto 6,\quad g \mapsto 4,\quad h \mapsto 2,\quad i \mapsto 3,\quad j \mapsto 1.
\]
Applying Theorem~\ref{thm:qmajgmobiledetdone} to this labeled mobile poset $(\cP,\omega)$ yields the determinantal formula
\[
 e^{\maj}_q(\cP,\omega) = [10]_q! \cdot \det 
    \begin{pmatrix}
     \frac{1}{[1]} & \frac{q}{[9][8][5][3][2][2][2]} & \frac{q^2}{[10][9][6][3][2][2][2]} \\[6pt]
        1 & \frac{1}{[8][7][5][3][2][2]} & \frac{q}{[9][8][6][3][2][2]} \\[6pt]
        0 & 1 & \frac{1}{[1]}
    \end{pmatrix},
\]
where we have suppressed $q$ from the notation $[m]_q$ in the matrix entries for space concerns.  The powers of $q$ in the numerators of the entries of the matrix are due to the two possible descents in $\Des(\cP_F, \omega)$: $b \lessdot e$ and $d \lessdot f$.\renewcommand{\qedsymbol}{\rule{0.65em}{0.65em}}\hfill\qedsymbol
 \end{example}

\subsection{\texorpdfstring{$q$}{q}-analogue for inversions of mobile tree posets}

The main result of this section is Theorem~\ref{thm:qmobiledetdone}, which gives an inversion $q$-analogue of Corollary~\ref{thm:mobiledetdone}. In contrast with the major index $q$-analogue in Theorem~\ref{thm:qmajgmobiledetdone}, the inversion $q$-analogue is more delicate and requires a special labeling of the poset.

\begin{definition}\label{def:prl}
Let $\cP$ be a mobile tree poset, and let $F$ be the set of path folds for $\cP$ with induced path order $\sigma$.  Then $\sigma$ gives an order $\cP_{\sigma_0}, \cP_{\sigma_1}, \ldots, \cP_{\sigma_k}$ on the connected components of the poset $\cP \ominus F$.  A labeling $\omega$ on $\cP$ is called a \emph{$\sigma$-partitioned labeling} if whenever $\sigma_i < \sigma_j$, we have
\[
\omega(x) < \omega(y) \qquad \text{for every} \qquad  x \in \cP_{\sigma_i}, \,\,y \in \cP_{\sigma_j}.
\]
Moreover, $\omega$ is called a \emph{$\sigma$-partitioned regular labeling} if it is a $\sigma$-partitioned labeling such that the restriction of $\omega$ to each connected component $\cP_{\sigma_i}$ of $\cP \ominus F$ is a regular labeling of that component.
\end{definition}

\begin{proposition}\label{prop:partreglab}
Let $\cP$ be a mobile tree poset and $F$ be the set of path folds for $\cP$ with induced path order $\sigma$.  Then there exists a $\sigma$-partitioned regular labeling of $\cP$.
\end{proposition}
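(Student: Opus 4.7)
The plan is to reduce the problem to finding regular labelings on each connected component of $\cP \ominus F$ and then to assemble these into a $\sigma$-partitioned labeling of $\cP$. The main structural input is that each such component is itself a rooted tree poset.

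First, I would verify the key structural claim that each component $\cP_{\sigma_i}$ of $\cP \ominus F$ is a rooted tree poset. By Theorem~\ref{cor:mobile characterization}, our choice of path folds makes $\cP_F$ a rooted tree, and $\cP \ominus F$ is obtained from $\cP_F$ by further deleting the (bridge) edges of $F^{\op}$; since removing a bridge from a rooted tree poset splits it into two rooted tree posets, iterating shows that $\cP \ominus F$ is a disjoint union of rooted tree posets. Moreover, because the cover relations removed when forming $\cP \ominus F$ lie in $F$ and only bridge distinct components, the induced subposet structure of each $\cP_{\sigma_i}$ agrees with the corresponding connected piece of $\cP_F$, so $\cP_{\sigma_i}$ is genuinely a rooted tree poset. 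This structural verification is the main obstacle.

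Second, on each rooted tree $\cP_{\sigma_i}$ I would exhibit an explicit regular labeling $\omega_i \co \cP_{\sigma_i} \to [\#\cP_{\sigma_i}]$. A postorder labeling---in which the descendants of each vertex receive a contiguous block of labels ending at the label of that vertex---is regular: if $x <_{\cP_{\sigma_i}} z$ and $\omega_i(x) < \omega_i(y) < \omega_i(z)$, then because the labels of the subtree rooted at $z$ form a contiguous interval ending at $\omega_i(z)$ and containing $\omega_i(x)$, the value $\omega_i(y)$ must also lie in this interval, forcing $y <_{\cP_{\sigma_i}} z$; the opposite inequality case of the regularity condition is vacuous since $\omega_i$ is a natural labeling. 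Alternatively, one could simply invoke the existence of regular labelings on rooted trees used by Bj\"orner--Wachs in \cite{q_hook}.

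Finally, setting $N_i := \#\cP_{\sigma_0} + \cdots + \#\cP_{\sigma_{i-1}}$ and defining $\omega(x) := \omega_i(x) + N_i$ for $x \in \cP_{\sigma_i}$ yields a bijection $\omega \co \cP \to [n]$ whose labels on $\cP_{\sigma_i}$ fill the block $(N_i, N_{i+1}]$; these blocks are ordered according to $\sigma$, so $\omega$ is a $\sigma$-partitioned labeling. Since adding a constant to all labels preserves their relative order, the restriction of $\omega$ to each $\cP_{\sigma_i}$ inherits regularity from $\omega_i$, giving the desired $\sigma$-partitioned regular labeling.
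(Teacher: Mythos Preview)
Your proposal is correct and follows essentially the same approach as the paper: both arguments first observe that the connected components $\cP_{\sigma_i}$ of $\cP \ominus F$ are rooted tree posets, then equip each with a regular labeling and shift the label blocks according to $\sigma$. The only cosmetic difference is that the paper obtains the regular labelings on rooted trees by citing that rooted trees are two-dimensional posets (hence admit regular labelings by \cite[Theorem~6.9]{reglab}), whereas you give an explicit postorder construction; both are valid and neither changes the structure of the proof.
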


\begin{proof} Lemma~\ref{cor:path_ordering_gives_d_complete_posets}, together with the first paragraph of the proof of Theorem~\ref{cor:mobile characterization}, asserts that the posets $C(\cP_F)[i,i]$ for $i=0,\ldots,k$ (that is, the connected components of $\cP \ominus F$) are rooted trees.  Assume that the $i$th rooted tree has $n_i$ elements for $i=0,\ldots,k$.  Since rooted trees are two-dimensional posets, they admit a regular labeling \cite[Theorem 6.9]{reglab}, so we give each of the rooted tree posets $C(\cP_F)[0,0],\ldots, C(\cP_F)[k,k]$ a regular labeling with the labels
 \[
 [n_0],\,\, [n_0+n_1]\,\setminus\, [n_0],\,\,\ldots,\,\,[n_0+\cdots + n_k] \,\setminus\, [n_0+\cdots + n_{k-1}], 
 \]
 respectively. This yields a $\sigma$-partitioned regular labeling of $\cP$.
\end{proof}

\begin{example}
Let $\cP$ be the mobile tree poset illustrated below, with set of path folds $F$ colored red and labeling $\omega$ indicated.  To the right of the poset, we depict the induced path order $\sigma$ on the vertices of $C(\cP_F)$. 

\begin{center}
    \raisebox{8pt}{\includegraphics[scale=0.8]{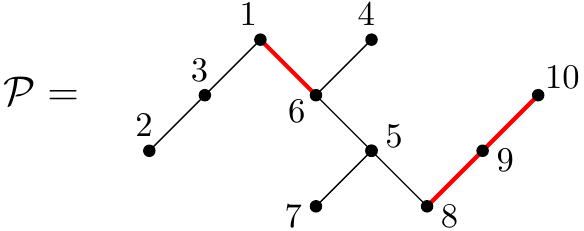}}
    \qquad 
    \includegraphics[scale=0.8]{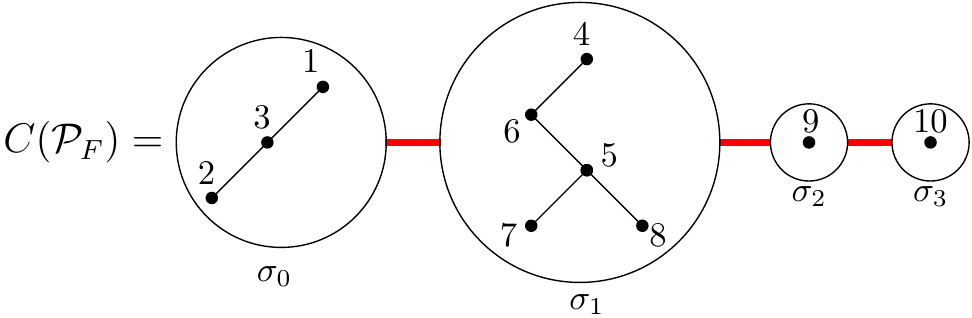}
\end{center}
Note that $\omega$ is a $\sigma$-partitioned regular labeling of $\cP$.\renewcommand{\qedsymbol}{\rule{0.65em}{0.65em}}\hfill\qedsymbol
\end{example}

\begin{lemma}\label{thm:qdet-inv}
Let $(\cP,\omega)$ be a labeled mobile tree poset with $n$ elements, $F$ be the set of path folds for $\cP$ with induced path order $\sigma$, and $\omega$ be a $\sigma$-partitioned labeling of $\cP$. 
Then we have
\[
e_q^\inv(\cP,\omega) = [n]_q!\cdot\det(\overline{e}_q^{\,\inv}(M_{\sigma}(\cP_F,\omega))).
\]
\end{lemma}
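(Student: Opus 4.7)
The plan is to mirror the proofs of Lemma~\ref{thm:det} and Lemma~\ref{thm:qdet-maj}, with the key adjustment being that the $\sigma$-partitioned hypothesis on $\omega$ is precisely what is needed to invoke the inversion disjoint-sum identity (Proposition~\ref{prop:qdjsum-inv}) at every step.

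First, I would apply the inclusion-exclusion formula in Corollary~\ref{cor:alt} with $\stat = \inv$ to write
\[
e_q^\inv(\cP,\omega) = \sum_{S \subset F} (-1)^{\#S}\, e_q^\inv(\cP_{S,F}, \omega).
\]
As in the proof of Lemma~\ref{thm:det}, encode $\sigma$ by $\phi \co F\to [k]$, $\sigma_i \mapsto i+1$. Since $C(\cP_F)$ is a path (Lemma~\ref{lemma: generalized mobiles have path folding}) and $F$ consists of bridges, each partial fold decomposes as a disjoint sum
\[
\cP_{S,F} = C(\cP_F)[0,i_1-1] + C(\cP_F)[i_1,i_2-1] + \cdots + C(\cP_F)[i_j,k],
\]
where $\{i_1<i_2<\cdots<i_j\}_{<} = \phi(F\setminus S)$.

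The crucial step is checking that Proposition~\ref{prop:qdjsum-inv} applies to each such disjoint sum. By Definition~\ref{def: component array}, each summand $C(\cP_F)[a,b]$ is the subposet of $\cP_F$ on the union of consecutive components $\cP_{\sigma_a},\ldots,\cP_{\sigma_b}$ of $\cP \ominus F$. By Definition~\ref{def:prl}, every label in $\cP_{\sigma_\ell}$ is smaller than every label in $\cP_{\sigma_{\ell'}}$ whenever $\ell<\ell'$; hence the label sets of the summands in the decomposition above form consecutive blocks of $[n]$ in $\sigma$-order, which is exactly the hypothesis of Proposition~\ref{prop:qdjsum-inv}. Applying that proposition gives
\[
e_q^\inv(\cP_{S,F},\omega) = [n]_q!\cdot \overline{e}_q^\inv(C(\cP_F)[0,i_1-1],\omega)\cdots \overline{e}_q^\inv(C(\cP_F)[i_j,k],\omega).
\]

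Finally, I would substitute into the inclusion-exclusion, factor out $[n]_q!$, and apply Stanley's Lemma~\ref{lemma:det} with $g(i,j+1) := \overline{e}_q^\inv(M_\sigma(\cP_F,\omega)_{i,j})$ for $0\le i\le j\le k$, $g(i,i):=1$, and $g(i,j):=0$ for $j<i$, recognizing the alternating sum as $\det(\overline{e}_q^\inv(M_{\sigma}(\cP_F,\omega)))$ via \eqref{qcomparray}. The main obstacle, which is really only a bookkeeping check, is verifying that the hypothesis of Proposition~\ref{prop:qdjsum-inv} is satisfied for every $S\subset F$ simultaneously; the $\sigma$-partitioned assumption on $\omega$ is crafted precisely to ensure this, and this is the one place where the argument departs from the major-index case (Lemma~\ref{thm:qdet-maj}), which imposes no condition on the labeling.
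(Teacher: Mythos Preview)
Your proposal is correct and follows essentially the same approach as the paper's proof: both apply Corollary~\ref{cor:alt}, verify that the $\sigma$-partitioned hypothesis guarantees the label-ordering condition of Proposition~\ref{prop:qdjsum-inv} on each disjoint-sum decomposition (the paper phrases this as the summands being ``coarsenings'' of the components $\cP_{\sigma_0},\ldots,\cP_{\sigma_k}$), and then conclude via Lemma~\ref{lemma:det}. Your write-up is in fact more explicit about the verification step than the paper's, but the argument is the same.
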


\begin{proof}
The proof is analogous to the proof of Lemma~\ref{thm:det}---the only new concern is that the posets $(\cP_{S,F},\omega)$ should satisfy the hypotheses of Proposition~\ref{prop:qdjsum-inv} with respect to the decomposition in \eqref{eqn:djsum}, but this follows from the fact that the posets appearing in the right-hand side of \eqref{eqn:djsum} are defined by intervals which are coarsenings of the intervals $[0,0], [1,1], \ldots, [k,k]$ that define the connected components of $\cP \ominus F$.  Thus, we may apply Proposition~\ref{prop:qdjsum-inv} to each term of the alternating sum in Corollary~\ref{cor:alt} and conclude the proof by using Lemma~\ref{lemma:det} with $g(i,j+1) = \overline{e}_q^{\,\inv}((M_\sigma(\cP_F,\omega))_{i,j})$ for $0\leq i \leq j \leq k$, $g(i,i)=1$, and $g(i,j)=0$ for $j<i$.
\end{proof}
 
\begin{lemma}\label{lemma:prl}
Let $(\cP,\omega)$ be a labeled mobile tree poset with $n$ elements, $F$ be the set of path folds for $\cP$ with induced path order $\sigma$, and $\omega$ be a $\sigma$-partitioned regular labeling of $\cP$.  Then the restriction of the labeling $\omega$ to each entry of the component array $M_\sigma(\cP_F)$ is a regular labeling.
\end{lemma}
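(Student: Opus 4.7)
The plan is a case analysis on how the triple $x, y, z$ distributes across the components $\cP_{\sigma_i}, \ldots, \cP_{\sigma_j}$ of $\cP \ominus F$. Fix $x, z \in \cP_{i,j}$ with $x <_{\cP_{i,j}} z$ and $y \in \cP_{i,j}$ with $\omega(x) < \omega(y) < \omega(z)$; the reverse inequality $\omega(x) > \omega(y) > \omega(z)$ combines with $x <_{\cP_{i,j}} z$ to force $x, y, z$ all into the same component, since the $\sigma$-partitioned hypothesis gives $\ell_x \geq \ell_y \geq \ell_z$ while the rooted-tree structure of $\cP_F$ (cf.~Theorem~\ref{cor:mobile characterization}) simultaneously forces $\ell_x \leq \ell_z$, and that case thus reduces to a single-component argument. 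Writing $\ell_x, \ell_y, \ell_z$ for the component indices of $x, y, z$ in $\cP \ominus F$, the $\sigma$-partitioned hypothesis yields $\ell_x \leq \ell_y \leq \ell_z$ in the remaining (increasing-label) case.

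When $\ell_x = \ell_y = \ell_z$, the unique monotone chain from $x$ to $z$ inside the rooted tree $\cP_F$ cannot leave the rooted subtree $\cP_{\sigma_{\ell_x}}$, so $x <_{\cP_{\sigma_{\ell_x}}} z$; the regularity of $\omega|_{\cP_{\sigma_{\ell_x}}}$ built into the $\sigma$-partitioned regular labeling hypothesis then produces $x <_{\cP_{\sigma_{\ell_x}}} y$ or $y <_{\cP_{\sigma_{\ell_x}}} z$, which transfers to $\cP_{i,j}$.

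For the multi-component cases, I would use the structure of $\cP_F$ established in the proofs of Lemma~\ref{lemma: generalized mobiles have path folding} and Lemma~\ref{cor:path_ordering_gives_d_complete_posets}: the folded ribbon $\cZ_F$ is a rooted path, each component $\cP_{\sigma_\ell}$ is a rooted subtree of $\cP_F$ whose maximum $\max(\cP_{\sigma_\ell})$ is a spine element, and each $\max(\cP_{\sigma_\ell})$ is covered in $\cP_F$ by the entry spine element of $\cP_{\sigma_{\ell+1}}$. Three observations drive the argument: (i) if $\ell_x < \ell_z$, then $x <_{\cP_F} z$ forces $z$ to be a spine (ribbon) element of $\cP_{\sigma_{\ell_z}}$ lying weakly above its entry element; (ii) if $\ell_y < \ell_z$, the chain $y \leq_{\cP_F} \max(\cP_{\sigma_{\ell_y}}) <_{\cP_F} z$ gives $y <_{\cP_{i,j}} z$; and (iii) if $\ell_x < \ell_y$ and $y$ itself lies on the spine of $\cP_{\sigma_{\ell_y}}$, the upward chain from $x$ in $\cP_F$ traverses the entire spine of $\cP_{\sigma_{\ell_y}}$ and hence passes through $y$, giving $x <_{\cP_{i,j}} y$.

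The main obstacle is the final subcase $\ell_x < \ell_y = \ell_z$ in which $y$ is a hanging-tree element of $\cP_{\sigma_{\ell_z}}$ that is incomparable with $z$ inside $\cP_{\sigma_{\ell_z}}$. Here I would apply the regularity of $\omega|_{\cP_{\sigma_{\ell_z}}}$ to the chain $e <_{\cP_{\sigma_{\ell_z}}} z$, where $e$ denotes the entry element of $\cP_{\sigma_{\ell_z}}$: the $\sigma$-partitioned inequality $\omega(x) < \omega(e)$, together with $\omega(y) < \omega(z)$ and careful bookkeeping on the placement of $\omega(e)$ inside the label block assigned to $\cP_{\sigma_{\ell_z}}$ by the construction in Proposition~\ref{prop:partreglab}, is designed to pin $\omega(y)$ strictly between $\omega(e)$ and $\omega(z)$. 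Applying regularity of $\omega|_{\cP_{\sigma_{\ell_z}}}$ to $e <_{\cP_{\sigma_{\ell_z}}} z$ with the intermediate-label element $y$ then yields $e <_{\cP_{\sigma_{\ell_z}}} y$ (producing $x <_{\cP_{i,j}} y$ via the chain $x <_{\cP_F} e \leq_{\cP_F} y$) or $y <_{\cP_{\sigma_{\ell_z}}} z$ (producing $y <_{\cP_{i,j}} z$ directly), completing the analysis.
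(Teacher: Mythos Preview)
There is a genuine gap in your final subcase $\ell_x < \ell_y = \ell_z$ with $y$ a hanging-tree element of $\cP_{\sigma_{\ell_z}}$ incomparable to $z$. Your plan is to pin $\omega(y)$ strictly between $\omega(e)$ and $\omega(z)$ and then invoke regularity of $\omega|_{\cP_{\sigma_{\ell_z}}}$ on the chain $e <_{\cP_{\sigma_{\ell_z}}} z$. But the lemma is stated for an \emph{arbitrary} $\sigma$-partitioned regular labeling, so appealing to ``the construction in Proposition~\ref{prop:partreglab}'' is illegitimate; and in fact the inequality $\omega(e) < \omega(y)$ you need is \emph{excluded} by the hypotheses in exactly this configuration. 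Indeed, if $\omega(e) < \omega(y) < \omega(z)$ held, then regularity of $\omega|_{\cP_{\sigma_{\ell_z}}}$ applied to $e < z$ with intermediate element $y$ would already force $e <_{\cP_{\sigma_{\ell_z}}} y$ or $y <_{\cP_{\sigma_{\ell_z}}} z$; both fail, because $y$ hangs from a spine element strictly above $z$ and is therefore incomparable to every spine element weakly below $z$, including $e$. Hence necessarily $\omega(y) < \omega(e)$, and your bookkeeping cannot close the case.

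Concretely: take the free-standing mobile on the ribbon $z_1 \lessdot z_2 \gtrdot z_3 \lessdot z_4 \lessdot z_5$ with a single leaf $y \lessdot z_5$. Then $F = \{(z_3,z_2)\}$, the components are $\cP_{\sigma_0}=\{z_1,z_2\}$ and $\cP_{\sigma_1}=\{z_3,z_4,z_5,y\}$, and $\cP_{0,1}=\cP_F$ is the rooted tree $z_1 < z_2 < z_3 < z_4 < z_5 > y$. The labeling $\omega(z_1)=1$, $\omega(z_2)=2$, $\omega(y)=3$, $\omega(z_5)=4$, $\omega(z_3)=5$, $\omega(z_4)=6$ is $\sigma$-partitioned, and one checks directly (or via the interval-of-labels criterion for forests) that its restriction to each component is regular. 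Yet on $\cP_F$ the triple $(x,y,z)=(z_2,y,z_4)$ satisfies $x < z$ and $\omega(x)<\omega(y)<\omega(z)$ while $y$ is incomparable to both $x$ and $z$; so the obstruction in your subcase is intrinsic, not an artifact of your method, and the paper's terse cross-component argument glosses over the same point. A separate, smaller issue: your claim that $x <_{\cP_{i,j}} z$ forces $\ell_x \le \ell_z$ via ``the rooted-tree structure of $\cP_F$'' is false in the non-free-standing case on the side of the anchor where the induced path order runs opposite to height in $\cP_F$, so the decreasing-label case does not collapse to a single component there either.
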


\begin{proof} 
We use the same reduction as in the proof of Lemma~\ref{cor:path_ordering_gives_d_complete_posets}; that is, every poset $\cP_{i,j}:=(M_{\sigma}(\cP_F))_{i,j}$ comes from a subpath in $C(\cP_F)$ and is a smaller mobile tree poset that is actually a rooted tree poset. Also, the restriction of $\omega$ to each $\cP_{i,j}$ is a $(\sigma_i,\sigma_{i+1},\ldots,\sigma_j)$-partitioned regular labeling of the unfolded poset corresponding to $\cP_{i,j}$.  Thus, it suffices to show that $\omega$ is a regular labeling for the poset $\cP_F$.

Since $\omega$ is a $\sigma$-partitioned regular labeling of $\cP$, it follows that the restriction of $\omega$ to each component $C(\cP_F)[i,i]$ is a regular labeling.  Thus, it remains to show that $\omega$ is regular across connected components.  Since every element in $C(\cP_F)[i,i]$ has a smaller label than every element in $C(\cP_F)[i+1,i+1]$, the orientation of the edge in $F$ between the two connected components does not affect the regularity of the labeling. Thus, $\omega$ is a regular labeling of $\cP_F$ as desired.
\end{proof}

\begin{theorem}\label{thm:qmobiledetdone}
Let $(\cP,\omega)$ be a labeled mobile tree poset with $n$ elements, $F$ be the set of path folds for $\cP$ with induced path order $\sigma$, and $\omega$ be a $\sigma$-partitioned regular labeling of $\cP$. Then
\begin{equation}
e_q^{\inv}(\cP, \omega) \,=\, [n]_q!\cdot\det(M_{i,j})_{0 \leq i,j\leq k}, \quad \text{for} \quad 
M_{i,j} := \begin{cases}0 & \text{if } j<i-1,\\
1 & \text{if } j = i-1,\\
\frac{q^{\inv(\cP_{i,j}, \omega_{i,j})}}{\prod_{x \in \cP_{i,j}} [h_{\cP_{i,j}}(x)]_q} & \text{otherwise},
\end{cases}
\end{equation}
where $k$ is the size of $F$ and $(\cP_{i,j},\omega_{i,j})$ is the labeled rooted tree poset $(M_\sigma(\cP_F,\,\omega))_{i,j}$. 
\end{theorem}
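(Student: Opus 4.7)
The plan is to mirror the argument used for Theorem~\ref{thm:qmajgmobiledetdone}, replacing the major-index ingredients with their inversion-statistic counterparts, and checking that the extra hypotheses on the labeling are enough to make everything go through.

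First, since $\omega$ is a $\sigma$-partitioned labeling of $\cP$, I can invoke Lemma~\ref{thm:qdet-inv} directly to obtain
\[
e_q^{\inv}(\cP,\omega) \,=\, [n]_q!\cdot\det\bigl(\overline{e}_q^{\,\inv}(M_{\sigma}(\cP_F,\omega))\bigr).
\]
This converts the problem to evaluating each entry $\overline{e}_q^{\,\inv}(\cP_{i,j},\omega_{i,j})$ for $i\leq j$, where $(\cP_{i,j},\omega_{i,j}) := (M_\sigma(\cP_F,\omega))_{i,j}$.

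Next, I need to show that the inversion hook-length formula of Björner--Wachs (Theorem~\ref{lemma:qhook}, equation~\eqref{eq: inv q-analogue rooted trees}) applies to each entry. For this two things must hold: each $\cP_{i,j}$ must be a rooted tree poset, and the restricted labeling $\omega_{i,j}$ must be regular. The first is exactly the statement of Corollary~\ref{thm:mobiledetdone} (built from the fact that, by the proof of Theorem~\ref{cor:mobile characterization}, each $\cP_{i,j}$ is again a mobile tree poset whose folded version is a rooted tree, and therefore is itself a rooted tree since no further folds are performed inside). The regularity of $\omega_{i,j}$ is exactly the content of Lemma~\ref{lemma:prl}, whose hypotheses are satisfied because $\omega$ is assumed $\sigma$-partitioned regular.

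Applying Theorem~\ref{lemma:qhook} then gives
\[
\overline{e}_q^{\,\inv}(\cP_{i,j},\omega_{i,j}) \,=\, \frac{q^{\inv(\cP_{i,j},\omega_{i,j})}}{\prod_{x\in \cP_{i,j}}[h_{\cP_{i,j}}(x)]_q}
\]
for every $i\leq j$, which when substituted into the determinant yields the claimed formula. I expect no serious obstacle here: all the hard work sits in Lemmas~\ref{thm:qdet-inv} and~\ref{lemma:prl}, which already package the two delicate points (verifying the hypothesis of Proposition~\ref{prop:qdjsum-inv} for each term of the inclusion-exclusion sum, and propagating regularity of the labeling to the component array). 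The only point requiring mild care is confirming the matrix entries below the subdiagonal and on the subdiagonal match those in the statement, but this is immediate from the definition~\eqref{qcomparray} of $\overline{e}_q^{\,\inv}(M_\sigma(\cP_F,\omega))$.
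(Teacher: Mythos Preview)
Your proposal is correct and follows essentially the same approach as the paper's own proof: apply Lemma~\ref{thm:qdet-inv} to get the determinantal expression, then use Lemma~\ref{lemma:prl} together with the fact that each $\cP_{i,j}$ is a rooted tree to invoke the Bj\"orner--Wachs inversion hook-length formula (Theorem~\ref{lemma:qhook}) for the entries. The only cosmetic difference is that the paper bundles the ``rooted tree'' claim into its citation of Lemma~\ref{lemma:prl} (whose proof already observes this), whereas you trace it back through Corollary~\ref{thm:mobiledetdone} and Theorem~\ref{cor:mobile characterization}.
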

\begin{proof}
Lemma~\ref{thm:qdet-inv} assures that
\[
e_q^{\inv}(\cP,\,\omega) \,=\, [n]_q! \cdot \det(\overline{e}_q^{\,\inv}(M_{\sigma}(\cP_F,\,\omega))).
\]
Each poset $\cP_{i,j} := (M_\sigma(\cP_F))_{i,j} = C(\cP_F)[i,j]$ for $i \leq j$ is a rooted tree with a regular labeling thanks to Lemma~\ref{lemma:prl}, so $\overline{e}_q^{\inv}(\cP_{i,j},\omega_{i,j})$ is given by the hook-length formula $q^{\inv(\cP_{i,j}, \omega_{i,j})}/\prod_{x \in \cP_{i,j}} [h_{\cP_{i,j}}(x)]_q$ via the first statement in Theorem~\ref{lemma:qhook}.
\end{proof}

\begin{example}\label{ex:qinv}
Let $(\cP,\omega)$ be the same labeled mobile tree poset from Example~\ref{ex:qmaj} (see Figure~\ref{fig: component tree and array2}), and note that $\omega$ is a $\sigma$-partitioned regular labeling (where $\sigma$ is the same order depicted in Figure~\ref{fig: component tree and array2}: Center).  Applying Theorem~\ref{thm:qmobiledetdone} to $(\cP,\omega)$ yields the determinantal formula 
\[
 e^{\inv}_q(\cP,\omega) = [6]_q! \cdot \det 
    \begin{pmatrix}
        \frac{1}{[1]_q} & \frac{q^3}{[5]_q[4]_q} & \frac{q^5}{[6]_q[5]_q} \\[6pt]
        1 & \frac{q^3}{[4]_q[3]_q} & \frac{q^5}{[5]_q[4]_q} \\[6pt]
        0 & 1 & \frac{1}{[1]_q}
    \end{pmatrix}
     = q^{10} + 3q^9 + 4q^8 + 3q^7 + q^6.
\]\renewcommand{\qedsymbol}{\rule{0.65em}{0.65em}}\hfill\qedsymbol
 \end{example}

\section{Final Remarks}\label{sec:final_rem}

\subsection{Positive formulas for counting linear extensions of mobiles}\label{subsec:pos_formula}

In Section~\ref{section:mobile}, we generalized a determinant formula for linear extensions of ribbon posets $\cZ$ to mobile posets. It is natural to ask if other known formulas for $e(\cZ)$ generalize to mobile posets. For instance, there is a recent formula by Naruse (see \cites{NO,MPP1}) that computes  $e(\cZ)$, and more generally the number of skew standard Young tableaux, as a positive sum of products of hook lengths. In \cite{NO}, this formula was generalized to skew $d$-complete posets. To define these posets, we recall the notion of an {\em order filter} as a subset $I$ of $\cP$ such that if $x \in I$ and $y\geq_{\cP} x$ then $y\in I$.  

\begin{definition}
A {\em skew $d$-complete poset} is a $d$-complete poset $\cP$ with an order filter $I$ removed.  By abuse of notation, we denote such a poset by $\cP / I$.\footnote{In \cite{NO}, skew $d$-complete posets are denoted by $\cP/I$ to mimic skew shapes $\lambda/\mu$. However, with our conventions from Section~\ref{section:posets} the skew $d$-complete poset $\cP / I$ should be written $\cP\backslash I$, but we stick to the notation established in the literature in this section.}
\end{definition}

The Naruse--Okada formula for counting linear extensions of skew $d$-complete posets is stated in terms of excited diagrams: certain subsets $D$ of the set of elements of $\cP$ that can be obtained from the elements of $I$ by a sequence of certain  excited moves that swap a certain element of $D$ by an element of $\cP \setminus D$. See \cite[Section 3]{NO} and \cite{MPP1} for the precise definition of excited diagrams of skew $d$-complete posets and skew Young diagrams, respectively.

\begin{theorem}[Naruse--Okada \cite{NO}] \label{thm: Naruse Okada HLF}
Let $\cP/I$ be a skew $d$-complete poset with $n$ elements. Then 
\[
e(\cP/I) = n! \sum_{D \in \mathcal{E}(\cP/I)} \prod_{x \in \cP\setminus D} \frac{1}{h_{\cP}(x)},
\]
where $\mathcal{E}(\cP/I)$ is the set of excited diagrams of $\cP/I$.
\end{theorem}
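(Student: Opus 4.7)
The plan is to adapt the strategy Morales--Pak--Panova used for Naruse's original hook-length formula for skew Young tableaux \cite{MPP1}, exploiting the structure theory of $d$-complete posets from \cite{ProClass} and Section~\ref{section:d-complete}. The guiding idea is to treat the right-hand side as a ``correction'' to Peterson--Proctor's product formula (Theorem~\ref{thm:dhook}), summed over all admissible displacements of the filter $I$ inside $\cP$.

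First I would pin down the excited diagram framework in a way compatible with the $d$-complete structure: an \emph{excited move} replaces an element $x \in D$ by an element $y \in \cP \setminus D$ covering $x$, subject to local conditions near every $d_k$-interval containing $x$ that force $D$ to continue to ``look like'' $I$ combinatorially. The set $\mathcal{E}(\cP/I)$ is then the orbit of $D_0 = I$ under all legal sequences of such moves. Before attempting the main argument, one checks that $\mathcal{E}(\cP/I)$ is finite and independent of the order in which moves are applied, which should follow from the $d$-complete axioms of Definition~\ref{def:dcomp}.

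Next I would prove the formula by induction on $|I|$. The base case $|I|=0$ is exactly Theorem~\ref{thm:dhook}. For the inductive step, select a minimal element $i_0 \in I$ (viewed as an element of $\cP$), set $I' := I \setminus \{i_0\}$, and let $y_1,\ldots,y_r$ be the elements of $\cP \setminus I$ covered by $i_0$. The combinatorial heart of the argument is the inclusion--exclusion
\begin{equation*}
e(\cP/I) \;=\; e(\cP/I') \;-\; \sum_{j=1}^{r} e(\cP/(I' \cup \{y_j\})),
\end{equation*}
which should hold by a slant-sum decomposition of $\cP$ around $i_0$ (Proposition~\ref{prop: slant sum tree d complete}) together with the observation that linear extensions of $\cP/I'$ are partitioned by which $y_j$, if any, plays the role of the ``missing'' minimum. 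Applying the inductive hypothesis to each term on the right and matching excited diagrams of $\cP/I$ bijectively with the disjoint union of excited diagrams of $\cP/I'$ not activating a move at $i_0$ together with those of each $\cP/(I' \cup \{y_j\})$ should reconstruct the right-hand side.

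The main obstacle I expect is controlling what happens near the necks of $d_k$-intervals. The hook-length recursion $h_\cP(z) = h_\cP(x)+h_\cP(y)-h_\cP(w)$ of Definition~\ref{def:dcomphook} means that an excited move crossing a neck does not simply swap one hook factor for another; delicate cancellations among hook products for different excited diagrams will be required. Overcoming this will likely force a case analysis using Proctor's classification of $d$-complete posets (\cite{ProClass}) to describe all admissible excited moves at each neck. A natural alternative route, once the $q=1$ case is in hand, is to promote the argument to a major-index $q$-analogue via Theorem~\ref{lemma:qhookmaj d complete}; the refined identity often makes such cancellations more transparent and would yield a stronger statement at no extra cost.
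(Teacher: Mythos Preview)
The paper does not prove Theorem~\ref{thm: Naruse Okada HLF}; it is quoted from \cite{NO} without argument and used only to illustrate Example~6.2 and to motivate the question of Naruse-type formulas for mobiles. There is therefore no ``paper's own proof'' to compare your proposal against.

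That said, your proposed induction has a genuine gap. With $i_0$ minimal in $I$ and $y_j\in\cP\setminus I$ covered by $i_0$, the set $I'\cup\{y_j\}=(I\setminus\{i_0\})\cup\{y_j\}$ is \emph{not} an order filter of $\cP$: we have $i_0>_{\cP} y_j$ but $i_0\notin I'\cup\{y_j\}$. Hence the objects $\cP/(I'\cup\{y_j\})$ on the right side of your inclusion--exclusion are not skew $d$-complete posets, and the inductive hypothesis does not apply to them. Even if one repaired the filter issue, the displayed recursion relates $e(\cP/I)$ (on an $n$-element poset) to $e(\cP/I')$ (on an $(n+1)$-element poset) and would need a different justification than ``which $y_j$ plays the role of the missing minimum''; the correct elementary identity here is $e(\cP/I')=\sum_{m}e((\cP/I')\setminus\{m\})$ over maximal elements $m$ of $\cP/I'$, and those maximal elements are not the $y_j$'s. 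The slant-sum Proposition~\ref{prop: slant sum tree d complete} does not help either: it produces larger $d$-complete posets from smaller ones, not a decomposition of $\cP$ around $i_0$.

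For context, the argument in \cite{NO} is of a quite different nature: it goes through equivariant $K$-theory of Kac--Moody flag varieties and Billey-type localization formulas, from which the hook-length expansion over excited diagrams is extracted. A purely combinatorial inclusion--exclusion proof along the lines you sketch is not known for general $d$-complete posets, and the difficulty you anticipate at the necks of $d_k$-intervals is exactly where such an approach would need a new idea.
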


\begin{example}
The poset $\cP_1/I_1 = \mathcal{C}_1(2)$ from Figure~\ref{fig:mobiles and skew d complete}: Left has two excited diagrams $D_1=I_1=\{a\}$ and $D_2=\{e\}$ (see Figure~\ref{fig:excited_diagramsA}).

Their complements have the hook lengths $\{1,1,2,2,3,5\}$ and $\{1,1,2,3,5,6\}$, respectively. In this case, Theorem~\ref{thm: Naruse Okada HLF} gives
\[
e(\mathcal{C}_1(2)) = 6! \left( \frac{1}{2\cdot 2 \cdot 3 \cdot 5} + \frac{1}{2  \cdot 3 \cdot 5 \cdot 6}\right) = 16,
\]
agreeing with the data in \eqref{eq: sequence first family}.\renewcommand{\qedsymbol}{\rule{0.65em}{0.65em}}\hfill\qedsymbol
\end{example}

\begin{figure}
\subfigure{
\includegraphics[scale=0.8]{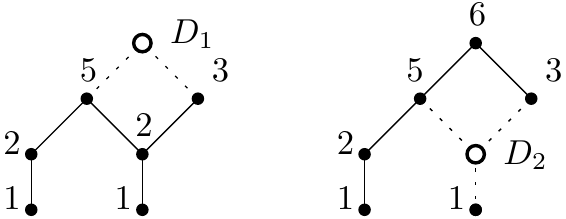} 
\label{fig:excited_diagramsA}
}
\qquad \qquad \qquad
\subfigure{
\includegraphics[scale=0.8]{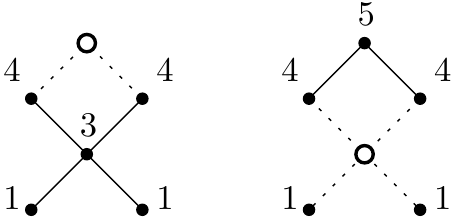} 
\label{fig:excited_diagramsB}
}
\caption{(a) the excited diagrams of $\cP_1/I_1$, (b) possible excited diagrams of $\cP_2/I_2$. The hook lengths are indicated next to the poset elements.}
\end{figure}

The classes of skew $d$-complete posets and mobile (tree) posets are not the same but do have an overlap (see Figure~\ref{fig:mobiles and skew d complete}). It would be interesting to see if skew $d$-complete posets $\cP/I$ also have determinantal identities for the number $e(\cP/I)$ and if mobile (tree) posets $\cQ$ have a Naruse-type formula for $e(\cQ)$. See Example~\ref{ex:NaruseXposet} for small positive evidence of the latter.

\begin{figure}
    \centering
    \includegraphics[scale=0.8]{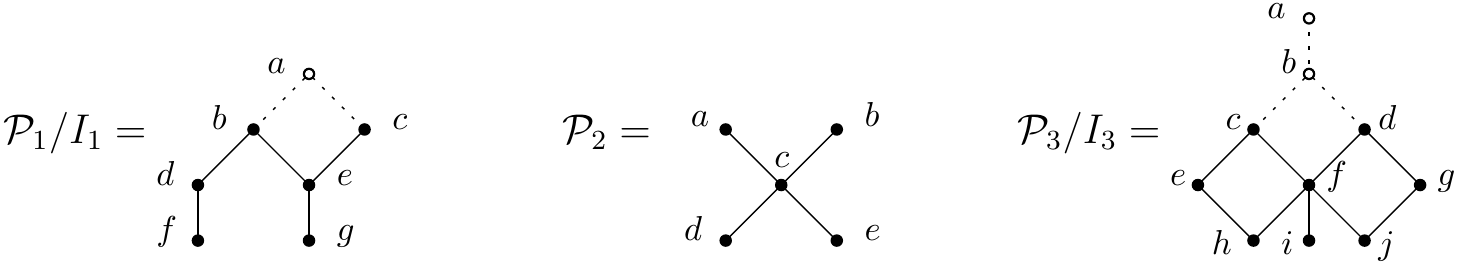}
    \caption{Left: a poset $\cP_1/\{a\}$ that is both skew $d$-complete and a mobile tree poset (isomorphic to $\mathcal{C}_1(2)$ from \eqref{eq: two families of mobiles}). Center: a mobile tree poset $\cP_2$ that is not skew $d$-complete. Right: a skew $d$-complete poset $\cP_3/\{a,b\}$ that is not a mobile.}
    \label{fig:mobiles and skew d complete}
\end{figure}

\begin{example}\label{ex:NaruseXposet}
One can show that the ``$x$ poset'' $\cP_2$ in Figure~\ref{fig:mobiles and skew d complete}, which is a mobile tree poset, is not skew $d$-complete. However, one can write the number $e(\cP_2)=4$ as the Naruse--Okada type sum $5! \left( \frac{1}{3 \cdot 4\cdot 4} + \frac{1}{ 4 \cdot 4 \cdot 5}\right)$ in terms of the two diagrams in Figure~\ref{fig:excited_diagramsB}.
\renewcommand{\qedsymbol}{\rule{0.65em}{0.65em}}\hfill\qedsymbol
\end{example}

\subsection{Extending polynomiality of counting permutations by descents}\label{subsec:desc_poly}

In \cite{CA}, the number of permutations of $[n]$ with a fixed descent set is shown to be a polynomial in $n$. In \cite{diaz2019descent}, the authors studied these polynomials, referring to them as {\em descent polynomials}. In particular, they showed that these polynomials have a nonnegative integer coefficient expansion with respect to a natural basis of the ring of polynomials in $n$ \cite[Theorem 3.3]{diaz2019descent}.\footnote{See \cite{JMcC} for recent results involving bounds on the roots of descent polynomials using the Naruse hook-length formula.}

Descent polynomials can equivalently be regarded as polynomials whose value at $n$ gives the number of linear extensions of a ribbon with $n$ elements. Thinking of descent polynomials in these terms, we present a generalization of descent polynomials for free-standing mobile tree posets. 

Let $\mathcal{P}$ be a mobile tree poset. Let $\mathcal{Z}$ be an underlying ribbon of $\mathcal{P}$ with maximal cardinality, and assume for simplicity that $\cP$ is free-standing with respect to $\cZ$.  We write $\{z_1,\ldots,z_k\}$ for the elements of $\mathcal{Z}$, and we require that $\{z_1,\ldots,z_k\}$ are indexed in such a way that there is a cover relation $z_i \lessdot z_j$ or $z_j \lessdot z_i$ if and only if $|i - j| = 1$. 

Define $\mathcal{P}_{\mathcal{Z},m}$ to be the poset whose elements are $\mathcal{P}\sqcup \{z_{k+1}, \ldots, z_{m}\}$ and with the same cover relations as $\mathcal{P}$, plus the additional cover relations $z_{k} \lessdot z_{k+1} \lessdot \cdots \lessdot z_{m-1} \lessdot z_m$ (see Figure~\ref{fig_des_poly_analogue}). We refer to a cover relation of $\mathcal{P}_{\mathcal{Z},m}$ of the form $z_{i+1} \lessdot z_i$ as a {\em descent}. We let $\text{des}(\mathcal{Z})$ denote the number of descents of $\mathcal{Z}$. Additionally, note that $\mathcal{P}_{\mathcal{Z},m}$ is a free-standing mobile tree poset with respect to the ribbon
\[
(\cZ + (z_{k+1} \lessdot \cdots \lessdot z_m)) \oplus \{(z_k,z_{k+1})\}.
\]
If $\cZ$ has at least one descent, let $i = \max\{j \in [m-1] \mid z_{j+1}\lessdot z_j\}$.  Observe that $\mathcal{P}_{\mathcal{Z},m} \ominus \{(z_{i+1},z_{i})\} = \mathcal{P}^\prime+\mathcal{P}^{\prime\prime}$ where $\mathcal{P}^\prime$ (resp.,  $\mathcal{P}^{\prime\prime}$) is a free-standing mobile tree poset with respect to the ribbon on $\{z_1,\ldots, z_i\}$ (resp., $\{z_{i+1}, \ldots, z_{m}\}$), and this ribbon is a ribbon of maximal cardinality. Note that, by the maximality of $i$, the poset $\mathcal{P}^{\prime\prime}$ is a rooted tree.

\begin{figure}
    \centering
\includegraphics[scale=1]{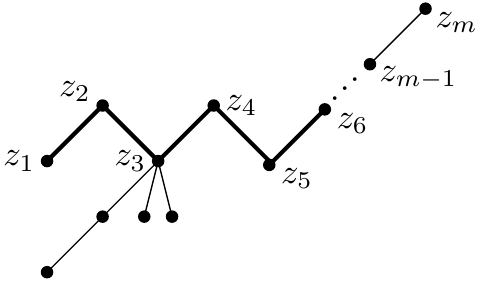}
    \caption{Example of a mobile tree poset $\mathcal{P}_{\mathcal{Z},m}$ where $k = 6$. Here the descents of $\mathcal{Z} = \{z_1,\ldots, z_6\}$ are $z_3\lessdot z_2$ and $z_5 \lessdot z_4.$}
    \label{fig_des_poly_analogue}
\end{figure}

\begin{theorem}
For any $\mathcal{P}_{\mathcal{Z},m}$ as defined above, the number $e(\mathcal{P}_{\mathcal{Z},m})$ is a polynomial in $\mathbb{Q}[N]$, where $N = \#(\mathcal{P}_{\mathcal{Z},m})$. 
If $\mathcal{Z}$ has at least one descent, then the degree of $e(\mathcal{P}_{\mathcal{Z},m})$ is $\#(\mathcal{P}^\prime)$. If $\mathcal{Z}$ has no descents, then the degree of $e(\mathcal{P}_{\mathcal{Z},m})$ is $0$. 
\end{theorem}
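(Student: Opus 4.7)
The plan is to apply Corollary~\ref{thm:mobiledetdone} directly to $\mathcal{P}_{\mathcal{Z},m}$, since the size of the resulting determinant depends only on $\mathrm{des}(\mathcal{Z})$, not on $m$. The theorem then reduces to tracking how the individual matrix entries vary with $m$ (equivalently, with $N$).

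For the trivial case $\mathrm{des}(\mathcal{Z})=0$, the poset $\mathcal{P}_{\mathcal{Z},m}$ is itself a rooted tree, so Theorem~\ref{prop:rootedtree} applies. The hook lengths of elements in $\mathcal{P}$ are unaffected by extending the chain above (since hooks only count elements weakly below), while the hook lengths of the new elements $z_{k+1},\ldots,z_m$ are the consecutive integers $\#\mathcal{P}+1,\#\mathcal{P}+2,\ldots,N$; these telescope against $N!$ to give $e(\mathcal{P}_{\mathcal{Z},m}) = e(\mathcal{P})$, a polynomial in $N$ of degree $0$.

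For the main case $d := \mathrm{des}(\mathcal{Z}) \geq 1$, I would write the descent positions of $\mathcal{Z}$ as $s_1 < \cdots < s_d$ (so $s_d = i$), let $F$ be the set of path folds---one at each descent---and apply Corollary~\ref{thm:mobiledetdone} to the free-standing mobile tree poset $\mathcal{P}_{\mathcal{Z},m}$. This yields a $(d+1)\times(d+1)$ matrix $M$ whose entries above the diagonal are $\overline{e}$ of the subposets $C((\mathcal{P}_{\mathcal{Z},m})_F)[j,j']$ spanning the components $\sigma_j,\ldots,\sigma_{j'}$ of $\mathcal{P}_{\mathcal{Z},m}\ominus F$. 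Only $\sigma_d$ grows with $m$ (it contains the appended chain $z_{s_d+1},\ldots,z_m$), so only the last column of $M$ depends on $m$. A direct hook-length computation---using that extending the chain upward only creates new hook values along the tail---establishes the identity
\[
M_{j,d} \;=\; \frac{A_j!}{K_j\cdot N_j!},
\]
where $N_j := \#C((\mathcal{P}_{\mathcal{Z},m})_F)[j,d] = A_j + (m-k)$ and $A_j, K_j$ are positive constants independent of $m$ (the $m$-dependent hooks form the arithmetic progression $A_j+1,\ldots,N_j$).

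Expanding $\det(M)$ along the last column and multiplying by $N!$ yields
\[
e(\mathcal{P}_{\mathcal{Z},m}) \;=\; \sum_{j=0}^{d} (-1)^{j+d}\,\det(M_{\widehat{j},\widehat{d}})\,\frac{A_j!}{K_j}\,\frac{N!}{N_j!}.
\]
Since $N - N_j = \sum_{r=0}^{j-1}\#\sigma_r$ is a constant in $m$, each factor $N!/N_j! = N(N-1)\cdots(N_j+1)$ is a polynomial in $N$ of degree $\sum_{r=0}^{j-1}\#\sigma_r$; the maximum is achieved uniquely at $j=d$, giving $\deg_N e(\mathcal{P}_{\mathcal{Z},m}) = \sum_{r=0}^{d-1}\#\sigma_r = \#\mathcal{P}'$. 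The corresponding leading coefficient is a positive scalar multiple of $\det(M_{\widehat{d},\widehat{d}})$, which by a second application of Corollary~\ref{thm:mobiledetdone}---this time to $\mathcal{P}'$ itself---equals $e(\mathcal{P}')/\#\mathcal{P}'! > 0$, so the degree is exactly $\#\mathcal{P}'$. The main technical obstacle is the explicit hook-length bookkeeping establishing the formula for $M_{j,d}$: one must unpack the rooted-tree structure of each $C((\mathcal{P}_{\mathcal{Z},m})_F)[j,d]$ and carefully separate the finitely many hook lengths that are constant in $m$ from those along the top chain $z_{k+1},\ldots,z_m$, which vary linearly.
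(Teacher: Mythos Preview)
Your proof is correct, and it takes a genuinely different route from the paper's own argument.

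The paper proceeds by induction on $\mathrm{des}(\mathcal{Z})$: at each step it applies the single-edge inclusion-exclusion (Lemma~\ref{lemma:inclusionexclusion}) at the last descent $(z_{i+1},z_i)$, writes $e(\mathcal{P}_{\mathcal{Z},m}) = \binom{N}{\#\mathcal{P}'}e(\mathcal{P}')e(\mathcal{P}'') - e((\mathcal{P}_{\mathcal{Z},m})_{\{(z_{i+1},z_i)\}})$ via Proposition~\ref{prop:disjointsum}, and then invokes the inductive hypothesis on the folded poset (which has one fewer descent). You instead apply Corollary~\ref{thm:mobiledetdone} once to the whole poset and expand the resulting $(d{+}1)\times(d{+}1)$ determinant along its last column, the only column that depends on $m$. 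Your observation that the $m$-dependent hook lengths in each $C[j,d]$ form the arithmetic progression $A_j+1,\ldots,N_j$, so that $M_{j,d}=A_j!/(K_jN_j!)$ and $N!\,M_{j,d}$ is a polynomial in $N$ of degree $N-N_j=\sum_{r<j}\#\sigma_r$, is exactly right and cleanly isolates the degree. Your identification of the top cofactor $\det(M_{\widehat d,\widehat d})$ with $\overline e(\mathcal{P}')$ via a second application of Corollary~\ref{thm:mobiledetdone} to $\mathcal{P}'$ is also correct, since the first $d$ components $\sigma_0,\ldots,\sigma_{d-1}$ of $\mathcal{P}_{\mathcal{Z},m}\ominus F$ coincide with those of $\mathcal{P}'\ominus F'$.

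What each approach buys: the paper's induction is more elementary (it never needs the full determinant), and it naturally exhibits $e(\mathcal{P}_{\mathcal{Z},m})$ as an integer combination of binomial coefficients $\binom{N}{c}$, which is useful for the positivity questions raised afterward. Your approach is more structural: it shows in one stroke that the polynomial in $N$ is a specific cofactor expansion, and it makes the leading coefficient $e(\mathcal{P}')\cdot e(\mathcal{P}'' \cap \mathcal{P})/\#\mathcal{P}'!$ transparent without an inductive unwinding. In effect your cofactor expansion \emph{is} the induction, performed all at once.
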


\begin{proof}
First, assume that $\mathcal{Z}$ has no descents. It follows that $\mathcal{P}_{\mathcal{Z},m}$ is a rooted tree. This implies that   
\[\begin{array}{rcl}
e(\mathcal{P}_{\mathcal{Z}, m}) & = & \displaystyle \frac{N!}{\prod_{x \in \mathcal{P}_{\mathcal{Z},m}}h_{\cP_{\cZ,m}}(x)}\\
& = & \displaystyle \frac{N!}{(N)(N-1)\cdots ((\#\cP) + 1)\prod_{x \in \mathcal{P}}h_{\cP}(x)}\\
& = & \displaystyle \frac{(\#\mathcal{P})!}{\prod_{x \in \mathcal{P}}h_{\cP}(x)} =  e(\mathcal{P}),
\end{array}\] 
where the second equality follows from the definition of $\mathcal{P}_{\mathcal{Z},m}$.
Therefore, the above expression is a constant polynomial in $\mathbb{Q}[N]$, as desired. 

Now, assume that $\mathcal{Z}$ has at least one descent and that the statement of the theorem holds for all posets $\mathcal{Q}_{\mathcal{Z}^\prime,m^\prime}$, where the number of descents of $\mathcal{Z}^\prime$ is strictly less than the number of descents of $\mathcal{Z}$.  As above, let $i = \max\{j \in [m-1] \mid z_{j+1} \lessdot z_{j}\}.$ By Lemma~\ref{lemma:inclusionexclusion}, we have that 
\begin{align*}
e(\mathcal{P}_{\mathcal{Z},m}) &= e(\mathcal{P}_{\mathcal{Z},m} \ominus \{(z_{i+1},z_{i})\}) - e((\mathcal{P}_{\mathcal{Z},m})_{\{(z_{i+1},z_{i})\}})\\
&= e(\mathcal{P}^\prime+\mathcal{P}^{\prime\prime})  - e((\mathcal{P}_{\mathcal{Z},m})_{\{(z_{i+1},z_{i})\}}).
\end{align*}

Clearly, the maximal ribbons on $\{z_1,\ldots, z_i\}$ and on $\{z_{i+1}, \ldots, z_m\}$ of $\mathcal{P}^\prime$ and $\mathcal{P}^{\prime\prime}$, respectively, have fewer descents than $\mathcal{Z}$. Furthermore, none of the elements of $\{z_1,\ldots, z_i\}$ (resp., $\{z_{i+1},\ldots, z_m\}$) are covered by any element of $\mathcal{P}^\prime\backslash \{z_1,\ldots, z_i\}$ (resp., $\mathcal{P}^{\prime\prime}\backslash \{z_{i+1},\ldots, z_m\}$).
Additionally, by the maximality of $i$, the poset $\mathcal{P}^{\prime\prime}$ is a rooted tree with maximal element $z_{m}$.

Similarly, the poset $(\mathcal{P}_{\mathcal{Z},m})_{\{(z_{i+1},z_{i})\}}$ is a mobile tree poset with respect to the maximal ribbon on the elements $\{z_1,\ldots, z_m\}$, where $z_{i} \lessdot z_{i+1}$. By definition, this maximal ribbon has one fewer descent than $\mathcal{Z}$ does, and there are no elements of this ribbon covered by an element of $(\mathcal{P}_{\mathcal{Z},m})_{\{(z_{i+1},z_{i})\}}\backslash\{z_1,\ldots, z_m\}$.

Applying Proposition~\ref{prop:disjointsum} to the term $e(\cP^\prime + \cP^{\prime\prime})$ in the displayed equation above, we obtain
\begin{align*}
e(\mathcal{P}_{\mathcal{Z},m}) &= \displaystyle \binom{N}{\#(\mathcal{P}^\prime)}e(\mathcal{P}^\prime)e(\mathcal{P}^{\prime\prime}) - e((\mathcal{P}_{\mathcal{Z},m})_{\{(z_{i+1},z_{i})\}})\\
&= \displaystyle \binom{N}{\#(\mathcal{P}^\prime)}e(\mathcal{P}^\prime)\frac{\#(\mathcal{P}^{\prime\prime})}{\prod_{x \in \mathcal{P}^{\prime\prime}}h_{\cP^{\prime\prime}}(x)}  - e((\mathcal{P}_{\mathcal{Z},m})_{\{(z_{i+1},z_{i})\}}),
\end{align*}
where the last equality follows from the fact that $\mathcal{P}^{\prime\prime}$ is a rooted tree. Observe that the factor $e(\mathcal{P}^\prime)$ is independent of $N$. One verifies that $e(\mathcal{P}^{\prime\prime})$ is independent of $N$ using the same calculation that was used in the base case of the induction.  We also have that the binomial coefficient is a polynomial in $N$ of degree $\#(\mathcal{P}^\prime)$. This implies that the first term in this difference is a polynomial in $\mathbb{Q}[N]$ of the desired degree. 

By induction applied to  $e((\mathcal{P}_{\mathcal{Z},m})_{\{(z_{i+1},z_{i})\}})$, we see that the second term in this difference is a polynomial in $\mathbb{Q}[N]$ of degree strictly less than $\#(\mathcal{P}^\prime).$ We obtain that $e(\mathcal{P}_{\mathcal{Z},m})$ is a polynomial in $\mathbb{Q}[N]$ of degree $\#(\mathcal{P}^\prime)$.
\end{proof}

\begin{example}
In the table below, we give examples of the polynomial $e(\mathcal{P}_{\mathcal{Z},m})$ when 
$\mathcal{P}$ is $\mathcal{C}_1(1)$, $\mathcal{C}_1(2)$, and $\mathcal{C}_1(3)$. The posets
$\mathcal{C}_p(n)$ were defined in \eqref{eq: two families of mobiles}.
\end{example}
\begin{table}[!htbp]
\centering
\begin{tabular}{c c c } 
 \hline
 $\mathcal{P}_{\mathcal{Z}, m}$ & polynomial expression for $e(\mathcal{P}_{\mathcal{Z}, m})$  \\ [0.5ex] 
 \hline
$\mathcal({C}_1(1))_{\mathcal{Z},m}$ & 1 \\ 
$\mathcal({C}_1(2))_{\mathcal{Z},m}$ & $\binom{N}{3}-4$  \\
$\mathcal({C}_1(3))_{\mathcal{Z},m}$ & $16\binom{N}{6} - 4\binom{N}{3} + 28$ \\
 [1ex] 
 \hline
\end{tabular}
\end{table}

\begin{remark}
The authors in \cite{diaz2019descent} investigate other questions concerning descent polynomials, including positivity in certain bases and roots of the polynomial (see also the recent work of Jiradilok--McConville \cite{JMcC}). It would be interesting to also study these questions for the polynomial $e(\cP_{\cZ,m})$.
\end{remark}

\subsection{\texorpdfstring{$q$}{q}-analogue of Atkinson's recursive algorithm for linear extensions of any tree poset}
\label{subsec:qatk}

Corollary~\ref{thm:mobiledetdone} gives a determinantal formula to compute the number of linear extensions of mobile tree posets, so this number can be computed in a polynomial number of operations by computing the determinant. This raises the question of whether the number $e(\cP)$ of linear extensions of a tree poset $\cP$ can be computed efficiently. In \cite{atk}, Atkinson gave a recursive quadratic time algorithm to compute $e(\cP)$ for any tree poset $\cP$. This algorithm actually calculates a refinement of $e(\cP)$, which we discuss next.

Let $\cP$ be a poset with $n$ elements.  Given an element $a$ in $\cP$, we define the {\em spectrum of $a$} to be the sequence of nonnegative integers $(\alpha_1,\ldots,\alpha_n)$ where $\alpha_i$ is the number of linear extensions $f$ of $\cP$ with $a$ occurring at position $i$ (that is, with $f^{-1}(i)=a$). Note that $e(\cP) = \sum_{i=1}^n \alpha_i$. Atkinson's algorithm computes, in quadratic time, the spectra of elements of tree posets via a key lemma that shows how to compute the spectra of a poset $\cQ {}^b\backslash_a \cP$ from the spectra of the posets $\cP$ and $\cQ$. In Lemma~\ref{prop:qatk} below, we give an inversion $q$-analogue of this lemma. We were not able to find a major index version of it.

For a labeled $n$-element poset $(\cP,\omega)$ and an element $a$ in $\cP$, we define the {\em $q$-spectrum} of $a$ to be the sequence
$(\alpha_1(q),\ldots,\alpha_n(q))$ of polynomials, where 
\[
\alpha_i(q) := \sum_{\substack{\sigma \in  \mathcal{L}(\cP,\omega)\\\sigma(i) = \omega(a)}} q^{\inv(\sigma)}.
\]
Note that $e^{\inv}_q(\cP) = \sum_{i=1}^n \alpha_i(q)$ and that, for all $i \in [n]$, we have $\alpha_i(1) = \alpha_i$.

\begin{lemma}\label{prop:qatk}
Let $(\cP+\cQ,\omega)$ be a labeled poset where $\cP$ and $\cQ$ have $u$ and $v$ elements, respectively. Suppose that $\omega$ has the property that $\omega(p)<\omega(q)$ for every $p$ in $\cP$ and $q$ in $\cQ$. Let $(\alpha_1(q),\ldots,\alpha_u(q))$ and $(\beta_{1}(q),\ldots,\beta_{v}(q))$ be the $q$-spectra of elements $a$ and $b$ in $\cP$ and $\cQ$, respectively. Then for $r=1,\ldots,u+v$, the $r$th entry of the $q$-spectrum  of $a$ in $(\cQ {}^b\backslash_a \cP,\omega)$  is
\[
\gamma_r(q) = \sum_{i=\max(1,r-v)}^{\min(u,r)}
 \alpha_i(q) q^{(u-i+1)(r-i)} \qbin{r-1}{i-1}{q}\qbin{u+v-r}{u-i}{q} \sum_{j=r-i+1}^v \beta_j(q).
 \]
\end{lemma}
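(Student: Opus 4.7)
The plan is to refine Atkinson's shuffle argument to keep track of inversions. Any linear extension $\sigma\in\cL(\cQ\,{}^b\backslash_a\cP,\omega)$ restricts to linear extensions $\tau_\cP$ of $(\cP,\omega|_{\cP})$ and $\tau_\cQ$ of $(\cQ,\omega|_{\cQ})$ together with a shuffle word $w\in\{0,1\}^{u+v}$ whose zeros mark the positions occupied by $\cP$-elements. Because the only relation added by the slant sum is $(a,b)$, this map is a bijection onto triples $(\tau_\cP,\tau_\cQ,w)$ for which $a$ precedes $b$ in the merged sequence. The labeling hypothesis gives $\omega(\cP)=[1,u]$ and $\omega(\cQ)=[u+1,u+v]$, so every $\cQ$-position sitting before a $\cP$-position contributes an inversion while no other cross-pair does, producing the clean decomposition
\[
\inv(\sigma)=\inv(\tau_\cP)+\inv(\tau_\cQ)+\inv(w),
\]
where $\inv(w)$ counts pairs $s<t$ with $w_s=1$ and $w_t=0$.

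Next I would condition on the slot $i$ holding $a$ in $\tau_\cP$ and the slot $j$ holding $b$ in $\tau_\cQ$. The event $\sigma(r)=\omega(a)$ translates into ``the $i$-th zero of $w$ sits in position $r$,'' while the extra constraint ``$a$ precedes $b$'' translates into ``the $j$-th one of $w$ sits in a position $>r$.'' Since the prefix $w_1\cdots w_{r-1}$ must contain exactly $i-1$ zeros and $r-i$ ones, the second condition reduces to $j\ge r-i+1$, and the feasibility bounds $0\le r-i\le v$ and $1\le i\le u$ carve out the stated summation range for $i$.

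For the shuffle sum I would split $w$ at position $r$ into a prefix of length $r-1$ (with $i-1$ zeros and $r-i$ ones) and a suffix of length $u+v-r$ (with $u-i$ zeros). A direct enumeration gives
\[
\inv(w)=\inv(w_{\mathrm{pre}})+\inv(w_{\mathrm{suf}})+(r-i)(u-i+1),
\]
where the cross-term enumerates pairs of a prefix-one with either the zero at position $r$ or one of the $u-i$ suffix-zeros. Applying the standard identity $\sum q^{\inv(v)}=\qbin{n}{k}{q}$ over $\{0,1\}^n$-words with $k$ zeros to the prefix and suffix yields the shuffle generating function
\[
q^{(u-i+1)(r-i)}\,\qbin{r-1}{i-1}{q}\,\qbin{u+v-r}{u-i}{q}.
\]

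Weighting by $\alpha_i(q)$ for the choice of $\tau_\cP$ with $a$ in slot $i$, by $\sum_{j\ge r-i+1}\beta_j(q)$ for the choice of $\tau_\cQ$ consistent with the constraint, and summing over feasible $i$, then produces the claimed formula. The main obstacle is the careful bookkeeping of the cross-term $(r-i)(u-i+1)$ in the inversion decomposition of $w$ and verifying that the bound $j\ge r-i+1$ precisely encodes the slant sum's extra relation; once those are pinned down, the factorization into $q$-binomials is exactly the shuffle identity underlying Proposition~\ref{prop:qdjsum-inv}.
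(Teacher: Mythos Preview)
Your proposal is correct and follows essentially the same approach as the paper's proof: both decompose a linear extension of the slant sum into the pair of restricted linear extensions together with a shuffle, identify the constraint $j\ge r-i+1$ as equivalent to $a$ preceding $b$, split the cross-inversions into prefix, suffix, and the cross-term $(r-i)(u-i+1)$, and then invoke the standard $q$-binomial shuffle identity. Your packaging via the binary word $w$ is a mild notational variant of the paper's choice of subsets $S_1,S_2$, but the argument is the same.
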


\begin{proof}[Proof]
A linear extension $f$ of $(\cQ {}^b\backslash_a \cP,\omega)$ with $a$ in position $r$ (that is, with $f(r)=\omega(a)$) is obtained by combining a linear extension $g$ of $(\cP,\omega)$ with $a$ in position $i$ for $\max(1,r-v) \leq i \leq \min(u,r)$ and a linear extension $h$ of $(\cQ,\omega)$ with $b$ in some position $j$ with $r-i+1\leq j \leq v$ as follows: (i) choose  $i-1$ positions $S_1$ out of the $r-1$ positions before $a$ in $f$ to insert, in order, the entries of $g$ before $a$; (ii) choose $u-i$ positions $S_2$ out of the $u+v-r$ positions after $a$ in $f$ to insert, in order, the entries of $g$ after $a$; (iii) insert $a$ in position $r$ of $f$; and (iv) insert the entries of $h$ in order in the remaining positions of $f$.

Next, we calculate the inversions of $f$ in terms of the inversions of $g$ and $h$. Each inversion of $g$ and $h$ is an inversion of $f$. It remains to count the inversions coming from pairs consisting of an element of $\cQ$ preceding an element of $\cP$, since the labels of $\cP$ are smaller than those of $\cQ$. There are three kinds of these inversion pairs in $f$: both elements are before $a$, both elements are after $a$, and one of the $r-i$ elements of $\cQ$ before $a$ with one of the $u-i+1$ elements of $\cP$ after and including $a$.  The number of inversions of each of these kinds is $\inv(S_1)$, $\inv(S_2)$, and $(u-i+1)(r-i)$, respectively. (Here, the inversions of a set $S\subset[n]$ are calculated as the inversions of the binary word of length $n$ corresponding to the positions of the set $S$.) Thus, in total, we have 
\[
\inv(f) \,=\, \inv(g) + \inv(h) + \inv(S_1) + \inv(S_2) + (u-i+1)(r-i).
\]
We then consider the contribution to $\gamma_r(q)$ of all the linear extensions $f$ obtained from fixed linear extensions $g$ and $h$. Then, the result follows by using a well-known expansion of $q$-binomial coefficients \cite[Prop. 1.7.1]{EC1}.
\end{proof}

In order to use the $q$-Atkinson algorithm on a tree poset $\cT$, a labeling is needed so that when we iteratively delete certain cover relations to split $\cT$ into two posets $\cP$ and $\cQ$, we have the property that all elements of $\cP$ have smaller labels than those of $\cQ$.  Such labelings exist, and an algorithm provided in \cite{grosser} produces them.

\bibliography{references}
\bibliographystyle{plain}
\end{document}